\newtheorem{lemma}{Lemma}[section]
\newtheorem{theo}[lemma]{Theorem}
\newtheorem{prop}[lemma]{Proposition}
\newtheorem{cor}[lemma]{Corollary}
\newtheorem{conj}[lemma]{Conjecture}
\theoremstyle{definition}
\newtheorem{remark}[lemma]{Remark}
\numberwithin{equation}{section}
\newenvironment{proof_of}[1]{\medskip\noindent{\bf Proof #1}}
{$\Box$ \bigskip}
\newenvironment{eq}{\begin{equation}}{\end{equation}}
\renewcommand{\Ref}[1]{(\ref{#1})}
\newcommand{\Char}{\mathop{\rm char}}
\newcommand{\trdeg}{\mathop{\rm tr.deg}}
\newcommand{\FF}{\mathbb{F}}
\newcommand{\CC}{\mathbb{C}}
\newcommand{\ZZ}{\mathbb{Z}}
\newcommand{\QQ}{\mathbb{Q}}
\newcommand{\NN}{\mathbb{N}}
\newcommand{\MM}{\mathbb{M}}
\newcommand{\SE}{\mathbb{S}}
\newcommand{\XX}{\mathbb{X}}
\newcommand{\algA}{\mathcal{A}}
\newcommand{\algB}{\mathcal{B}}
\newcommand{\algX}[1]{\alg_{\FF}\{X\}_{#1}}
\newcommand{\algZ}[1]{\alg_{\FF}\{Z\}_{#1}}
\newcommand{\mult}{\circ}
\newcommand{\Sym}{{\mathcal S}}
\newcommand{\K}[1]{K_{#1}}  
\newcommand{\KO}[1]{K_{0,#1}} 
\newcommand{\set}[1]{S_{#1}}  
\newcommand{\setO}[1]{S_{0,#1}} 
\newcommand{\I}[1]{I_{#1}}  
\newcommand{\IO}[1]{I_{0,#1}}  
\newcommand{\Ker}{{\rm Ker}}
\newcommand{\rel}[1]{\xi_{#1}}  
\newcommand{\relO}[1]{\xi_{0,#1}}  
\newcommand{\si}{\sigma}
\newcommand{\al}{\alpha}
\newcommand{\be}{\beta}
\newcommand{\ga}{\gamma}
\newcommand{\la}{\lambda}
\newcommand{\de}{\delta}
\newcommand{\De}{\Delta}
\newcommand{\LA}{\langle}
\newcommand{\RA}{\rangle}
\newcommand{\ov}[1]{\overline{#1}}
\newcommand{\un}[1]{{\underline{#1}} }
\newcommand{\alg}{\mathop{\rm alg}}
\newcommand{\mdeg}{\mathop{\rm mdeg}}
\newcommand{\Stab}{\mathop{\rm Stab}}
\newcommand{\tr}{\mathop{\rm tr}}
\newcommand{\rk}{\mathop{\rm rk}}
\newcommand{\Aut}{\mathop{\rm Aut}}
\newcommand{\G}{{\rm G}_2}
\newcommand{\SL}{{\rm SL}}
\newcommand{\GL}{{\rm GL}}
\newcommand{\Ort}{{\rm O}}
\newcommand{\spaceV}{\mathcal{V}}    
\newcommand{\matr}[4]{\left(\begin{array}{cc}
#1 & #2 \\
#3 & #4 \\
\end{array}\right)}
\newcommand{\OO}{\mathbf{O}}
\newcommand{\uu}{\mathbf{u}}
\newcommand{\vv}{\mathbf{v}}
\newcommand{\cc}{\mathbf{c}}
\newcommand{\zero}{\mathbf{0}}
\newcommand{\mL}[1]{\marginpar{\addtolength{\baselineskip}{-2pt}{\footnotesize {\color{blue}  #1}}}}
\newcommand{\tb}[1]{#1}  
\begin{document}
\title[Separating $\G$-invariants of several octonions]{Separating $\G$-invariants of several octonions}

\thanks{The work was supported by UAEU grant G00003324 and partially supported in accordance with the state task of the IM SB RAS, project FWNF-2022-0003}

\author{Artem Lopatin}
\address{Artem Lopatin\\
State University of Campinas, 651 Sergio Buarque de Holanda, 13083-859 Campinas, SP, Brazil}
\email{dr.artem.lopatin@gmail.com (Artem Lopatin)}

\author{Alexandr N. Zubkov}
\address{Alexandr N. Zubkov\\
Department of Mathematical Sciences,
United Arab Emirates University,
Al Ain, Abu Dhabi, United Arab Emirates; 
Sobolev Institute of Mathematics, Omsk Branch, Pevtzova 13, 644043 Omsk, Russia
}
\email{a.zubkov@yahoo.com (Alexandr N. Zubkov)}

\begin{abstract}
We describe separating $\G$-invariants of several copies of the algebra of octonions over an algebraically closed field of characteristic two. We also obtain a minimal separating and a minimal generating \tb{set} for  $\G$-invariants of several copies of the algebra of octonions in case of a field of odd characteristic.

\noindent{\bf Keywords: } polynomial invariants, exceptional groups, separating set, generating set, matrix invariants, positive characteristic.

\noindent{\bf 2020 MSC: } 20G05, 14L30, 17A75, 17D05, 20F29.
\end{abstract}

\maketitle

\section{Introduction}

All vector spaces and algebras are considered over an algebraically closed field $\FF$ of arbitrary characteristic $p=\Char\FF\geq0$. 

We continue the study of the invariants of the diagonal action of the exceptional simple group $\G$ on the space of several octonions, over a field of positive characteristic. Over the field of complex numbers, this was done in \cite{schwarz1988}. This result has been generalized to an arbitrary infinite field of odd characteristic in \cite{zubkov2018}, using a much finer technique of modules with good filtration, together with some results from the theory of groups with triality.

Unfortunately, the technique of modules with good filtration no longer works over a field of characteristic two and the complete description of the generating invariants in this case seems to be \tb{an} an extremely difficult problem. Thus, it makes sense to describe separating invariants, since they satisfy the most important property of ordinary invariants to separate closed orbits in the Zariski topology.  The latter problem is usually more accessible and it does not require extremely technical methods. We describe the separating invariants over an algebraically closed field of characteristic two, using a detailed description of the subalgebras of the octonion algebra (up to the action of $\G$) and the Hilbert-Mumford criterion (the part ``if''{}, see Section~\ref{subsection_1gr}).

The article is organized as follows. In Sections~\ref{section_O} and \ref{section_G2} we define the octonion algebra $\OO$, the group $\G$ and the algebra of $\G$-invariants $\FF[\OO^n]^{\G}$ of $n$ copies of the algebra of octonions $\OO$. We use notations from~\cite{zubkov2018}. Generators and relations between generators for $\FF[\OO^n]^{\G}$ were described by Schwarz~\cite{schwarz1988} over $\FF=\CC$. Zubkov and Shestakov described generators for $\FF[\OO^n]^{\G}$ over an arbitrary field with $\Char\FF\neq2$ (see Section~\ref{section_known}), but generators for the algebra  $\FF[\OO^n]^{\G}$ are still not known in case $p=2$.  Note that the invariants for the action of $F_4$ on several copies of the split Albert algebra were studied in~\cite{shestakov1996}. Our results are formulated in Section~\ref{section_new}. In Section~\ref{section_def} some definitions and notations are given. In Section~\ref{section_MGS} we describe a minimal generating and a minimal separating set for $\FF[\OO^n]^{\G}$ in case $p\neq2$. In Section~\ref{section_trace} a minimal generating set is constructed for the subalgebra $T_n\subset \FF[\OO^n]^{\G}$ of trace invariants in case $p=2$. In Section~\ref{section_subalgebras} subalgebras of $\OO$ of dimension $\leq3$ are described modulo $\G$-action in case $p=2$. This result is applied in Section~\ref{section_separ2} to obtain our main result which is the description of a separating set for $\FF[\OO^n]^{\G}$ in case $p=2$.

\section{Invariants of Octonions}\label{section_inv_O}

\subsection{Octonions}\label{section_O}

The {\it octonion algebra} $\OO=\OO(\FF)$, also known as the {\it split Cayley algebra}, is the vector space of all matrices

$$a=\matr{\al}{\uu}{\vv}{\be}\text{ with }\al,\be\in\FF \text{ and } \uu,\vv\in\FF^3,$$%
endowed with the following multiplication:
$$a a'  =
\matr{\al\al'+ \uu\cdot \vv'}{\al \uu' + \be'\uu - \vv\times \vv'}{\al'\vv +\be\vv' + \uu\times \uu'}{\be\be' + \vv\cdot\uu'},\text{ where } a'=\matr{\al'}{\uu'}{\vv'}{\be'},$$%
$\uu\cdot \vv = u_1v_1 + u_2v_2 + u_3v_3$ and $\uu\times \vv = (u_2v_3-u_3v_2, u_3v_1-u_1v_3, u_1v_2 - u_2v_1)$. For short, denote  $\cc_1=(1,0,0)$, $\cc_2=(0,1,0)$,  $\cc_3=(0,0,1)$, $\zero=(0,0,0)$ from $\FF^3$. Consider the following basis of $\OO$: $$e_1=\matr{1}{\zero}{\zero}{0},\; e_2=\matr{0}{\zero}{\zero}{1},\; \uu_i=\matr{0}{\cc_i}{\zero}{0},\;\vv_i=\matr{0}{\zero}{\cc_i}{0}$$
for $i=1,2,3$. The unity of $\OO$ is denoted by $1_{\OO}=e_1+e_2$. We identify octonions
$$\al 1_{\OO},\;\matr{0}{\uu}{\zero}{0},\; \matr{0}{\zero}{\vv}{0}$$
with $\al\in\FF$, $\uu,\vv\in\FF^3$, respectively. Similarly to $\OO(\FF)$ we define the algebra of octonions $\OO(\algA)$ over any commutative associative $\FF$-algebra $\algA$.

The algebra $\OO$ has a linear involution
$$\ov{a}=\matr{\be}{-\uu}{-\vv}{\al},\text{ satisfying  }\ov{aa'}=\ov{a'}\ov{a},$$%
a {\it norm} $n(a)=a\ov{a}=\al\be-\uu\cdot \vv$, and a non-degenerate symmetric bilinear {\it form} $q(a,a')=n(a+a')-n(a)-n(a')=\al\be' + \al'\be - \uu\cdot \vv' - \uu'\cdot \vv$. Define the linear function {\it trace} by $\tr(a)=a + \ov{a} = \al+\be$. The subspace $\{a\in\OO\,|\,\tr(a)=0\}$ of traceless octonions is denoted by $\OO_0$. Notice that
\begin{eq}\label{eq1}
\tr(aa')=\tr(a'a) \text{ and } n(aa')=n(a)n(a').
\end{eq}%
The following quadratic equation holds:
\begin{eq}\label{eq2}
a^2 - \tr(a) a + n(a) = 0.
\end{eq}%
Since 
\begin{eq}\label{eq3a}
n(a+a')=n(a)+n(a')-\tr(aa') + \tr(a)\tr(a'),
\end{eq}%
the linearization of equation~\Ref{eq2} implies
\begin{eq}\label{eq3}
aa' + a'a - \tr(a) a' - \tr(a') a  -\tr(aa') + \tr(a)\tr(a') = 0.
\end{eq}%
The algebra $\OO$ is a simple {\it alternative} algebra, i.e., the following identities hold for $a,b\in\OO$:
\begin{eq}\label{eq4}
a(ab)=(aa)b,\;\; (ba)a=b(aa).
\end{eq}%
The linearization implies that
\begin{eq}\label{eq5}
a(a'b) + a'(ab)=(aa'+a'a)b,\;\; (ba)a' + (ba')a = b(aa'+a'a).
\end{eq}%
The trace is associative, i.e., for all $a,b,c\in\OO$ we have
\begin{eq}\label{eq6}
\tr((ab)c) = \tr(a(bc)).
\end{eq}%
Note that
\begin{eq}\label{eq_n}
2n(a)=-\tr(a^2)+{\tr}^2(a) \text{ for each }a\in\OO.
\end{eq}%
More details on $\OO$ can be found in Sections 1 and 3 of~\cite{zubkov2018}.

\subsection{The group $\G$}\label{section_G2} The group $\G=\G(\FF)$ is known to be the group $\Aut(\OO)$ of all automorphisms of the algebra $\OO$. The group $\G$ contains a Zariski closed subgroup $\SL_3=\SL_3(\FF)$. Namely, every $g\in\SL_3$ defines the following automorphism of $\OO$:
$$a\to \matr{\al}{\uu g}{\vv g^{-T}}{\be},$$
where $g^{-T}$ stands for $(g^{-1})^T$ and $\uu,\vv\in\FF^3$ are considered as row vectors. In what follows $\SL_3$ is regarded as this subgroup of $\G$. For every $\uu,\vv\in \OO$ define $\de_1(\uu),\de_2(\vv)$ from $\Aut(\OO)$ as follows:
$$\de_1(\uu)(a')=\matr{\al' - \uu\cdot \vv'}{(\al'-\be' - \uu\cdot \vv')\uu + \uu'}{\vv' - \uu'\times \uu}{\be' + \uu\cdot\vv'},$$
$$\de_2(\vv)(a')=\matr{\al' + \uu'\cdot \vv}{\uu' + \vv'\times \vv}{(-\al'+\be' - \uu'\cdot \vv)\vv + \vv'}{\be' - \uu'\cdot\vv}.$$
The group $\G$ is generated by  $\SL_3$ and $\de_1(t\uu_i),\de_2(t\vv_i)$ for all $t\in\FF$ and $i=1,2,3$ \tb{(for example, see Section 3 of~\cite{zubkov2018})}. By straightforward calculations we can see that
\begin{eq}\label{eq_h}
\hbar:\OO\to\OO, \text{ defined by }a \to \matr{\be}{-\vv}{-\uu}{\al},
\end{eq}%
belongs to $\G$ (see also the proof of Lemma 1 of~\cite{zubkov2018}).

The action of $\G$ on $\OO$ satisfies the following properties:
$$\ov{ga}=g\ov{a},\; \tr(ga)=\tr(a),\;n(ga)=n(a),\; q(ga,ga')=q(a,a').$$%
Thus, $\OO_0$ is a $\G$-submodule of $\OO$.

Consider the diagonal action of $\G$ on the vector space $\OO^n=\OO\oplus \cdots\oplus\OO$ ($n$ copies), i.e., $g(a_1,\ldots,a_n)=(ga_1,\ldots,g a_n)$ for all $g\in \G$ and $a_1,\ldots,a_n\in\OO$. The coordinate ring of the affine variety $\OO^n$ is the polynomial $\FF$-algebra $\K{n}=\FF[\OO^n]=\FF[z_{ij}\,|\,1\leq i\leq n,\; 1\leq j\leq 8]$, where $z_{ij}:\OO^n\to\FF$ is defined by $(a_1,\ldots,a_n)\to \al_{ij}$ for 
\begin{eq}\label{eq7}
a_i=\matr{\al_{i1}}{(\al_{i2}, \al_{i3}, \al_{i4})}{(\al_{i5}, \al_{i6}, \al_{i7})}{\al_{i8}}\in\OO.
\end{eq}%
The action of $\GL(\OO)$ on $\OO$ induces the action on $\K{n}$ by $(gf)(\un{a})=f(g^{-1}\un{a})$ for all $g\in\GL(\OO)$, $f\in\K{n}$, $\un{a}\in\OO^n$.

To explicitly describe the action of $\G$ on $\K{n}$ consider the {\it generic octonions}
$$Z_i= \matr{z_{i1}}{(z_{i2}, z_{i3}, z_{i4})}{(z_{i5}, z_{i6}, z_{i7})}{z_{i8}}\in\OO(\K{n})$$%
for $1\leq i\leq n$. Given $g\in\G$, denote by $g\bullet Z_i$ the octonion
$$\matr{g z_{i1}}{(g z_{i2}, g z_{i3}, g z_{i4})}{(g z_{i5}, g z_{i6}, g z_{i7})}{g z_{i8}}\in\OO(\K{n}).$$%
For any commutative algebra $\algA$, the action of $\G$ on $\OO$ extends for $\OO(\algA)$ by $\algA$-linearity. In particular, $\G$ acts on $\OO(\K{n})$. It is easy to see that
\begin{eq}\label{eq_action}
g\bullet Z_i = g^{-1}Z_i,
\end{eq}%
where $g^{-1}Z_i$ stands for the action of $g^{-1}$ on the octonion $Z_i\in \OO(K_n)$.

The algebra of {\it $\G$-invariants of several octonions} ({\it octonion $G_2$-invariants}, for short) is
$$\K{n}^{\G}=\FF[\OO^n]^{\G}=\{f\in \FF[\OO^n]\,|\,gf=f \text{ for all }g\in\G\}.$$
In other words,
$$\K{n}^{\G}=\{f\in \FF[\OO^n]\,|\,f(g\un{a})=f(\un{a}) \text{ for all }g\in\G,\; \un{a}\in \OO^n\}.$$

Similarly we can define $\FF[\OO_0^n]^{\G}$, since $\OO_0\subset \OO$ is invariant with respect to $\G$-action.  Namely, the coordinate ring of the affine variety $\OO_0^n$ is $\KO{n}=\FF[\OO_0^n]=\FF[z_{ij}\,|\,1\leq i\leq n,\; 1\leq j\leq 7]$. The {\it generic traceless octonions} are
$$X_i= \matr{z_{i1}}{(z_{i2}, z_{i3}, z_{i4})}{(z_{i5}, z_{i6}, z_{i7})}{-z_{i1}}.$$
The analogue of formula~\Ref{eq_action} also holds for the generic traceless octonions, namely, $g\bullet X_i = g^{-1}X_i$ for all $g\in \G$ and $1\leq i\leq n$. The algebra of {\it $\G$-invariants of several traceless octonions} is $\KO{n}^{\G}$.

\subsection{Separating sets}\label{subsection_separating}

Consider a finite dimensional vector space $\spaceV$ and a linear group $G<\GL(\spaceV)$. In 2002 Derksen and Kemper~\cite{derksen2002computational} (see~\cite{derksen2002computationalv2} for the second edition) introduced the notion of separating invariants as a weaker concept than generating invariants. Given a subset $S$ of $\FF[\spaceV]^G$ and $u,v$ of $\spaceV$, we write $S(u)\neq S(v)$ if there exists an invariant $f\in S$ with  $f(u)\neq f(v)$. In this case we say that $u,v$ {\it are separated by $S$}. If $u,v\in \spaceV$ are separated by $\FF[\spaceV]^G$, then we say that they {\it are separated}. A subset $S\subset \FF[\spaceV]^G$ of the invariant ring is called {\it separating} if for any $u, v$ from $\spaceV$ that are separated we have that they are separated by $S$.  It is well-known that there always exists a finite separating set (see~\cite{derksen2002computational}, Theorem 2.3.15). We say that a separating set is minimal if it is minimal w.r.t.~inclusion.  Obviously, any generating set is also separating. 
Minimal separating sets and upper bounds on degrees of elements of a separating set for different actions were constructed in~\cite{Cavalcante_Lopatin_1, DM5, domokos2017, Domokos20, Domokos20Add, dufresne2014, Ferreira_Lopatin_2023,   kaygorodov2018, Kemper_Lopatin_Reimers_2022,     kohls2013, Lopatin_Reimers_1}.

\subsection{Known results}\label{section_known}

Denote by $\algZ{n}$ the non-associative $\FF$-algebra generated by the generic octonions $Z_1,\ldots,Z_n$ and $1_{\OO}$. Any product of the generic octonions is called a word of $\algZ{n}$. The unit $1_{\OO}\in \algZ{n}$ is called the empty word. Note that for every $A,B\in \algZ{n}$ we have
\begin{eq}\label{eq_algZ}
\tr(gA)=\tr(A),\; n(gA)=n(A),\; g(AB)=(gA) (gB).
\end{eq}%

\begin{lemma}\label{lemma_inv}
\begin{enumerate}
\item[(a)] The trace of any (non-associative) product of $X_1,\ldots,X_n$ and $n(X_i)$ belong to  $\KO{n}^{\G}$.

\item[(b)] The trace of any (non-associative) product of $Z_1,\ldots,Z_n$ and $n(Z_i)$ belong to  $\K{n}^{\G}$.

\item[(c)] The trace of any (non-associative) product of $Z_1,\ldots,Z_n,\ov{Z_1},\ldots,\ov{Z_n}$ belong to  $\K{n}^{\G}$.
\end{enumerate}
\end{lemma}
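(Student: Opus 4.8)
The plan is to verify each assertion directly from the defining properties of the trace, norm, and $\G$-action collected in Section~\ref{section_inv_O}, reducing everything to the identities~\Ref{eq_algZ}. The key point is that $\G$ acts by algebra automorphisms of $\OO$ (extended $\K{n}$-linearly to $\OO(\K{n})$), so it is compatible with the multiplication, the involution, the trace, and the norm; hence applying $g\in\G$ to a word commutes with all the constructions involved.

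For part (b), I would argue by induction on the length of a (non-associative) product $W$ of $Z_1,\ldots,Z_n$. Since $\G$ acts by automorphisms, $g\bullet(AB)=(g\bullet A)(g\bullet B)$, and by~\Ref{eq_action} the action $g\bullet Z_i=g^{-1}Z_i$ is exactly the action on octonions; therefore $g\bullet W$ equals the same word $W$ evaluated at $g^{-1}Z_1,\ldots,g^{-1}Z_n$. Now the $\FF$-algebra homomorphism $\K{n}\to\K{n}$ induced by $g$ sends the matrix entries $z_{ij}$ of $Z_i$ to the matrix entries of $g^{-1}Z_i$; consequently, for any polynomial $f\in\K{n}$ expressed through these entries, $gf$ is obtained by substituting the entries of $g^{-1}Z_i$. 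Applying this to $f=\tr(W)$: by~\Ref{eq_algZ} we have $\tr(g\bullet W)=\tr(W)$ as an identity in $\OO(\K{n})$, which says precisely that the polynomial $\tr(W)\in\K{n}$ satisfies $g(\tr(W))=\tr(W)$ for all $g\in\G$, i.e. $\tr(W)\in\K{n}^{\G}$. The same reasoning with $n(g\bullet Z_i)=n(Z_i)$ from~\Ref{eq_algZ} gives $n(Z_i)\in\K{n}^{\G}$. Part (a) is the identical argument with $X_i$ in place of $Z_i$, using that $\OO_0$ is $\G$-stable and $g\bullet X_i=g^{-1}X_i$; since traceless octonions are closed under the action and $n$ restricts to $\OO_0$, nothing changes. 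For part (c), one additionally invokes $\ov{ga}=g\ov{a}$ (equivalently $g\bullet\ov{Z_i}=\ov{g\bullet Z_i}$), so a word in $Z_1,\ldots,Z_n,\ov{Z_1},\ldots,\ov{Z_n}$ transforms under $g$ again into the same word evaluated at the $g^{-1}$-translated arguments, and~\Ref{eq_algZ} (trace invariance) finishes it.

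The only genuinely delicate point is bookkeeping: one must be careful that "invariant under $\G$" for a polynomial $f\in\K{n}$ means $gf=f$ where $(gf)(\un a)=f(g^{-1}\un a)$, and match this with the identity $\tr(g\bullet W)=\tr(W)$ holding in $\OO(\K{n})$ via~\Ref{eq_action}. Once the dictionary between the $\bullet$-action on generic octonions and the action on the coordinate ring is made explicit, the proof is a short formal computation; I expect no real obstacle beyond writing the induction on word length cleanly and noting that the empty word contributes the constant $\tr(1_{\OO})=2$ (and $n(1_{\OO})=1$), which are trivially invariant.
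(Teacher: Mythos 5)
Your proof is correct and follows essentially the same route as the paper: for (a) and (b) the paper's proof is exactly your chain $g\,\tr(w)=\tr\bigl(w(g\bullet Z_1,\ldots,g\bullet Z_n)\bigr)=\tr\bigl(w(g^{-1}Z_1,\ldots,g^{-1}Z_n)\bigr)=\tr(g^{-1}w)=\tr(w)$ based on~\Ref{eq_action} and~\Ref{eq_algZ}, with the norm handled identically. The only (harmless) divergence is in part (c): you invoke equivariance of the involution, $\ov{ga}=g\ov{a}$, directly, whereas the paper deduces (c) from (b) via $\tr(\ov{a})=\tr(a)$, $n(\ov{a})=n(a)$ and $\tr(\ov{a}b)=\tr(a)\tr(b)-\tr(ab)$; both are valid.
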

\begin{proof} Let $w=w(Z_1,\ldots,Z_n)$ be some (non-associative) product of $Z_1,\ldots,Z_n$. Given $g\in \G$, equalities~\Ref{eq_action}, \Ref{eq_algZ} imply that $g \tr(w) =  \tr(w(g\bullet Z_1,\ldots,g\bullet Z_n))=  \tr(w(g^{-1} Z_1,\ldots,g^{-1} Z_n))=\tr(g^{-1} w) = \tr(w)$. The case of $n(Z_i)$ is considered similarly. Part (b) is proven. The proof of part (a) is the same. Part (c) follows from part (b) and formulas
$$\tr(\ov{a})=\tr(a),\quad n(\ov{a})=n(a),\quad \tr(\ov{a}b)=\tr(a)\tr(b)-\tr(ab)$$
for all $a,b\in\OO$.
\end{proof}

In case $\FF=\QQ$ for every $A_1,\ldots,A_4\in \algZ{n}$ denote by $Q'(A_1,A_2,A_3,A_4)$ the  complete skew symmetrization of $\tr(((A_1A_2)A_3)A_4)$ with respect to its arguments, i.e.,
$$Q'(A_1,A_2,A_3,A_4) = \frac{1}{24} \sum_{\si\in \Sym_4}(-1)^{\si} \tr(((A_{\si(1)} A_{\si(2)}) A_{\si(3)}) A_{\si(4)}).$$%
In~\cite{zubkov2018} it was shown that all coefficients of $Q'(X_1,X_2,X_3,X_4)$ belong to $\ZZ[\frac{1}{2}]$. Lemma~\ref{lemmaQ} (see below) implies that all coefficients of $Q'(Z_1,Z_2,Z_3,Z_4)$ also belong to $\ZZ[\frac{1}{2}]$. Thus $Q'(A_1,A_2,A_3,A_4)$ is well-defined over an arbitrary field of odd characteristic.

In case $\Char{\FF}\neq2$ the algebra of invariants
\begin{enumerate}
\item[$\bullet$] $\KO{n}^{\G}$ is generated by $\tr(X_i X_j)$, $\tr((X_i X_j) X_k)$, $Q'(X_i, X_j, X_k, X_l)$;

\item[$\bullet$] $\K{n}^{\G}$ is generated by $\tr(Z_i)$, $\tr(Z_i Z_j)$, $\tr((Z_i Z_j) Z_k)$, $Q'(Z_i, Z_j, Z_k, Z_l)$
\end{enumerate}
for all $1\leq i,j,k,l\leq n$ (see Corollary 9 of~\cite{zubkov2018}, Section 1 of~\cite{zubkov2018}).

\subsection{New results}\label{section_new}

Denote by $\setO{n}$ the set 
$$n(X_i);\;\; \tr((\cdots ((X_{i_1} X_{i_2})X_{i_3})\cdots )X_{i_k}), \;\; i_1<\cdots<i_k,\;\; k>1,$$
for all $1\leq i,i_1,\ldots,i_k\leq n$, and by $\set{n}$ the set
$$n(Z_i);\;\; \tr((\cdots ((Z_{i_1} Z_{i_2})Z_{i_3})\cdots )Z_{i_k}), \;\; i_1<\cdots<i_k,\;\; k>0,$$
for all $1\leq i,i_1,\ldots,i_k\leq n$. Given $1\leq k\leq n$, denote by $\setO{n}^{(k)}$ and $\set{n}^{(k)}$ (respectively) the subset of $\setO{n}$ and $\set{n}$ (respectively) of elements of degree less or equal to $k$. 

In case $\Char{\FF}=2$ generators for the algebras $\KO{n}^{\G}$ and $\K{n}^{\G}$ are not known. We introduce the algebra of {\it trace $\G$-invariants of octonions} $T_{n}\subset K_{n}^{\G}$, i.e., the algebra $T_{n}$ is generated by $n(Z_1),\ldots,n(Z_n)$  and the traces of all (non-associative) products of $Z_1,\ldots,Z_n$ (see Lemma~\ref{lemma_inv}). In case $\Char{\FF}\neq2$ we obviously have that $T_{n}=K_{n}^{\G}$. We obtain the following results:
\begin{enumerate}
\item[$\bullet$] $\set{n}^{(4)}$ is a minimal (w.r.t.~inclusion) generating set for $\K{n}^{\G}$ in case $\Char{\FF}\neq2$ (see Proposition~\ref{prop_MGS});

\item[$\bullet$] $\set{n}^{(4)}$ is a minimal (w.r.t.~inclusion) separating set for $\K{n}^{\G}$ in case $\Char{\FF}\neq2$ (see Proposition~\ref{prop_MSS});

\item[$\bullet$] $T_{n}$ is minimally generated by $\set{n}$ in case $\Char{\FF}=2$ (see Theorem~\ref{theo_MGS_T});
 
\item[$\bullet$] $\set{n}^{(8)}$ is a separating set for $\K{n}^{\G}$ in case $\Char{\FF}=2$ (see Theorem~\ref{theo_separ2}).
\end{enumerate}

\section{Auxiliaries}\label{section_def}

\subsection{Indecomposable invariants}\label{subsection_ind}

Denote by $\FF\{\XX\}_n$ the free non-associative and non-commutative unital $\FF$-algebra with free generators $x_1,\ldots,x_n$, which are called letters. A word $w$ is a non-empty product of letters. The number of letters  in $w$ is the degree $\deg(w)$ of $w$. The degree of $w$ in $x_i$ is denoted by $\deg_{x_i}(w)$ and the total degree of $w$ is denoted by $\deg(w)$. The multidegree of a word $w$ is $\mdeg(w)=(\deg_{x_1}(w),\ldots, \deg_{x_n}(w))$. A word $w$ with $\deg_{x_i}(w)\leq 1$ for all $i$ is called multilinear. An element $f=\tb{\sum_i}\al_i w_i$ of  $\FF\{\XX\}_n$, where $\al_i\in\FF$ and $w_i$ is a word, is $\NN$-homogeneous ($\NN^n$-homogeneous, respectively) if there exists $d$ ($\De\in \NN^n$, respectively) such that  $\deg(w_i)=d$ ($\mdeg(w_i)=\De$, respectively) for all $i$, where $\NN$ stands for non-negative integers. Define homomorphisms of $\FF$-algebras $\phi_0: \FF\{\XX\}_n \to \algX{n}$ and $\phi: \FF\{\XX\}_n \to \algZ{n}$ by $x_i\to X_i$ and $x_i\to Z_i$ (respectively) for all $i$.
In other words, for $f=f(x_1,\ldots,x_n)\in\FF\{\XX\}_n$ we have $\phi(f)=f(Z_1,\ldots,Z_n)\in\algZ{n}$. We write $x_{i_1}\mult \cdots \mult x_{i_k}$ for some  non-associative product of $x_{i_1},\ldots,x_{i_k}$. Similar notation we use for non-associative product in $\algZ{n}$.

For $f\in \K{n}$ denote by $\deg(f)$ its degree and by $\mdeg(f)$ its multidegree, i.e.,
$\mdeg(f)=(t_1,\ldots,t_n)$, where $t_i$ is the total degree of the polynomial $f$ in $z_{ij}$, $1\leq j\leq 8$, and $\deg(f)=t_1+\cdots+t_n$. For $f\in\KO{n}$ the degree and multidegree are defined as above. It is well-known that the algebras $\KO{n}^{\G}$ and $\K{n}^{\G}$ have $\NN$-gradings by degrees and $\NN^n$-gradings by multidegrees. 

Consider an $\NN^n$-graded unital (possibly, non-associative) algebra $\algA$ with the component of degree zero equal to $\FF$. Denote by $\algA^{+}$ the subalgebra generated by homogeneous elements of positive degree. A set $\{a_i\} \subseteq \algA$ is a minimal (by inclusion) $\NN^n$-homogeneous generating set
(m.h.g.s.) of $\algA$ as a unital algebra~if and only if the $a_i$'s are $\NN^n$-homogeneous and $\{\ov{a_i}\}\cup\{1\}$ is a basis of the vector space $\ov{\algA}={\algA}/{(\algA^{+})^2}$. 
We say that an element $a\in \algA$ is {\it
decomposable} and we write $a\equiv0$ if $a\in (\algA^{+})^2$. In other words, a decomposable
element is equal to a polynomial in elements of strictly less degree. Therefore,  the largest degree 
of indecomposable elements of  $\algA$  is equal to the least upper bound for the degrees of elements of a m.h.g.s.~for $\algA$.

\subsection{One-parameter subgroups of $\G$}\label{subsection_1gr}

Consider a finite dimensional vector space $\spaceV$ and a linear (closed) group $G<\GL(\spaceV)$. For a point $v\in \spaceV$ and for a one-parameter subgroup $\theta: \FF^{\times}\to G$ we have $\theta(t)v=\sum_{i\in I(v)} t^i v^{(i)}$ for all $t\in \FF^{\times}$, where $I(v)=\{ i\in\mathbb{Z}\mid v^{(i)}\neq 0 \}$. Following \cite{kempf_1978} we
say that $\lim_{t\to 0}\theta(t)v$ exists if and only if $I(v)$ consists of non-negative integers. Then $\lim_{t\to 0}\theta(t)v =0$ if and only if $I(v)$ consists of positive integers only, otherwise $\lim_{t\to 0}\theta(t)v=v^{(0)}$. It is clear that if $\lim_{t\to 0}\theta(t)v$ exists, then it is contained in $\overline{Gv}$. Indeed, if $f$ is a polynomial function on $\spaceV$, that vanishes on the $G$-orbit of $v$, then $h(t)=f(\theta(t)v)$ is a polynomial in $t$, such that $h(t)=0$ for any $t\neq 0$. Since $\FF$ is infinite, $h(t)$ is identically zero, i.e., $h(0)=
f(v^{(0)})=0$.

Given $\un{\la}\in\ZZ^3$ with $\la_1+\la_2+\la_3=0$, the {\it standard} one-parameter subgroup $\theta_{\un{\la}}$ of $\G$ is defined by 
$$\theta_{\un{\la}}(t)e_i=e_i,\quad  
\theta_{\un{\la}}(t)\uu_j= t^{\la_j}\uu_j,\quad
\theta_{\un{\la}}(t)\vv_j= t^{-\la_j}\vv_j, $$
for all $i= 1,2$ and $1\leq j\leq3$. 

\section{Minimal generating and separating sets}\label{section_MGS}

In this section we write $\tr(i_1,\ldots,i_k)$ for $\tr((\cdots((Z_{i_1} Z_{i_2}) Z_{i_3}) \cdots )Z_{i_k})$, where $1\leq i_1,\ldots,i_k\leq n$. The following lemma can be proven by straightforward calculations.

\begin{lemma}\label{lemmaQ} Assume that $\Char{\FF}\neq2$. Then
$$Q'(Z_1,Z_2,Z_3,Z_4) = 
\tr(1234) + \frac{1}{2} ( - \tr(1) \tr(2) \tr(3) \tr(4)  + $$
  $$ - \tr(1) \tr(234) - \tr(2) \tr(134) -
     \tr(3) \tr(124) - \tr(4) \tr(123) + $$
  $$ - \tr(12) \tr(34) + \tr(13) \tr(24) -
     \tr(14) \tr(23)  +$$
   $$\tr(1) \tr(2) \tr(34) + \tr(1) \tr(4) \tr(23) +
     \tr(2) \tr(3) \tr(14) + \tr(3) \tr(4) \tr(12)).$$
\end{lemma}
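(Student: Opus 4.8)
The identity is an equality of polynomials in the coordinate ring $\K{4}$ (indeed in $\ZZ[\tfrac12][z_{ij}]$), so in principle it can be verified by brute-force expansion; but the point of the plan is to organize the computation so it does not become unmanageable. The left-hand side $Q'(Z_1,Z_2,Z_3,Z_4)$ is by definition $\tfrac1{24}\sum_{\si\in\Sym_4}(-1)^{\si}\tr(((Z_{\si(1)}Z_{\si(2)})Z_{\si(3)})Z_{\si(4)})$, so the first step is to reduce the $24$ summands using the associativity of the trace, equation~\Ref{eq6}, and the cyclic-type symmetry coming from \Ref{eq1}. Concretely, \Ref{eq6} lets one move parentheses freely inside a trace, and \Ref{eq1} together with the involution identities gives $\tr(ab)=\tr(ba)$; combined with the alternative identities \Ref{eq4}--\Ref{eq5} one can show that on the multilinear component the alternating sum collapses to a manageable normal form. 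The plan is to pick a fixed bracketing, say $\tr(((Z_{i_1}Z_{i_2})Z_{i_3})Z_{i_4})$, rewrite every one of the $24$ terms in that shape modulo lower terms, and collect.

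\textbf{Key reduction.} The essential tool is equation~\Ref{eq3}: for any $a,a'\in\OO$ one has $aa'+a'a=\tr(a)a'+\tr(a')a+\tr(aa')-\tr(a)\tr(a')$. Multiplying this relation on the right by a third octonion and taking traces, and then doing the analogous thing to handle a fourth octonion, expresses the symmetric part of any product in terms of traces of shorter products. Since $Q'$ is fully skew-symmetric, pairing the $24$ permutations as $\si$ and $\si\cdot(k\,k{+}1)$ and applying \Ref{eq3} to the adjacent transposed pair converts each such difference of two $\tr(\cdots)$ terms into a sum of products of lower-degree traces (degree $3$, $2$, $1$ pieces), exactly the terms appearing on the right-hand side. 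One then has to do this bookkeeping carefully — there are three adjacent positions and the reductions interact — but the set of monomials in $\tr(i)$, $\tr(ij)$, $\tr(ijk)$, $\tr(ijkl)$ that can appear is small and is constrained by multidegree (each index $1,2,3,4$ occurring exactly once) and by the requirement of being skew-symmetric, so the number of unknown coefficients to pin down is modest. I would fix those coefficients either by continuing the symbolic reduction or, as a shortcut, by evaluating both sides on a few convenient tuples of octonions (for instance tuples built from $e_1,e_2,\uu_i,\vv_i$) and solving the resulting small linear system; since the claimed coefficients all lie in $\ZZ[\tfrac12]$ and $\Char\FF\neq2$, this determines the identity.

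\textbf{Main obstacle.} The genuine difficulty is purely organizational: octonion multiplication is non-associative, so "trace of a product of four elements" is not a single well-defined object, and one must be scrupulous about which bracketings are being compared and about the signs produced when \Ref{eq3} and \Ref{eq5} are applied repeatedly inside a length-four product. In particular the interplay between the two families of reductions — moving parentheses via \Ref{eq6} versus symmetrizing adjacent factors via \Ref{eq3} — is where sign errors creep in, and one should double-check the coefficients of the "$\tr(1)\tr(2)\tr(34)$"-type terms, whose signs in the statement are not uniform. Everything else (linearity, the gradings, the fact that the construction is defined over $\ZZ[\tfrac12]$ by the remark preceding the lemma together with Lemma~\ref{lemma_inv}) is routine. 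Hence the proof is "by straightforward calculations," as the paper indicates, with the reductions above supplying the structure that keeps the calculation finite and checkable.
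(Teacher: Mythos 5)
Your overall route --- regard the claimed identity as a polynomial identity and verify it by a direct calculation organized through \Ref{eq1}, \Ref{eq3}, \Ref{eq5}, \Ref{eq6} (or by brute force) --- is in substance the paper's approach: the paper offers nothing beyond the remark that the lemma follows by straightforward calculation. But the one concrete mechanism you give is stated backwards, and taken literally it would fail. Equation \Ref{eq3} controls the \emph{symmetrized} product: $aa'+a'a$ is an expression of lower degree, so it is the sum $\tr(w_{\si})+\tr(w_{\si\cdot(k,k+1)})$ that reduces to products of shorter traces (this is exactly the content of \Ref{eq_trAlt1}--\Ref{eq_trAB} in Lemma~\ref{lemma1}). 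In the alternating sum defining $Q'$ the two permutations of such a pair enter with \emph{opposite} signs, so what actually occurs is the difference $\tr(w_{\si})-\tr(w_{\si\cdot(k,k+1)})$, which equals $2\tr(w_{\si})$ minus the \Ref{eq3}-reducible sum; the degree-four part therefore survives, and after dividing by $24$ it is precisely the leading term $\tr(1234)$ with coefficient $1$. If each paired difference really collapsed into products of lower-degree traces, as you assert, you would conclude that $Q'$ is a polynomial in shorter traces with no $\tr(1234)$ term, contradicting the very formula you are proving (and the indecomposability of $\tr(1234)$ exploited in Proposition~\ref{prop_MGS}). The correct bookkeeping is the one sketched in your first paragraph --- rewrite each of the $24$ summands as $\pm\tr(1234)$ plus explicit lower-degree corrections and collect --- so the flaw is local, but as written the ``key reduction'' step is wrong.

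Two further points to keep you honest in the bookkeeping. For the left-normed bracketing $((Z_1Z_2)Z_3)Z_4$ only the position pairs $(1,2)$ and, after \Ref{eq6}, $(3,4)$ are genuine adjacent factors; the $(2,3)$ swap requires the linearized alternative law \Ref{eq5} (as in \Ref{eq_trAlt1}, \Ref{eq_trAlt2}) in combination with \Ref{eq3}, so \Ref{eq3} and \Ref{eq6} alone do not suffice --- you mention \Ref{eq4}--\Ref{eq5} in passing, but they carry real weight here. Finally, the shortcut of pinning down coefficients by evaluating at special tuples is legitimate only after the symbolic reduction has shown that $Q'-\tr(1234)$ lies in the span of the multilinear products of $\tr(i)$, $\tr(ij)$, $\tr(ijk)$, and only together with the linear independence of those products in multidegree $(1,1,1,1)$ (which itself needs a check, e.g.\ again by evaluation); so the evaluation step does not replace the reduction, it only replaces the tracking of the rational coefficients.
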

\medskip

Recall that the definition of $T_n$ was given in Section~\ref{section_new}.

\begin{lemma}\label{lemma1}
Let $w\in\FF\{\XX\}_n$ be a word.
\begin{enumerate}
\item[1.] If $w$ is not multilinear and $\deg(w)>2$, then $\tr(w(Z_1,\ldots,Z_n))\equiv0$ in $T_{n}$.

\item[2.] If $w$ is multilinear and $w$ is a product of letters $x_{i_1},\ldots, x_{i_k}$ for $1\leq i_1<\cdots<i_k\leq n$, then $$\tr(w(Z_1,\ldots,Z_n))\equiv\pm\tr(i_1,\ldots ,i_k)\;\text{ in } \;T_{n}.$$

\item[3.] For all $1\leq i_1<\cdots<i_k\leq n$ with $k\geq 3$ and every permutation $\si\in \Sym_k$ we have $$\tr(i_{\si(1)},\ldots ,i_{\si(k)})\equiv (-1)^{\si} \tr(i_1,\ldots ,i_k)\;\text{ in } \;T_{n}.$$
\end{enumerate}
\end{lemma}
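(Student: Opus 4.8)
The plan is to work entirely inside the commutative algebra $T_n$, where ``$\equiv$'' means equality modulo the ideal $(T_n^+)^2$ of decomposables, and to systematically push every trace of a non-associative word down to either a decomposable element or a normalized ``left-normed'' multilinear trace $\tr(i_1,\dots,i_k)$ with strictly increasing indices. The three key algebraic inputs are: the quadratic identity \Ref{eq2} together with its linearization \Ref{eq3}; the associativity of the trace \Ref{eq6}; and the trace-symmetry $\tr(ab)=\tr(ba)$ from \Ref{eq1}, plus the norm-relation \Ref{eq_n} which in characteristic two reads $\tr(a^2)=\tr^2(a)$ but in general lets us trade $n(a)$ against $\tr(a^2)$ and $\tr(a)$. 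All of these are available as stated in Section~\ref{section_inv_O}.

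\emph{Part 1.} First I would reduce to the case where $w$ contains a repeated letter, say $x_i$ appears at least twice. Using \Ref{eq3} in the form $x_i y \equiv \tr(x_i) y + (\text{lower-degree stuff})$ — more precisely, $x_iy + yx_i = \tr(x_i)y + \tr(y)x_i + \tr(x_iy) - \tr(x_i)\tr(y)$ — one shows that modulo decomposables any word can be rebracketed and that a product of two octonions in which one factor is ``small'' collapses. The cleanest route: if $\deg(w)>2$ and $w$ is non-multilinear, then in the non-associative algebra $\algZ n$ the element $\phi(w)$ lies, modulo the subalgebra generated by lower-degree elements, in the span of words of the shape $Z_i^2\cdot(\cdots)$; then \Ref{eq2} gives $Z_i^2 = \tr(Z_i)Z_i - n(Z_i)$, both of whose coefficients are scalars in $T_n^+$ of positive degree, so the whole trace becomes decomposable. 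The one genuinely careful point is handling the non-associativity — one must be sure that ``bringing two copies of $Z_i$ together'' is legitimate modulo decomposables, which is exactly where the linearized alternative laws \Ref{eq5} and \Ref{eq3} are needed.

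\emph{Part 2 and Part 3.} For a multilinear word $w$ in $x_{i_1},\dots,x_{i_k}$ (with the $i_j$ now thought of as distinct, arbitrary order) I would induct on $k$, using \Ref{eq6} to rebracket freely inside a trace at the cost of decomposables and \Ref{eq3} to move any factor past another, again modulo decomposables: $\tr(\cdots (ab)\cdots)\equiv -\tr(\cdots(ba)\cdots) + (\text{lower degree})$ once $k\ge 3$ (the sign is forced because the ``correction terms'' in \Ref{eq3}, being products of a trace and a shorter trace, are decomposable when $k\ge 3$ but \emph{not} when $k=2$, which is why Part~3 needs $k\ge3$ and why $\tr(i,j)=\tr(j,i)$ is symmetric rather than alternating). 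Combining ``rebracket freely'' with ``transpose adjacent factors up to sign'' shows that the left-normed trace $\tr(j_1,\dots,j_k)$ for any ordering of the indices equals $(-1)^\sigma\tr(i_1,\dots,i_k)$ modulo decomposables, which is simultaneously Part~3 and the content of Part~2 (the ambient word $w$ first gets rebracketed to left-normed form, then sorted). I should check the base case $k=3$ explicitly from \Ref{eq3} and \Ref{eq6} to pin down the signs, and verify that the map $\sigma\mapsto$ (sign picked up) is indeed the alternating character — this is a cocycle/consistency check that it is well-defined, i.e.\ independent of the chosen sequence of adjacent transpositions.

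\emph{Main obstacle.} The delicate part is Part~1: making rigorous, in a non-associative setting, the claim that a non-multilinear word's trace is decomposable. The alternative laws only give associativity inside subalgebras generated by two elements, so one cannot naively ``collect'' two distant copies of $Z_i$; the argument must instead induct on degree and use \Ref{eq5}/\Ref{eq3} to peel off one factor at a time, each time either producing a decomposable correction or reducing to a strictly shorter word, until two copies of the repeated letter become adjacent and \Ref{eq2} finishes the job. Keeping track of which correction terms are genuinely decomposable (products of \emph{two} positive-degree invariants) versus merely ``shorter'' is the bookkeeping that has to be done carefully; everything else is a routine application of the identities \Ref{eq1}--\Ref{eq6} and \Ref{eq_n}.
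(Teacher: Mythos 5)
Your plan is correct and follows essentially the same route as the paper: combining \Ref{eq2}, \Ref{eq3} and \Ref{eq5} to get, modulo decomposables, the sign-reversing rebracketing and transposition rules, normalizing an arbitrary bracketing to left-normed form, deducing skew-symmetry for $k\geq 3$ (with exactly the right explanation of why $k=2$ fails), and killing non-multilinear words via $\tr(\cdots(A^2)\cdots)\equiv 0$. The only cosmetic difference is that the paper organizes the ``peel off one factor at a time'' step as a maximality argument on the length of the left-normed prefix rather than an induction on degree, and note that \Ref{eq6} alone only rebrackets the outermost product — the inner rebracketing really does come from \Ref{eq3}/\Ref{eq5}, as you acknowledge in your last paragraph.
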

\begin{proof} Combining \Ref{eq3} and \Ref{eq5} we obtain that
$$
a(a'b) + a'(ab)=(\tr(a) a' + \tr(a') a  +\tr(aa') - \tr(a)\tr(a'))b
$$
for all $a,a',b\in\OO$. Since $\FF$ is infinite, the same equality holds for the generic octonions. We multiply it from the left and from the right by the generic octonions and then apply the trace. Since the trace is a linear function, we obtain that
\begin{eq}\label{eq_trAlt1}
\tr(C_1\mult \cdots \mult C_r\mult (A(A'B)) \mult C_{r+1}\mult \cdots \mult C_s) \equiv -\tr(C_1\mult\cdots \mult  C_r\mult (A'(AB)) \mult C_{r+1}\mult \cdots \mult C_s)
\end{eq}%
for all products of the generic octonions $A,A',B, C_1,\ldots,C_s$ with $0\leq r\leq s$ and $s\geq 0$. Similarly, we obtain that
\begin{eq}\label{eq_trAlt2}
\tr(C_1\mult \cdots \mult C_r\mult ((BA) A') \mult C_{r+1}\mult \cdots \mult C_s) \equiv -\tr(C_1\mult\cdots \mult  C_r\mult ((BA') A) \mult C_{r+1}\mult \cdots \mult C_s)
\end{eq}%
In the same manner as above, \Ref{eq2} and \Ref{eq3} imply that
\begin{eq}\label{eq_trAA}
\tr(C_1\mult \cdots \mult C_r\mult (A^2) \mult C_{r+1}\mult \cdots \mult C_s)\equiv0,
\end{eq}
\vspace{-0.5cm}
\begin{eq}\label{eq_trAB}
\tr(C_1\mult \cdots \mult C_r\mult (AA') \mult C_{r+1}\mult \cdots \mult C_s) \equiv -\tr(C_1\mult\cdots \mult  C_r\mult (A'A) \mult C_{r+1}\mult \cdots \mult C_s),
\end{eq}%
where in both cases $0\leq r\leq s$ and $s> 0$. We claim that

\begin{eq}\label{eq_claim}
\begin{array}{c}
\text{If $W=Z_{i_1}\mult \cdots \mult Z_{i_k}$ is a product of generic octonions} \\
\text{\tb{ where $1\leq i_1,\ldots,i_k\leq n$},}\\
\text{then $\tr(W)\equiv\pm\tr(i_{\si(1)},\ldots,i_{\si(k)})$ for some $\sigma\in \Sym_k$}.\\
\end{array}
\end{eq}

\noindent{}Assume that claim~\Ref{eq_claim} does not hold. Then there exists $\tau\in \Sym_k$ and \tb{the maximal $2\leq r< k$} such that some product $W'=Z_{i_{\tau(1)}}\mult \cdots \mult Z_{i_{\tau(k)}}$ satisfies the following conditions: $\tr(W)\equiv\pm\tr(W')$ and $$W'=C_1\mult \cdots \mult  (U (V_1 V_2)) \mult \cdots \mult C_s
\;\text{ or }\; W'=C_1\mult \cdots \mult  (VU) \mult \cdots \mult C_s, \text{ where}
$$
\begin{enumerate}
\item[$\bullet$] $U=(\cdots((Z_{j_1} Z_{j_2}) Z_{j_3}) \cdots )Z_{j_r}$ for some $1\leq j_1,\ldots, j_r\leq n$,  

\item[$\bullet$]$V,V_1,V_2$ are some products of generic octonions, 

\item[$\bullet$] $C_1,\ldots,C_s$ are generic octonions  with $s\geq0$. 
\end{enumerate}
By~\Ref{eq1} and~\Ref{eq_trAB}, we can assume that $W'=C_1\mult \cdots \mult  (U (V_1 V_2)) \mult \cdots \mult C_s$. Consequently, applying equivalence~\Ref{eq_trAlt1} and equivalence~\Ref{eq1} or~\Ref{eq_trAB}, we obtain that 
$$\tr(C_1\mult \cdots \mult  (U (V_1 V_2)) \mult \cdots \mult C_s)\equiv 
- \tr(C_1\mult \cdots \mult  (V_1 (U V_2)) \mult \cdots \mult C_s)\equiv$$
$$\pm \tr(C_1\mult \cdots \mult  ((U V_2)V_1) \mult \cdots \mult C_s).$$
If $V_2$ is a product of more than one generic octonions, then $V_2=V_2' V_2''$ for some products $V_2'$, $V_2''$ of generic octonions and we repeat the reasoning for $C_1\mult \cdots \mult  (U (V'_2 V''_2)) \mult \cdots \mult C_s$, \tb{and so on}. \tb{Finally}, we can assume that $V_2=Z_j$ for some $j$; a contradiction to the \tb{maximality of r}.

\tb{Equivalences \Ref{eq_trAlt2} and~\Ref{eq_trAB} imply that part 3 is valid for $1\leq i_1,\ldots, i_k\leq n$, where $k\geq3$. This fact together with claim~\Ref{eq_claim} imply part 2. Similarly, this fact together with claim~\Ref{eq_claim} and formula~\Ref{eq_trAA} imply part 1.}

\end{proof}



\begin{prop}\label{prop_MGS}
In case $\Char{\FF}\neq2$ the algebra of invariants $\K{n}^{\G}$ is minimally generated by $\set{n}^{(4)}$.
\end{prop}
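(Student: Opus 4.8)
The proposition splits into two claims: that $\set{n}^{(4)}$ generates $\K{n}^{\G}$ (its elements lie in $\K{n}^{\G}$ by Lemma~\ref{lemma_inv}(b)), and that it does so minimally. For the second claim I plan to use the criterion recalled in Section~\ref{subsection_ind}. The elements of $\set{n}^{(4)}$ are $\NN^n$-homogeneous with pairwise distinct multidegrees: $\tr(i)$ has multidegree with a single entry $1$ in position $i$; $n(Z_i)$ a single entry $2$ in position $i$; and $\tr(i_1,\dots,i_k)$, $i_1<\dots<i_k$, has entries $1$ exactly in positions $i_1,\dots,i_k$. Hence their images in $\ov{\K{n}^{\G}}$ are automatically linearly independent as soon as each is nonzero, so once generation is established, minimality with respect to inclusion amounts to the single statement that every element of $\set{n}^{(4)}$ is indecomposable.

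For generation, I would let $R\subseteq\K{n}^{\G}$ be the subalgebra generated by $\set{n}^{(4)}$ and show that $R$ contains the generators $\tr(Z_i)$, $\tr(Z_iZ_j)$, $\tr((Z_iZ_j)Z_k)$, $Q'(Z_i,Z_j,Z_k,Z_l)$ of~\cite{zubkov2018} recalled in Section~\ref{section_known}; then $R=\K{n}^{\G}$. The first lies in $\set{n}^{(4)}$; for $\tr(Z_iZ_j)$ use $\tr(Z_iZ_j)=\tr(Z_jZ_i)$ from~\Ref{eq1} to reduce to $i\le j$, and for $i=j$ invoke $\tr(Z_i^2)=\tr^2(Z_i)-2n(Z_i)$ from~\Ref{eq_n}; in particular all degree-$\le2$ invariants lie in $R$. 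Since $T_n=\K{n}^{\G}$ when $\Char\FF\neq2$, Lemma~\ref{lemma1} applies and rewrites $\tr((Z_iZ_j)Z_k)$, modulo decomposable invariants, as $\pm\tr(i',j',k')$ with $i'<j'<k'$ if $i,j,k$ are distinct and as $0$ otherwise; since the degree-$3$ decomposable invariants are products of invariants of degree $\le2$, already in $R$, this gives $\tr((Z_iZ_j)Z_k)\in R$. Finally $Q'$ is completely skew-symmetric, hence vanishes unless $i,j,k,l$ are distinct, in which case it equals $\pm Q'(Z_{i'},Z_{j'},Z_{k'},Z_{l'})$ with $i'<j'<k'<l'$, and by Lemma~\ref{lemmaQ} this is $\tr(i',j',k',l')$ plus a polynomial in traces of degree $\le3$, all lying in $R$; so $\tr(i',j',k',l')\in R$, and therefore $R=\K{n}^{\G}$.

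For indecomposability, since $\FF$ is infinite it suffices, for each $w\in\set{n}^{(4)}$, to exhibit a point of $\OO^n$ where $w$ is nonzero while every $\NN^n$-homogeneous decomposable invariant of the multidegree of $w$ vanishes. Degrees $1,2,3$ are immediate: there are no decomposables of degree $1$ and $\tr(e_1)=1$; the only decomposable of $n(Z_i)$'s multidegree is proportional to $\tr^2(Z_i)$, which vanishes at $Z_i=\uu_1+\vv_1$ while $n(\uu_1+\vv_1)=-1$; the only one of $\tr(i,j)$'s multidegree is $\tr(Z_i)\tr(Z_j)$, which vanishes at $Z_i=\uu_1$, $Z_j=\vv_1$ while $\tr(\uu_1\vv_1)=\tr(e_1)=1$; and every decomposable of $\tr(i,j,k)$'s multidegree carries a factor $\tr(Z_a)$, hence vanishes at the traceless triple $(\uu_1,\uu_2,\uu_3)$, where however $(\uu_1\uu_2)\uu_3=\vv_3\uu_3=e_2$ has trace $1$. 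For $\tr(i,j,k,l)$ with $i<j<k<l$, a traceless evaluation kills every decomposable of that multidegree except the three products $\tr(Z_aZ_b)\tr(Z_cZ_d)$ with $\{a,b,c,d\}=\{i,j,k,l\}$; at $(\uu_1,\uu_2,\uu_3,e_1-e_2)$ each such product has a vanishing factor, since $\tr(\uu_a\uu_b)=0$ (the product $\uu_a\uu_b$ lies in the $\vv$-part of $\OO$) and $\tr(\uu_a(e_1-e_2))=0$, whereas $((\uu_1\uu_2)\uu_3)(e_1-e_2)=e_2(e_1-e_2)=-e_2$ has trace $-1$. This shows all of $\set{n}^{(4)}$ is indecomposable and finishes the proof.

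I expect the degree-$4$ case of indecomposability to be the main obstacle: one must find a single octonion tuple that simultaneously annihilates \emph{all} decomposable invariants of that multidegree — in particular all three distinct products of pairwise traces — yet keeps $\tr(((Z_iZ_j)Z_k)Z_l)$ nonzero. Passing to traceless octonions disposes of every decomposable with a linear factor, and the tuple $(\uu_1,\uu_2,\uu_3,e_1-e_2)$ is tailored so that the bracketing $(\uu_1\uu_2)\uu_3$ reaches the idempotent $e_2$ (which pairs nontrivially with a trace-zero diagonal octonion) while the surviving pairwise traces all vanish; checking these few products in $\OO$ is the only real computation. The rest — the Lemma~\ref{lemma1}/Lemma~\ref{lemmaQ} bookkeeping in the generation step and the small products in degrees $\le3$ — is routine.
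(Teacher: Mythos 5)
Your proposal is correct and follows essentially the same route as the paper: generation is deduced from the Zubkov--Shestakov generators via Lemma~\ref{lemmaQ}, Lemma~\ref{lemma1} and formula~\Ref{eq_n}, and the degree-$4$ element is shown irremovable by evaluating at three isotropic octonions together with $e_1-e_2$ (your witness $(\uu_1,\uu_2,\uu_3,e_1-e_2)$ is just the $\hbar$-mirror of the paper's $(\vv_1,\vv_2,\vv_3,e_1-e_2)$). The only divergence is minor: for the elements of degree $\le 3$ the paper invokes the algebraic independence of the $n\le 3$ invariants from Corollary~1 of~\cite{zubkov2018}, whereas you give explicit evaluation witnesses in every degree, which is equally valid and slightly more self-contained.
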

\begin{proof}
The description of generators for $\K{n}^{\G}$ from~\cite{zubkov2018} (see Section~\ref{section_known} for the details) together with Lemmas~\ref{lemmaQ},~\ref{lemma1} and formula~\Ref{eq_n} imply that the set $\set{n}^{(4)}$ generates the algebra $\K{n}^{\G}$. By Corollary~1 of~\cite{zubkov2018} and formula~\Ref{eq_n}, the invariants
  $$\tr(i),\; n(Z_i),\; \tr(12),\; \tr(13),\; \tr(23),\; \tr(123),$$
where $1\leq i\leq 3$, are algebraically independent over $\FF$. Thus the required statement is proven for $n\leq3$. 

Assume $n\geq4$. Thus $\set{n}^{(4)}\backslash\{f\}$ is not a generating set for any $f\in \set{n}^{(4)}$ with $\deg(f)\neq 4$.

Assume that $\set{n}^{(4)}\backslash\{\tr(1234)\}$ is a generating set. Then $\tr(1234)$ is a linear combination of $\tr(12)\tr(34)$, $\tr(13)\tr(24)$, $\tr(14)\tr(23)$ and products containing $\tr(i)$ for some $1\leq i\leq 4$. Considering substitutions $Z_1\to  \vv_1$, $Z_2\to \vv_2$, $Z_3\to \vv_3$, $Z_4\to e_1-e_2$ and using equalities $\tr(((\vv_1\vv_2)\vv_3) (e_1-e_2))=-1$ and $\tr(\vv_i (e_1-e_2))=0$ for $1\leq i\leq 3$, we obtain a contradiction. The proposition is proven. 
\end{proof}

\begin{remark}\label{remark3}
\begin{enumerate}
\item[1.] By~\Ref{eq_n}, in the formulation of Proposition~\ref{prop_MGS} we can replace $n(Z_i)$ by $\tr(Z_i^2)$ for all $1\leq i\leq n$.

\item[2.] It easily follows from \tb{the proof of} Proposition~\ref{prop_MGS} (see also Section 1 of~\cite{zubkov2018}) that $\KO{n}^{\G}$ is minimally generated by $\setO{n}^{(4)}$ \tb{when $\Char{\FF}\neq2$}.
\end{enumerate}
\end{remark}

\begin{prop}\label{prop_MSS}
Assume $\Char{\FF}\neq2$. Then $\setO{n}^{(4)}$ and $\set{n}^{(4)}$ are minimal separating sets for  $\KO{n}^{\G}$ and $\K{n}^{\G}$ (respectively) for all $n>0$.
\end{prop}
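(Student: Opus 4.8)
The plan is to prove that $\set{n}^{(4)}$ (and similarly $\setO{n}^{(4)}$) is a separating set by reducing to the generating property established in Proposition~\ref{prop_MGS}, and then to prove minimality by exhibiting, for each element $f$ of $\set{n}^{(4)}$, a pair of points of $\OO^n$ (resp.\ $\OO_0^n$) separated by $f$ but not by $\set{n}^{(4)}\backslash\{f\}$. For the separating property, since $\Char\FF\neq2$ we have $T_n=\K{n}^{\G}$ and, by Proposition~\ref{prop_MGS}, $\set{n}^{(4)}$ generates $\K{n}^{\G}$ as an $\FF$-algebra; a generating set is automatically separating, so there is essentially nothing to prove here beyond invoking the earlier result. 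The analogous statement for $\setO{n}^{(4)}$ and $\KO{n}^{\G}$ follows from part~2 of Remark~\ref{remark3}.

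The real content is minimality. First I would handle the elements of degree $\leq3$: an element $f\in\set{n}^{(4)}$ with $\deg(f)\neq4$ is indecomposable in $\K{n}^{\G}$ (by the argument in the proof of Proposition~\ref{prop_MGS}, the degree-$\leq3$ part of $\set{n}^{(4)}$ is a minimal generating set in those degrees, and in fact the low-degree generators $\tr(i)$, $n(Z_i)$, $\tr(ij)$, $\tr(ijk)$ are algebraically independent when restricted to the relevant three indices). To promote "indecomposable" to "not separating-redundant", I would use the standard fact that if $f$ is an $\NN^n$-homogeneous invariant that is algebraically independent from all other invariants of multidegree $\leq\mdeg(f)$, then removing $f$ destroys the separating property — concretely, one can find two points agreeing on all invariants of smaller or incomparable degree but differing in $f$. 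Since the paper's Corollary~1 of~\cite{zubkov2018} gives the needed algebraic independence for $n\leq3$, and the multidegree bookkeeping localizes everything to at most three or four of the $n$ generic octonions, this extends to all $n>0$.

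For the degree-$4$ elements $\tr(i_1i_2i_3i_4)$ with $i_1<i_2<i_3<i_4$, I would reuse the explicit substitution from the proof of Proposition~\ref{prop_MGS}: set $Z_{i_1}\to\vv_1$, $Z_{i_2}\to\vv_2$, $Z_{i_3}\to\vv_3$, $Z_{i_4}\to e_1-e_2$ (and all other $Z_j\to0$), which gives a point $u$ with $\tr(i_1i_2i_3i_4)(u)=-1$ while all traces $\tr(\vv_a(e_1-e_2))=0$ and $\tr(\vv_a\vv_b)=0$, so that every element of $\set{n}^{(4)}$ of multidegree $\leq(1,1,1,1)$ other than $\tr(i_1i_2i_3i_4)$ vanishes at $u$ (the pairwise products $\tr(ij)$ and the cubic $\tr(ijk)$ on these vectors all vanish, as does $n(\vv_a)$ and $n(e_1-e_2)$ up to checking — one computes $n(e_1-e_2)=-1$, so one must instead pick the fourth substitution more carefully, e.g.\ $Z_{i_4}\to\vv_1'$ for a suitable traceless octonion, or track which products survive). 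Comparing $u$ with the zero point $v=0$, the only element of $\set{n}^{(4)}$ distinguishing them among those of the supporting multidegree is $\tr(i_1i_2i_3i_4)$, so $\set{n}^{(4)}\backslash\{\tr(i_1i_2i_3i_4)\}$ fails to separate $u$ from $v$. For the traceless case one replaces $e_1-e_2$ by an honest traceless octonion and adjusts the computation.

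The main obstacle I anticipate is the bookkeeping in the degree-$4$ minimality step: I must ensure that at the chosen point \emph{every} other member of $\set{n}^{(4)}$ — not just the ones of multidegree exactly $(1,1,1,1)$ on the four relevant indices, but all products and powers of lower invariants of the same or dominated multidegree — takes a value that cannot be combined to reconstruct $\tr(i_1i_2i_3i_4)(u)$. This is really the statement that $\tr(i_1i_2i_3i_4)$ remains indecomposable, which is exactly what the Proposition~\ref{prop_MGS} substitution already shows; the separating-minimality conclusion then follows from the general principle that an indecomposable homogeneous invariant which is moreover not expressible in terms of strictly lower-degree invariants even after localizing the point set cannot be removed from a separating set without loss. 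I would state and cite this principle (it appears in~\cite{kohls2013}, for instance, relating separating sets and indecomposability) rather than reprove it.
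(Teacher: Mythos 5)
Your reduction of the separating property to Proposition~\ref{prop_MGS} is exactly what the paper does, and is fine. The gap is in the minimality argument, and it is twofold. First, the ``general principle'' you invoke --- that an indecomposable homogeneous generator cannot be removed from a separating set --- is false in general; separating sets can be strictly smaller than minimal generating sets, so indecomposability buys you nothing here. The statement that \emph{is} true (and would rescue your degree $\leq 3$ cases) is a transcendence argument: if $f$ is transcendental over the field generated by $\set{n}^{(4)}\backslash\{f\}$, then the latter cannot separate, and for the elements of degree $\leq 3$ the needed algebraic independence is supplied by Corollary~1 of~\cite{zubkov2018} after restricting to at most three indices. But this route is unavailable for the degree-$4$ elements, precisely because $\tr(Z_1Z_2Z_3Z_4)$ is \emph{algebraic} over the other generators: there is a relation of the form $(\tr(1234)+f)^2+h=0$ with $f,h$ polynomials in the lower-degree invariants (the relation $\rel{4}$). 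So indecomposability and transcendence genuinely come apart in degree $4$, and this is exactly the case your proposal leaves unresolved.

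Second, the same relation shows that your ``compare with the zero point'' strategy for the degree-$4$ case cannot be repaired by a cleverer substitution: at any point of $\OO^4$ where all the generators of degree $\leq 3$ vanish, the relation forces $\tr(1234)=0$ as well (in characteristic $\neq 2$). You noticed that $n(e_1-e_2)=-1$ spoils the specific substitution borrowed from Proposition~\ref{prop_MGS}, but the obstruction is structural, not a matter of choosing the fourth octonion more carefully. The paper's proof avoids this entirely by comparing two \emph{nonzero} tuples that differ in a single slot: in the traceless setting, $\un{a}=(\uu_1,\vv_1,c,\uu_2)$ and $\un{b}=(\uu_1,\vv_1,c,-\vv_2)$ with $c=e_1+\uu_2-\vv_2-e_2$, on which every invariant of degree $\leq 3$ agrees while $\tr(((a_1a_2)a_3)a_4)=0$ and $\tr(((b_1b_2)b_3)b_4)=-1$. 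Analogous explicit pairs (for $n=1,2,3$) replace your appeal to the false general principle in the lower degrees, and the whole minimality argument is run in $\OO_0^n$ first and then transported to $\OO^n$. Without an explicit pair of this kind for degree $4$, your proof does not close.
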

\begin{proof} By Proposition~\ref{prop_MGS} and Remark~\ref{remark3}, the sets $\setO{n}^{(4)}$ and $\set{n}^{(4)}$ are separating for $\KO{n}^{\G}$ and $\K{n}^{\G}$ (respectively). For $a=0$, $b=\uu_1+\vv_1$ we have $\tr(a)=n(a)=\tr(b)=0$, but $n(b)=\tb{-1}$. For $a=0$, $b=e_1$ we have $\tr(a)=n(a)=n(b)=0$, but $\tr(b)=1$. Hence, $\set{1}$ is a minimal separating set for $\K{1}^{\G}$. Claims 1, 2, 3 (see below) imply that $\setO{n}^{(4)}$ is a {\it minimal} separating set for $\K{0,n}^{\G}$.  Therefore,   $\set{n}^{(4)}$ is also a minimal separating set for $\K{n}^{\G}$.

\medskip
\noindent{}{\it Claim 1.} Let $n=2$. Then $\setO{2}\backslash \{\tr(X_1 X_2)\}$ is not separating $\KO{2}^{\G}$.
\smallskip

To prove this claim consider $\un{a}=(0,0)$ and $\un{b}=(\uu_1,\vv_1)$ from $\OO_0^2$. Then $\tr(a_1 a_2)\neq \tr(b_1 b_2)$.

\medskip
\noindent{}{\it Claim 2.} Let $n=3$. Then $\setO{3}\backslash \{\tr((X_1 X_2) X_3)\}$ is not separating for $\KO{3}^{\G}$.
\smallskip

To prove this claim we consider $\un{a}=(0,0, 0)$ and $\un{b}=(\vv_1,\vv_2,\vv_3)$ from $\OO_0^3$. Then $\tr(a_i a_j)=\tr(b_i b_j)=0$ for all $1\leq i<j\leq 3$, but $\tr(a_1 a_2 a_3)\neq \tr(b_1 b_2 b_3)$.

\medskip
\noindent{}{\it Claim 3.} Let $n=4$. Then $\setO{4}\backslash \{\tr(((X_1 X_2) X_3) X_4)\}$ is not separating for $\KO{4}^{\G}$.
\smallskip

To prove this claim we consider $\un{a}=(\uu_1,\vv_1,c,\uu_2)$ and $\un{b}=(\uu_1,\vv_1,c,-\vv_2)$ from $\OO_0^4$, where $c=e_1+\uu_2 -\vv_2 - e_2$. Then $\tr(a_i a_4)=\tr(b_i b_4)$ for $1\leq i\leq 3$ and $\tr((a_i a_j) a_4) = \tr((b_i b_j) b_4)$ for $1\leq i<j \leq 3$, but $\tr(((a_1 a_2) a_3) a_4) = 0$ and $\tr(((b_1 b_2) b_3) b_4)=-1$.
\end{proof}

\section{Trace invariants}\label{section_trace}

The group $\GL_2=\GL_2(\FF)$ acts on $M_2^n=M_2(\FF)^{\oplus n}$ diagonally by conjugation. The coordinate ring $\FF[M_2^n]=\FF[z_{i1},z_{i2},z_{i5},z_{i8}\,|\,1\leq i\leq n]$ is also a $\GL_2$-module, where the {\it generic matrices} are
$$\widehat{Z}_i=\matr{z_{i1}}{z_{i2}}{z_{i5}}{z_{i8}}.$$
Note that we consider $\FF[M_2^n]$ as a subalgebra of $K_n$. In~\cite{DKZ02} it was shown that 
\begin{eq}\label{eq_mgs_matrix_inv}
\det(\widehat{Z}_i),\;\; 1\leq i\leq n;\;\; \tr(\widehat{Z}_{i_1}\cdots \widehat{Z}_{i_k}), \;\; 1\leq i_1<\cdots<i_k\leq n,\;\; k>0,
\end{eq}%
is a minimal generating set for $\FF[M_2^n]^{\GL_2}$, where $k\leq 3$ in case $\Char{\FF}\neq2$. In particular, all elements from  set~\Ref{eq_mgs_matrix_inv} are indecomposable. Note that a minimal separating set for $\FF[M_2^n]^{\GL_2}$ was obtained in~\cite{kaygorodov2018}.

Define a surjective homomorphism of $\FF$-algebras $\Psi:\K{n}\to \FF[M_2^n]$ as
follows: $z_{i3}\to 0$, $z_{i4}\to 0$, $z_{i6}\to 0$, $z_{i7}\to 0$ for all $i$. We can naturally extend $\Psi$ to the linear map $\widehat{\Psi}:\OO(\K{n}) \to \OO(\FF[M_2^n])$ by 
$$\widehat{\Psi}\matr{f_1}{(f_2,f_3,f_4)}{(f_5,f_6,f_7)}{f_8} = 
\matr{\Psi(f_1)}%
{(\Psi(f_2),\Psi(f_3),\Psi(f_4))}%
{(\Psi(f_5),\Psi(f_6),\Psi(f_7))}%
{\Psi(f_8)}$$
for $f_1,\ldots,f_8\in K_n$.

For an associative commutative $\FF$-algebra $\algA$ define a map $\mathcal{F}: M_2(\algA) \to \OO(\algA)$ by 
$$\matr{a_1}{a_2}{a_3}{a_4} \to \matr{a_1}{(a_2,0,0)}{(a_3,0,0)}{a_4}$$
for $a_1,\ldots,a_4\in\algA$. It is easy to see that $\mathcal{F}$ is an injective homomorphism of algebras \tb{preserving the trace}, since $(a,0,0)\times (b,0,0)=0$ for all $a,b\in \algA$. In what follows, we consider $\algA=K_n$. \tb{Since the homomorphism $\widehat{\Psi}$ commutes with the trace and the norm, we obtain the following lemma.}

\begin{lemma}\label{lemma_psi}
For all $1\leq i,i_1,\ldots,i_k\leq n$ we have
\begin{enumerate}
\item[(a)] $\widehat{\Psi}(\cdots ((Z_{i_1} Z_{i_2})Z_{i_3})\cdots )Z_{i_k}) = \mathcal{F}(\widehat{Z}_{i_1} \cdots \widehat{Z}_{i_k})$;

\item[(b)] $\Psi(\tr((\cdots ((Z_{i_1} Z_{i_2})Z_{i_3})\cdots )Z_{i_k})) = \tr(\widehat{Z}_{i_1}\cdots\widehat{Z}_{i_k})$;

\item[(c)] $\Psi(n(Z_i)) = \det(\widehat{Z}_i)$.
\end{enumerate}
\end{lemma}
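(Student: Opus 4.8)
The plan is to prove all three parts by tracking how the maps $\widehat{\Psi}$ and $\mathcal{F}$ interact with the algebra structure on octonions. The key observation is that the image of $\mathcal{F}$ inside $\OO(\algA)$ is closed under multiplication: for octonions of the form $\mathcal{F}(M)$ and $\mathcal{F}(M')$, the cross-product terms $\uu\times\uu'$ and $\vv\times\vv'$ appearing in the octonion product vanish because vectors of the shape $(a,0,0)$ always have zero cross product with one another. Hence, writing out the octonion multiplication formula from Section~\ref{section_O} for the two octonions $\mathcal{F}\matr{a_1}{a_2}{a_3}{a_4}$ and $\mathcal{F}\matr{b_1}{b_2}{b_3}{b_4}$, one checks directly that the product equals $\mathcal{F}$ of the matrix product $\matr{a_1}{a_2}{a_3}{a_4}\matr{b_1}{b_2}{b_3}{b_4}$. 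In other words $\mathcal{F}$ is an (injective) algebra homomorphism $M_2(\algA)\to\OO(\algA)$; this is already asserted in the excerpt, so I may simply cite it.

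First I would prove part (a) by induction on $k$. For $k=1$ the statement reads $\widehat{\Psi}(Z_i)=\mathcal{F}(\widehat{Z}_i)$, which is immediate from the definitions of $\widehat{\Psi}$, $\mathcal{F}$, $Z_i$ and $\widehat{Z}_i$ together with $\Psi(z_{i3})=\Psi(z_{i4})=\Psi(z_{i6})=\Psi(z_{i7})=0$. For the inductive step, set $W=(\cdots((Z_{i_1}Z_{i_2})Z_{i_3})\cdots)Z_{i_{k-1}}$, so the word in question is $WZ_{i_k}$. The crucial point is that $\widehat{\Psi}$, being induced entrywise by the ring homomorphism $\Psi$, respects the octonion multiplication on $\OO(\K{n})$: that is, $\widehat{\Psi}(AB)=\widehat{\Psi}(A)\widehat{\Psi}(B)$ for $A,B\in\OO(\K{n})$ — this follows because every entry of $AB$ is a polynomial (with integer, hence $\FF$-, coefficients) in the entries of $A$ and $B$, and $\Psi$ is a ring homomorphism. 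Applying this and the induction hypothesis,
$$\widehat{\Psi}(WZ_{i_k})=\widehat{\Psi}(W)\,\widehat{\Psi}(Z_{i_k})=\mathcal{F}(\widehat{Z}_{i_1}\cdots\widehat{Z}_{i_{k-1}})\,\mathcal{F}(\widehat{Z}_{i_k})=\mathcal{F}(\widehat{Z}_{i_1}\cdots\widehat{Z}_{i_k}),$$
where the last equality is the homomorphism property of $\mathcal{F}$. This proves (a).

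Parts (b) and (c) then follow quickly. For (b), apply the trace: since $\Psi$ commutes with taking trace in the sense that $\Psi(\tr(A))=\tr(\widehat{\Psi}(A))$ for $A\in\OO(\K{n})$ (both sides extract $\Psi(f_1)+\Psi(f_8)$ from the matrix in~\Ref{eq7}-type notation), and since $\mathcal{F}$ preserves the trace (noted in the excerpt: $\tr\mathcal{F}(M)=\tr(M)$), we get from (a) that $\Psi(\tr(Z_{i_1}\cdots Z_{i_k}))=\tr(\widehat{\Psi}(Z_{i_1}\cdots Z_{i_k}))=\tr(\mathcal{F}(\widehat{Z}_{i_1}\cdots\widehat{Z}_{i_k}))=\tr(\widehat{Z}_{i_1}\cdots\widehat{Z}_{i_k})$. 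For (c), similarly $\Psi(n(Z_i))=n(\widehat{\Psi}(Z_i))=n(\mathcal{F}(\widehat{Z}_i))$, and a direct check shows $n\big(\mathcal{F}\matr{a_1}{a_2}{a_3}{a_4}\big)=a_1a_4-a_2a_3=\det\matr{a_1}{a_2}{a_3}{a_4}$, so $\Psi(n(Z_i))=\det(\widehat{Z}_i)$. The only real content is the verification that $\mathcal{F}$ is multiplicative and that $\widehat{\Psi}$ commutes with octonion multiplication, the trace, and the norm; there is no genuine obstacle here, just the routine bookkeeping of writing out the octonion product formula and observing that the vanishing of the relevant cross-products and dot-product cross-terms makes everything reduce to $2\times2$ matrix arithmetic. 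Both facts are essentially stated in the paragraph preceding the lemma, so the proof is short.
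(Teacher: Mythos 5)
Your proposal is correct and follows essentially the same route as the paper's argument: induction on $k$ for part (a) using that $\widehat{\Psi}$ and $\mathcal{F}$ are multiplicative, then compatibility of $\Psi$, $\widehat{\Psi}$, $\mathcal{F}$ with the trace for (b), and with the norm/determinant for (c) (the paper simply computes $\Psi(n(Z_i))=z_{i1}z_{i8}-z_{i2}z_{i5}=\det(\widehat{Z}_i)$ directly, which is the same check). No gaps.
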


Lemma~\ref{lemma_inv}, Lemma~\ref{lemma_psi} and the description of generators of $\FF[M_2^n]^{\GL_2}$ imply that
\begin{eq}\label{eq_psi_image}
\FF[M_2^n]^{\GL_2} \subset \Psi(\K{n}^{\G}),
\end{eq}%
where we have the equality in case $\Char{\FF}\neq2$.

\begin{theo}\label{theo_MGS_T}
In case $\Char{\FF}=2$ the algebra of trace $\G$-invariants  $T_{n}$ is minimally generated by $\set{n}$.
\end{theo}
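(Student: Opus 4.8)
The plan is to prove the two assertions separately: first that $\set{n}$ generates $T_n$, and second that this generating set is minimal. For the generation part, recall that by definition $T_n$ is generated by $n(Z_1),\ldots,n(Z_n)$ together with the traces of all non-associative products of the generic octonions $Z_1,\ldots,Z_n$. Lemma~\ref{lemma1} was proven in the excerpt for the algebra $T_n$ regardless of characteristic (its proof used only equivalences modulo decomposables and the infiniteness of $\FF$), so parts~1 and~2 immediately show that modulo decomposables every trace of a product reduces either to $0$ (if the word is non-multilinear of degree $>2$) or to $\pm\tr(i_1,\ldots,i_k)$ with $i_1<\cdots<i_k$ (if it is multilinear). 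For a non-multilinear word of degree $\le2$ the only case is $\tr(Z_i^2)$, and by~\Ref{eq_n} together with $p=2$ we get $\tr(Z_i^2)={\tr}^2(Z_i)$, a decomposable element. Hence every generator of $T_n$ lies in the subalgebra generated by $\set{n}$, so $\set{n}$ generates $T_n$.

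For minimality, by the criterion recalled in Section~\ref{subsection_ind} it suffices to show that no element of $\set{n}$ is decomposable in $T_n$, i.e., that each $n(Z_i)$ and each $\tr(i_1,\ldots,i_k)$ with $i_1<\cdots<i_k$ has nonzero image in $\ov{T_n}=T_n/(T_n^{+})^2$. The strategy is to use the homomorphism $\Psi:\K{n}\to\FF[M_2^n]$ of the previous section. Since $\Psi$ is a homomorphism of graded algebras, it maps decomposable elements of $T_n$ to decomposable elements of $\Psi(T_n)\subseteq\FF[M_2^n]^{\GL_2}$ (the last inclusion by Lemmas~\ref{lemma_inv},~\ref{lemma_psi}), hence into $(\FF[M_2^n]^{\GL_2,+})^2$. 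By Lemma~\ref{lemma_psi}, $\Psi$ sends $n(Z_i)$ to $\det(\widehat{Z}_i)$ and $\tr(i_1,\ldots,i_k)$ to $\tr(\widehat{Z}_{i_1}\cdots\widehat{Z}_{i_k})$; by the cited result of~\cite{DKZ02}, in characteristic two these are precisely the elements of a minimal generating set~\Ref{eq_mgs_matrix_inv} of $\FF[M_2^n]^{\GL_2}$ for all $k>0$, hence all indecomposable there. Therefore none of the corresponding elements of $\set{n}$ can be decomposable in $T_n$: a decomposition in $T_n$ would push forward under $\Psi$ to a decomposition in $\FF[M_2^n]^{\GL_2}$, contradicting indecomposability of the matrix invariant. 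This completes minimality.

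The main obstacle is the subtle point in the minimality argument that a relation expressing, say, $\tr(i_1,\ldots,i_k)$ as a polynomial in lower-degree elements of $T_n$ really does map to a genuine decomposition in $\FF[M_2^n]^{\GL_2}$. One must check that $\Psi$ respects the $\NN$- and $\NN^n$-gradings (clear, since $\Psi$ is defined by sending some variables to $0$), that $\Psi(T_n^{+})\subseteq\FF[M_2^n]^{\GL_2,+}$ (clear from Lemma~\ref{lemma_psi} and degree considerations), and hence $\Psi((T_n^{+})^2)\subseteq(\FF[M_2^n]^{\GL_2,+})^2$. A secondary subtlety: one should confirm that for $k=1$ and $k=2$ the elements $\tr(Z_i)$ and $\tr(Z_iZ_j)$ ($i<j$) are genuinely in $\set{n}$ and that their $\Psi$-images $\tr(\widehat{Z}_i)$ and $\tr(\widehat{Z}_i\widehat{Z}_j)$ are among the minimal generators~\Ref{eq_mgs_matrix_inv} — which they are, since in~\cite{DKZ02} the bound $k\le3$ applies only when $\Char\FF\neq2$, and here $\Char\FF=2$. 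Once these compatibility checks are in place, the argument is routine; no delicate calculation beyond invoking Lemma~\ref{lemma1},~\Ref{eq_n}, Lemma~\ref{lemma_psi}, and~\Ref{eq_mgs_matrix_inv} is needed.
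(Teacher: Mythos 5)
Your proposal is correct and follows essentially the same route as the paper: generation of $T_n$ by $\set{n}$ via Lemma~\ref{lemma1} and formula~\Ref{eq_n}, and minimality by pushing a hypothetical decomposition forward under $\Psi$ (Lemma~\ref{lemma_psi}) to contradict the indecomposability of the minimal generators~\Ref{eq_mgs_matrix_inv} of $\FF[M_2^n]^{\GL_2}$ from~\cite{DKZ02}. The compatibility checks you spell out (gradedness of $\Psi$, $\Psi(T_n^{+})\subseteq\FF[M_2^n]^{\GL_2,+}$, and the absence of the bound $k\leq 3$ in characteristic two) are exactly the implicit ingredients of the paper's argument.
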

\begin{proof}By Lemma~\ref{lemma1} and formula~\Ref{eq_n}, the algebra $T_{n}$ is generated by $\set{n}$. To show that $\set{n}$ is a minimal generating set, it is enough to prove that every element $f\in \set{n}$ is indecomposable in $T_n$. Assume the contrary. If $f=\tr((\cdots ((Z_{i_1} Z_{i_2})Z_{i_3})\cdots )Z_{i_k})$
\tb{from $S_n$} \tb{were decomposable in $T_{n}$}, then by parts (b), (c) of Lemma~\ref{lemma_psi}, $\Psi(f)=\tr(\widehat{Z}_{i_1}\cdots \widehat{Z}_{i_k})$ \tb{would be} decomposable in $\FF[M_2^n]^{\GL_2}$; a contradiction. Similarly, if $f=n(Z_i)$ \tb{were decomposable in $T_{n}$}, then $\Psi(f)=\det(\widehat{Z}_{i})$ \tb{would be} decomposable in $\FF[M_2^n]^{\GL_2}$; a contradiction.
\end{proof}

\section{Subalgebras of $\OO$ of low dimension}\label{section_subalgebras}

\tb{The group $\G$ acts naturally on the set of subalgebras of $\OO$. For a subalgebra $\algA$ of $\OO$ we denote by $[\algA]$ the $\G$-orbit of $\algA$ and we say that $[\algA]$ is the equivalence class of $\algA$. Obviously, all algebras in $[\algA]$ are isomorphic to $\algA$. Denote by $\Omega(\OO)$ the set of $\G$-orbits (i.e., equivalence classes) in the set of subalgebras of $\OO$.}
Since all algebras from a given equivalence class $\mathfrak{A}\in \Omega(\OO)$ have the same dimension, we call it the \emph{dimension} of $\mathfrak{A}$. A set of (linearly independent) octonions is said to be a \emph{basis} of $\mathfrak{A}$, provided they form a basis of \tb{an} algebra from $\mathfrak{A}$. An equivalence class   $\mathfrak{A}\in\Omega(\OO)$ is called {\it closed} if there exists a subalgebra $\algA$ of $\OO$ with $[\algA]=\mathfrak{A}$ and there is an $\FF$-basis $a_1,\ldots,a_n$ of $\algA$ such that the $\G$-orbit of $(a_1,\ldots,a_n)$ is closed in $\OO^n$. 
More details on the definition of a closed equivalence class can be found in Remark~\ref{remak_closed} (see below). Denote by
$$\MM=\matr{\ast}{(\ast,0,0)}{(\ast,0,0)}{\ast}\;\;\text{ and }\;\; \SE=\matr{\ast}{(\ast,\ast,0)}{(\ast,0,\ast)}{\ast},$$
respectively, the subalgebra of {\it quaternions} and {\it sextonions} of $\OO$, respectively, where the term {\it sextonions} was introduced in~\cite{Landsberg_2006}. Note that $\mathcal{F}:M_2(\FF)\to\MM$ is an isomorphism of $\FF$-algebras (see Section~\ref{section_trace} for the details).

The main result of this section is the following statement. 

\begin{prop}\label{prop_subalg}
Assume $\Char{\FF}=2$ and an equivalence class $\mathfrak{A}\in\Omega(\OO)$ has dimension $d\leq3$. Then one of the following sets is a basis for $\mathfrak{A}$: 
\begin{enumerate}
\item[$d=1${\rm:}] $\{1_{\OO}\}$, $\{\uu_1\}$, $\{e_1\}$;

\item[$d=2${\rm:}] $\{1_{\OO},\uu_1\}$, $\{\uu_1,\vv_2\}$, $\{e_1,\uu_1\}$, $\{e_1,\vv_1\}$, $\{e_1,e_2\}$;

\item[$d=3${\rm:}] $\{1_{\OO},\uu_1,\vv_2\}$, $\{e_1,e_2,\uu_1\}$, $\{e_1,\uu_1,\vv_2\}$, \tb{$\{\uu_1,\vv_2,\vv_3\}$}.
\end{enumerate}
\end{prop}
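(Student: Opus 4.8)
The plan is to classify all subalgebras of $\OO$ of dimension $\leq 3$ up to the $\G$-action by induction on the dimension $d$, using at each step the detailed structure of the octonion multiplication together with the transitivity properties of $\G$ on suitable configurations of octonions. First I would treat $d=1$: a one-dimensional subalgebra $\FF a$ is spanned by an element $a$ with $a^2\in\FF a$; by the quadratic relation~\Ref{eq2}, this forces $a^2=\tr(a)\,a$ (so $n(a)=0$) or $a\in\FF 1_{\OO}$. If $a\in\FF 1_{\OO}$ we get $\{1_{\OO}\}$. Otherwise $a$ is a nonzero element with $n(a)=0$; here one considers two cases according to whether $\tr(a)=0$ or $\tr(a)\neq 0$ (recall $\Char\FF=2$, so $\tr(a)\neq 0$ means we may scale to $\tr(a)=1$). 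Using that $\G=\Aut(\OO)$ acts transitively on nonzero isotropic traceless octonions (this is standard and can be extracted from the generators $\SL_3$, $\de_1,\de_2$ and $\hbar$ described in Section~\ref{section_G2}), one reduces $\tr(a)=0$, $n(a)=0$, $a\neq 0$ to the orbit of $\uu_1$; and $\tr(a)=1$, $n(a)=0$ to the orbit of an idempotent, which by a similar transitivity argument is the orbit of $e_1$ (since $a^2=\tr(a)a=a$). This yields exactly $\{1_{\OO}\}$, $\{\uu_1\}$, $\{e_1\}$.

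For $d=2$, I would take a subalgebra $\algA$ and analyze it according to which $d=1$ subalgebras it contains. If $1_{\OO}\in\algA$, then $\algA=\FF1_{\OO}\oplus\FF b$ for some traceless $b$ (using $\Char\FF=2$ only mildly, or adjusting $b$ by a scalar multiple of $1_{\OO}$ in odd characteristic—but here $p=2$), and $b^2\in\algA$ forces $n(b)\in\FF$ automatically, so the only constraint is the orbit of $b$; this splits into $b$ nilpotent ($\to\{1_{\OO},\uu_1\}$) or ... wait, if $b$ is traceless with $n(b)\neq 0$ then $b^2=-n(b)\,1_{\OO}$ and $\FF1_{\OO}\oplus\FF b$ is a (2-dimensional étale or the field extension) algebra; but over an algebraically closed field any such $b$ with $n(b)\neq 0$ can be rescaled and, since $\OO$ has isotropic vectors, one shows this orbit also contains an isotropic representative—actually the cleaner route is: the 2-dimensional subalgebras containing $1_{\OO}$ are determined by the restriction of the norm form to $b$, and transitivity of $\G$ on the relevant $b$'s reduces to $\{1_{\OO},\uu_1\}$. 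If $1_{\OO}\notin\algA$ but $\algA$ contains an idempotent $e$, conjugate so $e=e_1$; then $\algA$ is $e_1$-invariant, and decomposing $\OO$ into Peirce components relative to $e_1$ (namely $\FF e_1$, $\FF e_2$, the $\uu$-space, the $\vv$-space) one checks the possible complements, getting $\{e_1,\uu_1\}$, $\{e_1,\vv_1\}$, $\{e_1,e_2\}$. The remaining case is a $2$-dimensional subalgebra with no idempotent and not containing $1_{\OO}$: then every element is nilpotent or has trace $1$ with... a short argument (using that a $2$-dimensional subalgebra with all elements nilpotent, or the trace form degenerate, forces the basis $\{\uu_1,\vv_2\}$, noting $\uu_1\vv_2=0$ in the octonion multiplication). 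This gives the five listed bases.

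For $d=3$, I would similarly stratify by the $d=2$ and $d=1$ subalgebras contained in $\algA$. The case $1_{\OO}\in\algA$ reduces to a $2$-dimensional traceless complement, and analyzing the multiplication on that complement (a traceless $2$-space closed under the symmetric product given by~\Ref{eq3}) yields $\{1_{\OO},\uu_1,\vv_2\}$. If $\algA$ contains an idempotent, conjugate to $e_1$ and use the Peirce decomposition again: the $e_1$-invariant $3$-dimensional subalgebras are $\{e_1,e_2,\uu_1\}$ and $\{e_1,\uu_1,\vv_2\}$ (one checks which combinations of Peirce components close up multiplicatively, using $\uu_i\uu_j,\vv_i\vv_j\in\vv,\uu$-spaces and $\uu_i\vv_j$-products landing in $\FF e_1+\FF e_2$). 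The last case, no idempotent and $1_{\OO}\notin\algA$: then $\algA$ consists of nilpotents (plus the structure forces it), and using the description of totally isotropic $\G$-invariant flags—specifically that $\G$ acts transitively on the $3$-dimensional totally singular $\uu$-type subspaces with trivial multiplication—one lands on $\{\uu_1,\vv_2,\vv_3\}$ (note $\uu_1\vv_2=\uu_1\vv_3=\vv_2\vv_3=0$, so this is a zero-multiplication algebra).

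\textbf{Main obstacle.} The hard part will be the transitivity statements: at each dimension one must show that the abstract structural possibilities collapse to a \emph{single} $\G$-orbit for each "type", which requires explicit use of the generators $\SL_3$, $\de_1(t\uu_i)$, $\de_2(t\vv_i)$ and $\hbar$ to move an arbitrary adapted basis to the listed normal form. In particular, the cases "$\algA$ contains no idempotent and not $1_{\OO}$" in dimensions $2$ and $3$—where one must argue that a subalgebra of purely nilpotent elements is automatically one of the zero-product algebras $\{\uu_1,\vv_2\}$, $\{\uu_1,\vv_2,\vv_3\}$—rely on the fact that the norm form restricted to $\OO_0$ and the explicit multiplication table constrain such subspaces to be totally singular with vanishing product, and then one needs the precise orbit structure of totally singular subspaces of the $\uu,\vv$-decomposition under $\SL_3\ltimes(\de_1,\de_2)$. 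The characteristic-two hypothesis enters because in odd characteristic one would have $1_{\OO}\in\algA$ as soon as $\algA$ contains any non-nilpotent non-idempotent element (by splitting off the trace), whereas in characteristic two the trace and norm interact differently (see~\Ref{eq_n}), so extra care is needed to enumerate the non-unital, non-idempotent subalgebras; keeping the bookkeeping of these degenerate cases straight is the real work.
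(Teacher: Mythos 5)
Your outline differs from the paper's route (the paper splits into traceless and non-traceless subalgebras and leans on the canonical-form classifications of single octonions and of pairs of traceless octonions from \cite{LZ_1}, i.e.\ Propositions~\ref{prop_ThI} and~\ref{prop_ThII}, together with Remark~\ref{remark_Can1.2} and Lemma~\ref{lemma_Grishkov}, working through Lemmas~\ref{lemma1_algO_2}--\ref{lemma3_algO_1}), but as it stands it has a genuine gap on two levels. First, every orbit-transitivity claim you invoke (``$\G$ is transitive on nonzero isotropic traceless octonions'', ``on idempotents'', ``on totally singular $3$-spaces with zero multiplication'', the Peirce-component bookkeeping collapsing to single orbits) is exactly the content of the proposition; you defer all of it to ``explicit use of the generators'', which is precisely the work the paper does via the cited canonical forms for octonions and for \emph{pairs} of traceless octonions. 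Without a substitute for Proposition~\ref{prop_ThII} (or an equivalent normal-form statement for pairs), the $d=2$ and $d=3$ cases are not proved, only asserted.

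Second, and more concretely, your basic case reduction is wrong in characteristic two. You claim that $1_{\OO}\in\algA$ forces $\algA=\FF 1_{\OO}\oplus\FF b$ with $b$ traceless (and, for $d=3$, a two-dimensional traceless complement). Since $\tr(1_{\OO})=2=0$, adding multiples of $1_{\OO}$ does not change the trace, so no such complement need exist: $\algA=\FF e_1\oplus\FF e_2$ contains $1_{\OO}=e_1+e_2$ but has traceless part $\FF 1_{\OO}$, and $\FF e_1\oplus\FF e_2\oplus\FF\uu_1$ contains $1_{\OO}$ with traceless part $\FF 1_{\OO}+\FF\uu_1$. (Your parenthetical has the characteristics reversed: it is in odd characteristic that one subtracts $\tfrac{1}{2}\tr(b)1_{\OO}$.) Consequently your ``$1_{\OO}\in\algA$'' branches conclude only $\{1_{\OO},\uu_1\}$ and $\{1_{\OO},\uu_1,\vv_2\}$, misclassifying $\{e_1,e_2\}$ and $\{e_1,e_2,\uu_1\}$, and your later branch hypothesized on $1_{\OO}\notin\algA$ nevertheless lists $\{e_1,e_2\}$ --- an internal inconsistency. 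The missing classes do appear in your final list, but only because the case division is not carried out coherently; repairing it requires exactly the characteristic-two analysis (e.g.\ producing an idempotent from $b$ with $\tr(b)\neq 0$ via solving $\lambda^2+\lambda+n(b)=0$, and then redoing the orbit analysis relative to $e_1$) that the paper performs separately for $\algA\subset\OO_0$ and $\algA\not\subset\OO_0$.
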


Note that we do not require for a subalgebra of $\OO$ to be unital. The proof of Proposition~\ref{prop_subalg} will be given in \tb{a} series of propositions and lemmas, which are interesting on their \tb{own}.

\begin{prop}[Proposition 3.4 of \cite{LZ_1}]\label{prop_ThI}
For each $a\in\OO$ there exists $g\in\G$ such that $g a$ is a canonical octonion of one of the following types: 
\begin{enumerate}
\item[{\rm(D)}] $\matr{\al_1}{\zero}{\zero}{\al_8}$,

\item[{\rm($\mathrm{K}_1$)}] $\matr{\al_1}{(1,0,0)}{\zero}{\al_1}$,
\end{enumerate}
for some $\al_1,\al_8\in\FF$.  Moreover, these canonical octonions are unique modulo permutation $\al_1 \leftrightarrow \al_8$ for type {\rm(D)}.
\end{prop}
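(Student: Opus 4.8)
The plan is to read off the canonical form from the $2$-dimensional commutative associative subalgebra generated by $a$. The functions $\tr$ and $n$ are $\G$-invariant, and by~\Ref{eq2} every $a\in\OO$ satisfies $a^2-\tr(a)a+n(a)1_{\OO}=0$, so the unital subalgebra $\FF 1_{\OO}+\FF a$ has dimension at most $2$. This splits the problem into three $\G$-stable cases according to the isomorphism type of $\FF[a]$: (i) $a\in\FF 1_{\OO}$; (ii) $\FF[a]\cong\FF\times\FF$, i.e.\ the characteristic polynomial $x^2-\tr(a)x+n(a)$ has two distinct roots; (iii) $\FF[a]\cong\FF[\varepsilon]/(\varepsilon^2)$, i.e.\ a repeated root with $a\notin\FF 1_{\OO}$. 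Since $\G$ fixes $1_{\OO}$ and preserves $\tr,n$, I expect these to produce, respectively, a scalar form of type (D), a non-scalar diagonal form of type (D), and a form of type ($\mathrm{K}_1$).

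\textbf{Existence.} In case (i), $a=\al_1 1_{\OO}$ is already of type (D) with $\al_1=\al_8$. In case (ii) let $f_1,f_2$ be the primitive idempotents of $\FF[a]$, so $f_1+f_2=1_{\OO}$, $f_1f_2=0$ and $a=\la_1 f_1+\la_2 f_2$ with $\la_1\neq\la_2$ (else $a$ is scalar). It then suffices to show that every pair of complementary nonzero idempotents is carried by some $g\in\G$ to $(e_1,e_2)$; applying such $g$ gives $ga=\matr{\la_1}{\zero}{\zero}{\la_2}$, of type (D). In case (iii) write $a=\la 1_{\OO}+b$, where $\la$ is the repeated root and $b=a-\la 1_{\OO}\neq0$ satisfies $b^2=0$; by~\Ref{eq2} this forces $\tr(b)=n(b)=0$, and conversely for traceless $b$ one has $b^2=-n(b)1_{\OO}$, so $b$ is a nonzero traceless octonion with $n(b)=0$. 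It then suffices to show every such $b$ is carried by some $g\in\G$ to $\uu_1$; applying $g$ gives $ga=\la 1_{\OO}+\uu_1=\matr{\la}{\cc_1}{\zero}{\la}$, of type ($\mathrm{K}_1$).

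\textbf{The two transitivity statements (main obstacle).} Everything now rests on two orbit computations, which I would carry out with the explicit generators $\SL_3$, $\de_1(\cdot),\de_2(\cdot)$ and $\hbar$ of $\G$. For idempotents, \Ref{eq2} shows any nontrivial idempotent $f$ has $\tr(f)=1$, $n(f)=0$, so writing $f=\matr{\al}{\uu}{\vv}{1-\al}$ one gets $\uu\cdot\vv=\al(1-\al)$; I would use $\SL_3$ to normalize $(\uu,\vv)$ and then $\de_1,\de_2,\hbar$ to clear $\uu,\vv$ and adjust the diagonal, reaching $e_1$ (with complement $e_2$). For the null case, $b=\matr{\al}{\uu}{\vv}{-\al}$ with $\uu\cdot\vv=-\al^2$; since $\SL_3$ acts transitively on nonzero vectors of $\FF^3$ and preserves $\uu\cdot\vv$, one normalizes $\uu$ (or $\vv$) to $\cc_1$ and then uses the unipotent generators $\de_1,\de_2$ together with $\hbar$ to annihilate $\vv$ and the diagonal entry $\al$, arriving at $\uu_1$. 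The hard part is the null/square-zero case in characteristic two: there $1_{\OO}\in\OO_0$ lies in the radical of the bilinear form $q$ (since $q(1_{\OO},a')=\tr(a')$), so $q$ degenerates on $\OO_0$ and the transitivity of an orthogonal group on a punctured null cone that one would invoke in odd characteristic is not directly available; the reduction must be pushed through by hand with the explicit generators.

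\textbf{Uniqueness.} This follows from the invariance of $\tr$ and $n$. For type (D) one has $\tr=\al_1+\al_8$ and $n=\al_1\al_8$, so $\{\al_1,\al_8\}$ is the unordered set of roots of $x^2-\tr\,x+n$ and is determined by the invariants, giving uniqueness up to the swap $\al_1\leftrightarrow\al_8$. For type ($\mathrm{K}_1$) one has $\tr=2\al_1$ and $n=\al_1^2$, which determines $\al_1$ (in characteristic two $\tr=0$ and $\al_1=\sqrt{n}$ is unique because the Frobenius map is injective). Finally, a type-(D) and a type-($\mathrm{K}_1$) form can share the same $(\tr,n)$ only when the characteristic polynomial has a repeated root $\la$, in which case the type-(D) form is the scalar $\la 1_{\OO}$; since $\G$ fixes $1_{\OO}$, the orbit of $\la 1_{\OO}$ is the single point $\{\la 1_{\OO}\}$, whereas $\la 1_{\OO}+\uu_1\neq\la 1_{\OO}$, so the two are never $\G$-equivalent. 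This both separates the two families and completes the uniqueness statement.
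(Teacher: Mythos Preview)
The paper does not give its own proof of this proposition: it is quoted verbatim as Proposition~3.4 of~\cite{LZ_1} and used as a black box. So there is nothing in the present paper to compare your argument against line by line.

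That said, your strategy is the standard and correct one. Reducing via~\Ref{eq2} to the three cases (scalar, split \'etale, dual numbers) is exactly right, and the uniqueness argument via the invariance of $\tr$ and $n$ is clean and complete, including the separation of the two types in the repeated-root case.

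The genuine gap is that the two transitivity claims are only sketched. You say you ``would'' normalize with $\SL_3$ and then clear entries with $\de_1,\de_2,\hbar$, but you do not actually carry this out, and you explicitly flag the characteristic-two null case as something that ``must be pushed through by hand.'' That is precisely the content of the proposition, so as written this is a plan rather than a proof. For the idempotent case you could shortcut via Lemma~\ref{lemma_Grishkov}: once you place the idempotent inside a copy of $\MM\cong M_2(\FF)$, the $\GL_2$-conjugacy of rank-one idempotents lifts to $\G$. For the null case, the honest route is the explicit reduction with the generators (this is what~\cite{LZ_1} does); your outline is correct---normalize $\uu$ (or $\vv$, using $\hbar$ if $\uu=\zero$) to $\cc_1$ via $\SL_3$, then kill the remaining entries with $\de_1,\de_2$---but the steps need to be written down and checked, especially in characteristic two where, as you note, no orthogonal-group transitivity argument is available.
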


\begin{prop}[Corollary 5.2 of \cite{LZ_1}]\label{prop_ThII}
Assume $\Char{\FF}=2$. For each $(a,b)\in\OO_0^2$ there exists $g\in\G$ such that $g (a,b)$ is a pair of one of the following types: 
\begin{enumerate}
\item[{\rm (EE)}] $(\al_1 1_{\OO},\be_1 1_{\OO})$,

\item[{\rm($\rm E K_1$)}] 
$\left(\al_1 1_{\OO}, \matr{\be_1}{(1,0,0)}{\zero}{\be_1}\right)$,

\item[{\rm($\rm K_1 E$)}] 
$\left(\matr{\al_1}{(1,0,0)}{\zero}{\al_1}, \be_1 1_{\OO} \right)$,

\item[{\rm($\rm K_1 L_1$)}] 
$\left(\matr{\al_1}{(1,0,0)}{\zero}{\al_1}, \matr{\be_1}{(\be_2,0,0)}{\zero}{\be_1} \right)$ with $\be_2\neq 0$,

\item[{\rm($\rm K_1 L_1^{\top}$)}] 
$\left(\matr{\al_1}{(1,0,0)}{\zero}{\al_1}, \matr{\be_1}{\zero}{(\be_5,0,0)}{\be_1} \right)$ with $\be_5\neq 0$,

\item[{\rm($\rm K_1 M_1$)}] 
$\left(\matr{\al_1}{(1,0,0)}{\zero}{\al_1}, \matr{\be_1}{(0,1,0)}{\zero}{\be_1} \right)$,

\item[{\rm($\rm K_1 M_1^{\top}$)}] 
$\left(\matr{\al_1}{(1,0,0)}{\zero}{\al_1}, \matr{\be_1}{\zero}{(0,1,0)}{\be_1} \right)$,
\end{enumerate}
where $\al_1,\be_1,\be_2,\be_5\in\FF$.
\end{prop}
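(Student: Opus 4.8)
The plan is to normalize the first octonion by Proposition~\ref{prop_ThI} and then use its $\G$-stabilizer to normalize the second. The key preliminary, special to $\Char\FF=2$, is that every $a\in\OO_0$ satisfies $a^2=\tr(a)a-n(a)=n(a)1_{\OO}$ by~\Ref{eq2}; if $\al_1$ denotes the unique square root of $n(a)$ in the algebraically closed field $\FF$, then $(a-\al_1 1_{\OO})^2=a^2+\al_1^2 1_{\OO}=0$, so $a$ splits canonically as a scalar plus a square-zero octonion. Consequently Proposition~\ref{prop_ThI}, applied to $a\in\OO_0$, yields exactly two canonical forms: the scalar $a=\al_1 1_{\OO}$ (type $\mathrm E$) and $a=\al_1 1_{\OO}+\uu_1$ (type $\mathrm K_1$), according as the square-zero part vanishes or not. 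After replacing $(a,b)$ by $g(a,b)$ for a suitable $g\in\G$ we may thus assume $a$ is one of these, which splits the proof into two branches.

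In the branch $a=\al_1 1_{\OO}$, the element $a$ is a scalar multiple of $1_{\OO}$ and is therefore fixed by every automorphism of $\OO$; the full group $\G$ still acts on the second slot, so I would simply apply Proposition~\ref{prop_ThI} to $b$. This produces $b=\be_1 1_{\OO}$ or $b=\matr{\be_1}{(1,0,0)}{\zero}{\be_1}$, i.e.\ precisely the pairs $\mathrm{(EE)}$ and $\mathrm{(EK_1)}$, and this branch is immediate.

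The branch $a=\al_1 1_{\OO}+\uu_1$ carries all the difficulty. Since $1_{\OO}$ is $\G$-fixed, $\Stab_\G(a)=\Stab_\G(\uu_1)=:H$, and the problem becomes the classification of the square-zero octonions $N=b-\be_1 1_{\OO}$ (with $\be_1=\sqrt{n(b)}$) up to the action of $H$. I would first pin down $H$ through the generators of $\G$: the intersection $\SL_3\cap H$ is the parabolic $P$ of matrices whose first row is $(1,0,0)$, whose Levi $\SL_2$ acts in the standard way on $\langle\uu_2,\uu_3\rangle$ --- transitively on nonzero vectors --- and dually on $\langle\vv_2,\vv_3\rangle$, while its diagonal torus $\theta_{\un{\la}}$ with $\la_1=0$ scales $\uu_2$ and $\vv_2$ but fixes $\uu_1,\vv_1$; to $P$ one adjoins the root elements $\de_1(t\uu_i),\de_2(t\vv_i)$ that preserve $\uu_1$ and allow transfer between the $\uu$- and $\vv$-components. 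Using these, I would sweep a general square-zero $N$ into normal form: the $\SL_2$-part concentrates the $\uu_2,\uu_3$- and $\vv_2,\vv_3$-components onto the single indices, the torus scales them to $1$, and the unipotent radical of $P$ together with the $\de_i$ clear the remaining cross terms, leaving $N$ equal to $0$, to a multiple of $\uu_1$, to a multiple of $\vv_1$, to $\uu_2$, or to $\vv_2$ --- exactly the cases $\mathrm{(K_1E)}$, $\mathrm{(K_1L_1)}$, $\mathrm{(K_1L_1^\top)}$, $\mathrm{(K_1M_1)}$, $\mathrm{(K_1M_1^\top)}$. The surviving parameters $\be_2,\be_5$ appear exactly because the torus of $H$ fixes $\uu_1$ and $\vv_1$ and so cannot rescale their coefficients, whereas $\uu_2,\vv_2$ lie in the $\SL_2$-directions and are normalized away. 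The labour can be halved by the symmetry $\hbar$ of~\Ref{eq_h}, which interchanges the $\uu$- and $\vv$-coordinates and pairs $\mathrm{(K_1L_1)}\leftrightarrow\mathrm{(K_1L_1^\top)}$ and $\mathrm{(K_1M_1)}\leftrightarrow\mathrm{(K_1M_1^\top)}$.

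Finally, to confirm that the five forms are pairwise inequivalent I would separate them by the octonionic data of the pair: from $ab=\al_1\be_1 1_{\OO}+\al_1 N+\be_1\uu_1+\uu_1 N$ one gets $\tr(ab)=\tr(\uu_1 N)$, and since $\uu_1\vv_1=e_1$, $\uu_1\uu_2=\vv_3$ and $\uu_1\vv_2=0$ one finds $\tr(ab)\neq0$ only for $\mathrm{(K_1L_1^\top)}$, the product $\uu_1 N\neq0$ only for $\mathrm{(K_1L_1^\top)}$ and $\mathrm{(K_1M_1)}$, and the remaining cases $\mathrm{(K_1E)}$, $\mathrm{(K_1L_1)}$, $\mathrm{(K_1M_1^\top)}$ are told apart by whether $N$ is zero, proportional to $\uu_1$, or linearly independent from $\uu_1$ (equivalently, by the dimension of the subalgebra generated by $a$ and $b$). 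The main obstacle is obtaining a workable description of $\Stab_\G(\uu_1)$ and verifying that its orbits on the square-zero octonions are exactly these five families; this is an unavoidable and somewhat delicate computation with the explicit generators of $\G$, while the scalar-plus-nilpotent reduction and the separation of cases are routine.
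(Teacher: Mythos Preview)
The paper does not prove this proposition; it is quoted verbatim as Corollary~5.2 of~\cite{LZ_1}, so there is no in-paper argument to compare against. Your overall plan --- normalize the first coordinate by Proposition~\ref{prop_ThI}, then classify the second coordinate under the stabilizer --- is the natural one and almost certainly the approach of~\cite{LZ_1} as well. The scalar branch and the preliminary decomposition $a=\al_1 1_{\OO}+N$ with $N^2=0$ are correct and cleanly handled.

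Two points deserve attention in the $\mathrm K_1$ branch. First, your description of $H=\Stab_{\G}(\uu_1)$ is slightly off: not all the root elements $\de_1(t\uu_i),\de_2(t\vv_i)$ lie in $H$. A direct check with the formulas in Section~\ref{section_G2} shows that among these only $\de_1(t\uu_1)$, $\de_2(t\vv_2)$, $\de_2(t\vv_3)$ fix $\uu_1$; the others move it. You will need the correct list when you actually carry out the orbit computation. Second, your appeal to $\hbar$ to ``halve the labour'' does not work as stated: $\hbar$ sends $\uu_1$ to $\vv_1$, so it takes you out of the $\mathrm K_1$ normal form for $a$ and does not pair $(\mathrm{K_1M_1})$ with $(\mathrm{K_1M_1^{\top}})$ inside the same stabilizer. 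Any symmetry you exploit must live in $H$ itself.

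The final paragraph on pairwise inequivalence is unnecessary: the proposition asserts only the existence of a normal form, not uniqueness. If you keep it, note that ``$\uu_1 N=0$'' is not a $\G$-invariant condition in general; it becomes one only after you have reduced to $g\in H$, which you should say explicitly. The genuine work, as you correctly identify, is the explicit $H$-orbit classification of square-zero traceless octonions --- your sketch is plausible but would need the detailed case analysis to stand as a proof.
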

\medskip

\begin{remark}[see Remarks 3.1, 3.2 and 3.3 of \cite{LZ_1}]\label{remark_Can1.2}
Assume $a=\matr{\al}{\uu}{\vv}{\be}\in\OO$. Then 
\begin{enumerate}
\item[(a)] if $\uu\neq 0$, then there exists $g\in\SL_3$ such that 
$g a = \matr{\al}{(1,0,0)}{\vv'}{\be}$, where $\vv'=(\ast,0,0)$ or $\vv'=(0,1,0)$;

\item[(b)] if $\vv\neq 0$, then there exists $g\in\SL_3$ such that 
$g a = \matr{\al}{\uu'}{(1,0,0)}{\be}$, where $\uu'=(\ast,0,0)$ or $\uu'=(0,1,0)$.

\item[(c)] there exist $g,g',g''\in\SL_3$ such that  
$$g(\uu_1,\vv_1,\uu_2,\vv_3) = (\uu_1,\vv_1,\uu_3,-\vv_2),$$
$$g'(\uu_2,\vv_2,\uu_1,\vv_3) = (\uu_2,\vv_2,\uu_3,-\vv_1),$$
$$g''(\uu_3,\vv_3,\uu_1,\vv_2) = (\uu_3,\vv_3,\uu_2,-\vv_1).$$

\item[(d)]  if $\uu=(\ga_1,\ga_2,\ga_3)$ with $\ga_2\neq0$ or $\ga_3\neq0$ and $\vv=(\de,0,0)$, then there exists $g\in\SL_3$ such that  $g a= \matr{\al}{(\ga_1, 1, 0)}{(\de, 0, 0)}{\be}$ and \tb{$g(\uu_1,\vv_1)=(\uu_1,\vv_1)$}.

\item[(e)] \tb{if $\vv=(\ga_1,\ga_2,\ga_3)$ with $\ga_2\neq0$ or $\ga_3\neq0$ and $\uu=(\de,0,0)$, then there exists $g\in\SL_3$ such that  $g a= \matr{\al}{(\de, 0, 0)}{(\ga_1, 1, 0)}{\be}$ and $g(\uu_1,\vv_1)=(\uu_1,\vv_1)$}.
\end{enumerate}
\end{remark}

\tb{The following lemma is an immediate consequence of the Cayley-Dickson doubling process (see also Section 2.1 of~\cite{Springer_Veldkamp_2000}). } Its analogue over a finite field is part (ii) of Lemma 3.3 from~\cite{Grishkov_2010}.
\begin{lemma}\label{lemma_Grishkov} Every automorphism of the $\FF$-algebra $\MM$ can be extended to an automorphism of the algebra $\OO$.
\end{lemma}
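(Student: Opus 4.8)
The plan is to use the Cayley--Dickson doubling process to embed $\OO$ inside a tower of algebras built on top of $\MM$, and then observe that an automorphism of $\MM$ automatically respects all the extra structure needed to extend it. First I would recall that $\MM\cong M_2(\FF)$ is a (split) quaternion algebra, so it carries its own norm, trace and conjugation, and these are intrinsic: the conjugation on $\MM$ is $x\mapsto \tr(x)1-x$ and the norm is $n(x)=x\bar x$, both expressed purely in terms of multiplication, hence preserved by any algebra automorphism $\varphi$ of $\MM$. Next I would recall the explicit realization of $\OO$ as the Cayley--Dickson double $\MM\oplus \MM w$ of $\MM$, with multiplication $(a+bw)(c+dw)=(ac+\lambda\bar d b)+(da+b\bar c)w$ for a suitable scalar $\lambda\in\FF^\times$ (for the split octonions one may take $\lambda=1$), where $w$ is the octonion $\uu_1+\vv_1$ (or another fixed element squaring to $\pm1$ and anticommuting appropriately with $\MM$), so that $\OO=\MM\oplus\MM w$ as $\FF$-vector spaces and $w^2=\lambda 1_{\OO}$, $\MM w=w\MM$ with $wa=\bar a w$.

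The key step is then to define $\tilde\varphi:\OO\to\OO$ by $\tilde\varphi(a+bw)=\varphi(a)+\varphi(b)w$ and to verify that $\tilde\varphi$ is an algebra homomorphism. This is a direct computation: using the Cayley--Dickson formula and the facts that $\varphi$ is multiplicative on $\MM$ and commutes with conjugation (equivalently, with trace), one checks that $\tilde\varphi((a+bw)(c+dw))=\tilde\varphi(a+bw)\,\tilde\varphi(c+dw)$ termwise, the two nontrivial identities being $\varphi(ac)=\varphi(a)\varphi(c)$, $\varphi(\lambda\bar d b)=\lambda\overline{\varphi(d)}\,\varphi(b)$, and similarly for the $w$-component. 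Since $\tilde\varphi$ fixes $1_{\OO}$ and restricts to $\varphi$ on $\MM$, and since $\varphi$ is bijective, $\tilde\varphi$ is bijective, hence an automorphism of $\OO$ extending $\varphi$. I would also note that $\tilde\varphi\in\Aut(\OO)=\G$, which is the form in which the statement will be used.

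The only point requiring a little care --- and the main obstacle --- is making sure the doubling description of $\OO$ over $\MM$ is valid in characteristic two, where the usual $\frac12$-based formulas for trace and conjugation are unavailable; here one uses instead that on $\MM=M_2(\FF)$ the conjugation $x\mapsto\tr(x)1-x$ and norm $n(x)=\det(x)$ are honest polynomial expressions in the matrix entries (no division), so they are preserved by $\varphi$ regardless of characteristic, and the element $w=\uu_1+\vv_1$ indeed satisfies $w^2=-n(w)1_{\OO}+\tr(w)w=1_{\OO}$ (since $\tr(w)=0$, $n(w)=-1$ by the computation in the proof of Proposition~\ref{prop_MSS}) together with $wa=\bar a w$ for all $a\in\MM$, as one checks directly from the multiplication table of $\OO$. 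With these characteristic-free formulas in hand the verification goes through verbatim, and the lemma follows.
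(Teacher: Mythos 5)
Your overall strategy is exactly the one the paper adopts: the published proof of this lemma simply cites the Cayley--Dickson doubling process (Section 2.1 of Springer--Veldkamp), and your extension formula $\tilde\varphi(a+bw)=\varphi(a)+\varphi(b)w$, together with the observation that any automorphism of $\MM\cong M_2(\FF)$ preserves trace, conjugation and norm (all automorphisms being inner by Skolem--Noether, with no division by $2$ involved), is precisely how that extension is carried out, in any characteristic.

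However, your concrete instantiation contains a genuine error: the element $w=\uu_1+\vv_1$ lies \emph{inside} $\MM$ (it is $\mathcal{F}$ applied to the matrix $\matr{0}{1}{1}{0}$), so it cannot serve as the doubling element. With this choice $\MM w\subseteq\MM$, hence $\OO\neq\MM\oplus\MM w$, and the relation $wa=\ov{a}w$ you claim to check from the multiplication table is false: for instance $w\,\uu_1=\vv_1\uu_1=e_2$ while $\ov{\uu_1}\,w=-\uu_1(\uu_1+\vv_1)=-e_1$. (The identity $w^2=1_{\OO}$ does hold, but that is not enough.) The repair is immediate and is what the standard doubling construction requires: take $w$ in the orthogonal complement of $\MM$ with respect to the norm form, with $n(w)\neq0$ --- for example $w=\uu_2+\vv_2$, for which $\tr(w)=0$, $n(w)=-1$, so $w^2=1_{\OO}$, and one checks directly that $\MM w$ is spanned by $\uu_2,\uu_3,\vv_2,\vv_3$, that $\OO=\MM\oplus\MM w$, and that $wa=\ov{a}w$ for all $a\in\MM$. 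With this corrected $w$, your verification that $\tilde\varphi$ is multiplicative (using only that $\varphi$ commutes with the conjugation $x\mapsto\tr(x)1-x$, which is characteristic-free) goes through verbatim and yields the desired automorphism of $\OO$ extending $\varphi$, i.e.\ an element of $\G$ stabilizing $\MM$. For comparison, an alternative elementary route (the authors' original argument, later replaced at a referee's suggestion) avoids doubling altogether: write the automorphism of $\MM$ as conjugation by some $g\in\SL_2$, use that $\SL_2$ is generated by the elementary matrices $g_1(t),g_2(t)$, and observe that conjugation by these is realized on $\MM$ by the explicit automorphisms $\de_1(t,0,0),\de_2(t,0,0)\in\G$.
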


\begin{lemma}\label{lemma1_algO_2}
If $\algA\subset\OO$ is a non-zero subalgebra, then there exists $g\in\G$ such that $1_{\OO}\in g\algA$ or $\uu_1\in g\algA$ or $e_1\in g\algA$.  In particular, if $\Char{\FF}=2$ and $\algA\not\subset \OO_0$ is a non-zero subalgebra of $\OO$, then there exists $g\in\G$ such that $e_1\in g\algA$.
\end{lemma}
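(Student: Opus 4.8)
The plan is to reduce everything to a single non-zero element of $\algA$: normalize it by Proposition~\ref{prop_ThI}, and then exploit that $\algA$, being closed under products, contains its square, so that the linear span $\langle a, a^2\rangle$ lies in $\algA$; this at most $2$-dimensional span already contains one of $1_\OO$, $\uu_1$, $e_1$.

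First I would pick any $0\neq a\in\algA$ and, using Proposition~\ref{prop_ThI} together with the fact that $\G$ permutes the subalgebras of $\OO$, replace $\algA$ by $g_0\algA$ so that $a$ becomes a canonical octonion of type {\rm (D)} or {\rm ($\mathrm{K}_1$)}. Then I would run through the cases. If $a=\al_1 e_1+\al_8 e_2$ is of type {\rm (D)} with $(\al_1,\al_8)\neq(0,0)$: when $\al_1=\al_8$, $a$ is a non-zero scalar multiple of $1_\OO=e_1+e_2$, so $1_\OO\in\algA$; when exactly one of the two entries vanishes, $a$ is a non-zero multiple of $e_1$ or of $e_2$, and since $\hbar$ from \Ref{eq_h} is an automorphism with $\hbar e_2=e_1$, after possibly applying $\hbar$ we obtain $e_1\in g\algA$; when $\al_1,\al_8$ are non-zero and distinct, $a^2=\al_1^2 e_1+\al_8^2 e_2$ also lies in $\algA$, the Vandermonde determinant $\al_1\al_8(\al_8-\al_1)$ is non-zero, hence $\langle a,a^2\rangle=\FF e_1\oplus\FF e_2\subseteq\algA$ and again $e_1\in\algA$. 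If $a=\al_1 1_\OO+\uu_1$ is of type {\rm ($\mathrm{K}_1$)}: when $\al_1=0$ we already have $\uu_1=a\in\algA$; when $\al_1\neq0$, using $1_\OO a=a1_\OO=a$ and $\uu_1^2=0$ one gets $a^2=\al_1^2 1_\OO+2\al_1\uu_1\in\algA$, which equals $\al_1^2 1_\OO$ when $\Char\FF=2$ (so $1_\OO\in\algA$) and, when $\Char\FF\neq2$, is linearly independent of $a$, so $\langle a,a^2\rangle=\FF 1_\OO\oplus\FF\uu_1\subseteq\algA$ and $1_\OO\in\algA$. This settles the first assertion.

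For the ``in particular'' statement I would instead start, under the hypotheses $\Char\FF=2$ and $\algA\not\subset\OO_0$, from an element $a\in\algA$ with $\tr(a)\neq0$. Since the trace is $\G$-invariant and every octonion of type {\rm ($\mathrm{K}_1$)} has trace $2\al_1=0$, Proposition~\ref{prop_ThI} forces the normalized $a$ to be of type {\rm (D)} with $\al_1+\al_8=\tr(a)\neq0$, that is, $\al_1\neq\al_8$; the corresponding subcases of the type-{\rm (D)} analysis above then yield $e_1\in g\algA$ for a suitable $g\in\G$. I do not expect a genuine obstacle here: the whole argument rests on Proposition~\ref{prop_ThI} plus elementary $2\times2$ linear algebra inside $\langle a,a^2\rangle$, and the ingredients $\uu_1^2=0$, $1_\OO e_i=e_i$, $\hbar e_2=e_1$ are read off directly from the multiplication of $\OO$. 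The only point requiring a little care is the bookkeeping of the normalizing automorphisms, so that at the end the desired element is correctly located in $g\algA$ with $g$ equal to $g_0$ or to $\hbar g_0\in\G$, rather than in the original $\algA$.
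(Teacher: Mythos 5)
Your proof is correct, and every computation checks out: in type (D) the determinant $\al_1\al_8(\al_8-\al_1)$ argument gives $\FF e_1\oplus\FF e_2\subseteq\algA$, in type ($\mathrm{K}_1$) the identity $a^2=\al_1^2 1_{\OO}+2\al_1\uu_1$ (using $\uu_1^2=0$) gives $1_{\OO}$ or $\uu_1$, and the trace argument correctly rules out type ($\mathrm{K}_1$) for the ``in particular'' part in characteristic two. The route is, however, not the one the paper takes in its published proof: there the authors start from Proposition~\ref{prop_ThI} as you do, but then observe that both canonical types lie in the quaternion subalgebra $\MM\simeq M_2(\FF)$, invoke the known corresponding statement for nonzero subalgebras of $M_2(\FF)$ (up to conjugacy such a subalgebra contains the identity, a nilpotent, or a rank-one idempotent), and use Lemma~\ref{lemma_Grishkov} to extend the conjugating automorphism of $\MM$ to an element of $\G$. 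Your argument replaces this reduction by an elementary analysis of the span of $a$ and $a^2$ inside $\algA$, needing only the explicit element $\hbar\in\G$ from~\Ref{eq_h} to exchange $e_2$ and $e_1$; it is self-contained, works uniformly in all characteristics, and in fact coincides with the authors' original detailed proof that was removed at a referee's suggestion in favour of the shorter citation. What the published version buys is brevity and reuse of Lemma~\ref{lemma_Grishkov}, which the paper needs elsewhere anyway; what your version buys is independence from that extension lemma and from any facts about $M_2(\FF)$ beyond direct computation.
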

\begin{proof} \tb{This follows from Proposition~\ref{prop_ThI}, the known corresponding
statement for the algebra $\MM \simeq M_2(\FF)$ and Lemma~\ref{lemma_Grishkov}.}
\end{proof}

\subsection{The case of traceless subalgebra}

In this section we assume that $\Char{\FF}=2$ and $\algA\subset \OO$ is a subalgebra of traceless octonions, i.e., $\algA\subset \OO_0$.

\begin{remark}\label{remark1_algO_2}
If $a=\matr{\al}{\uu}{\vv}{\be}\in\algA$ is {\it triangular} (i.e., $\uu=\zero$ or $\vv=\zero$), and $\al\neq0$ or $\be\neq0$, then $1_{\OO}\in\algA$.  
\end{remark}
\begin{proof}
Since $\al=\be$ is non-zero, considering $a^2=\al^2 1_{\OO}$ we complete the proof.
\end{proof}

\begin{lemma}\label{lemma2_algO_2}
If $\dim\algA\geq2$, then there exists $g\in\G$ such that one of the following possibilities holds:
\begin{enumerate}
\item[(a)] $\{1_{\OO},\uu_1\}\subset g\algA$;

\item[(b)] $\{\uu_1,\vv_2\}\subset g\algA$ and $1_{\OO}\not\in g\algA$.
\end{enumerate}  
\end{lemma}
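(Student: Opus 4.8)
The plan is to follow the same style as the earlier results in \cite{LZ_1}: first reduce to a canonical form for one nonzero element of $\algA$, then analyze how a second element must sit relative to it. Since $\dim\algA\geq 2$, pick a nonzero $a\in\algA$. By Proposition~\ref{prop_ThI} we may apply an element of $\G$ so that $a$ is in canonical form (D) or ($\mathrm K_1$); but because $\algA\subset\OO_0$ and $\Char\FF=2$, the trace condition $\al_1=-\al_8=\al_8$ forces type (D) to be $a=\matr{\al_1}{\zero}{\zero}{\al_1}=\al_1 1_{\OO}$, and type ($\mathrm K_1$) gives $a=\matr{\al_1}{(1,0,0)}{\zero}{\al_1}$. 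In the first case, if $\al_1\neq 0$ we get $1_{\OO}\in\algA$ outright; if $\al_1=0$ then $a=0$, excluded. So either $1_{\OO}\in\algA$, or after a $\G$-move we may assume $\algA$ contains an element of the form $\matr{\al_1}{(1,0,0)}{\zero}{\al_1}$; squaring this (using $\Char\FF=2$ and \Ref{eq4}, noting $\uu_1\uu_1=0$) yields $\al_1^2 1_{\OO}$, so again if $\al_1\neq 0$ then $1_{\OO}\in\algA$, and if $\al_1=0$ then $\uu_1\in\algA$ directly.

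So the argument splits into two cases. \textbf{Case 1: $1_{\OO}\in\algA$.} Pick $b\in\algA$ linearly independent of $1_{\OO}$; replacing $b$ by $b-\tr(b)1_{\OO}=b$ (trace is zero anyway) we may assume $b\in\OO_0$ and $b\notin\FF 1_{\OO}$, so $b$ has a nonzero off-diagonal part, i.e. $b=\matr{\be_1}{\uu}{\vv}{\be_1}$ with $(\uu,\vv)\neq(\zero,\zero)$. Using the $\SL_3$-reductions of Remark~\ref{remark_Can1.2}(a),(b) together with the map $\hbar$ from \Ref{eq_h} (which swaps the roles of $\uu$ and $\vv$ and lies in $\G$) and keeping $1_{\OO}$ fixed, we can normalize $b$; subtracting the appropriate multiple of $1_{\OO}$ we can arrange $b=\uu_1$ (or, symmetrically, $b=\vv_1$, which is $\G$-equivalent to $\uu_1$ via $\hbar$ while fixing $1_{\OO}$). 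This gives possibility (a), $\{1_{\OO},\uu_1\}\subset g\algA$. \textbf{Case 2: $1_{\OO}\notin\algA$ and $\uu_1\in\algA$.} Now take $b\in\algA$ with $b\notin\FF\uu_1$. Write $b=\matr{\be_1}{\uu}{\vv}{\be_1}$. Since $\uu_1 b, b\uu_1\in\algA$ and $\uu_1$ is nilpotent and triangular, products with $\uu_1$ simplify; I would use these, plus Remark~\ref{remark1_algO_2} (to rule out $b$ being triangular with nonzero diagonal, which would drag $1_{\OO}$ into $\algA$, contradicting our case), to force $\be_1=0$ and then to normalize the off-diagonal part of $b$. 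The key point: if $\uu\neq\zero$ and is proportional to $\cc_1$, then $b\in\langle\uu_1,\vv_j\rangle$-type configuration; the $\SL_3$ freedom fixing $\uu_1$ (Remark~\ref{remark_Can1.2}(d),(e)) lets us push the $\vv$-part to $\vv_2$ or $\vv_3$, which are $\SL_3$-equivalent. After subtracting multiples of $\uu_1$ we land on $b=\vv_2$, giving possibility (b).

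The main obstacle I anticipate is the bookkeeping in Case 2: one must carefully track which subgroup of $\G$ (typically $\SL_3$ or the stabilizer of $\uu_1$) is still available after the first normalization, check that the multiplication closure of $\algA$ doesn't secretly force $1_{\OO}\in\algA$ (which would collapse us back to Case 1 with a \emph{different} conclusion than (b) demands), and verify that all the seemingly-different normal forms for $b$ (various choices of off-diagonal entries, upper vs.\ lower triangular) collapse to exactly $\uu_1$ in Case 1 and $\vv_2$ in Case 2 under the residual group action. The identity \Ref{eq3} (the linearized quadratic relation) together with the alternative identities \Ref{eq4}, \Ref{eq5} will be the workhorses for computing the products $\uu_1 b$, $b\uu_1$, $b^2$ and seeing what new elements they produce in $\algA$. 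I would organize the triangular-versus-non-triangular dichotomy for $b$ exactly as in Remark~\ref{remark_Can1.2}, and lean on the already-established canonical-pair classification (Proposition~\ref{prop_ThII}) as a sanity check: the pair $(\uu_1,b)$ must, after $\G$-action, be one of the listed types, and only a few of those are compatible with $\algA$ being a subalgebra with $1_{\OO}\notin\algA$.
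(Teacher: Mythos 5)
Your overall architecture matches the paper's: reduce to the dichotomy $1_{\OO}\in\algA$ versus $\uu_1\in\algA$, $1_{\OO}\notin\algA$, and then pin down a second element. But there is a genuine gap in your Case 1. You claim that the $\SL_3$-reductions of Remark~\ref{remark_Can1.2}(a),(b) together with $\hbar$, followed by subtracting a multiple of $1_{\OO}$, normalize the second element $b$ to $\uu_1$. This fails: $\SL_3$ acts on the off-diagonal data by $(\uu,\vv)\mapsto(\uu g,\vv g^{-T})$ and $\hbar$ by $(\uu,\vv)\mapsto(-\vv,-\uu)$, so both preserve the pairing $\uu\cdot\vv$, and subtracting multiples of $1_{\OO}$ does not touch the off-diagonal part at all. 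Take $b=\uu_1+\ga\vv_1$ with $\ga\neq0$: then $(\uu_1+\ga\vv_1)^2=\ga\,1_{\OO}$, so $\mathrm{span}(1_{\OO},\uu_1+\ga\vv_1)$ is a legitimate $2$-dimensional subalgebra of $\OO_0$ falling into your Case 1, yet $\uu\cdot\vv=\ga\neq0$ is an invariant of your allowed moves while it is $0$ for $\uu_1$. So your normalization cannot reach $\uu_1$. The conclusion is still true, but only because the \emph{full} group $\G$ (in particular the unipotent generators $\de_1,\de_2$) is available: one must invoke Proposition~\ref{prop_ThI} together with $\G 1_{\OO}=1_{\OO}$, which puts $b$ into type (D) (giving a multiple of $1_{\OO}$, contradicting linear independence) or type ($\mathrm K_1$), i.e.\ $\al 1_{\OO}+\uu_1$, after which one subtracts $\al 1_{\OO}$. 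This is exactly what the paper does, and it is not replaceable by $\SL_3$ and $\hbar$ alone.

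Your Case 2 is workable but substantially vaguer than the paper's. You propose to normalize $b$ relative to $\uu_1$ by hand via products and residual $\SL_3$-moves, invoking Proposition~\ref{prop_ThII} only as a ``sanity check.'' The paper instead applies Proposition~\ref{prop_ThII} directly to the pair $(\uu_1,b)$: since the first component is traceless of norm zero it must stay $\uu_1$, Remark~\ref{remark1_algO_2} plus $1_{\OO}\notin\algA$ kills the diagonal of $b'$, and only the types $\rm K_1L_1^{\top}$ (excluded because $\uu_1\vv_1=e_1$ has nonzero trace), $\rm K_1M_1$ (reduced to case (b) via $\uu_1\uu_2=\vv_3$ and Remark~\ref{remark_Can1.2}(c)), and $\rm K_1M_1^{\top}$ survive. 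You should make Proposition~\ref{prop_ThII} the backbone of this case rather than an afterthought; otherwise you must separately and explicitly dispose of the subcases where $b$ has a $\vv$-part proportional to $\cc_1$ (forcing $e_1\in\algA$, a contradiction) and where $b$ has only a $\uu$-part not proportional to $\cc_1$, neither of which your sketch fully treats.
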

\begin{proof} By Lemma~\ref{lemma1_algO_2}, we assume that one of the following alternatives holds:

\medskip
\noindent{\bf 1.} $1_{\OO}\in\algA$. There exists $a\in\algA$ such that $\{1_{\OO},a\}$ are linearly independent. Since $\G 1_{\OO}=1_{\OO}$, by Proposition~\ref{prop_ThI}, we can assume that $a=\al e_1 + \be e_2$ or $a=\al 1_{\OO} + \uu_1$ for some $\al,\be\in\FF$. In the first case we have $\al=\be$ and  $\{1_{\OO},a\}$ are linearly dependent; a contradiction. In the second case we obtain that $\uu_1 = a - \al 1_{\OO}$ lies in $\algA$.

\medskip
\noindent{\bf 2.} $\uu_1\in\algA$ and $1_{\OO}\not\in\algA$.  There exists $b\in\algA$ such that $\{\uu_1,b\}$ are linearly independent. Consider  $g\in\G$ such that $g (\uu_1,b)=(a',b')$ is one of the pairs from Proposition~\ref{prop_ThII}. Since $\tr(a')=n(a')=0$ and $a'\neq0$, one easily sees that $a'=\uu_1$. By Remark~\ref{remark1_algO_2} and the fact that $1_{\OO}\not\in\algA$ one sees that both diagonal entries of $b'$ are equal to zero. Using the fact that $\{\uu_1,b'\}$ are linearly independent, we obtain that the pair $(\uu_1,b')$ has one of the following types:

\begin{enumerate}
\item[{\rm($\rm K_1 L_1^{\top}$)}] $b'=\be \vv_1$, where $\be\in\FF\setminus \{0\}$. Since $\uu_1 b'=\be e_1$ and $\tr(e_1)\neq0$, we obtain a contradiction.

\item[{\rm($\rm K_1 M_1$)}] $b'=\uu_2$. Since $\uu_1 b' = \vv_3$, acting by a suitable element of $\SL_3$ and using part (c) of Remark~\ref{remark_Can1.2}, we obtain case (b).

\item[{\rm($\rm K_1 M_1^{\top}$)}] $b'=\vv_2$, i.e., we have case (b).
\end{enumerate}
\end{proof}

\begin{lemma}\label{lemma3_algO_2}
If $\dim\algA\geq3$, then there exists $g\in\G$ such that one of the following possibilities holds:
\begin{enumerate}
\item[(a)] $\{1_{\OO},\uu_1,\vv_2\}\subset g\algA$;

\item[(b)]\tb{ $\{\uu_1,\vv_2,\vv_3\}\subset g\algA$} and $1_{\OO}\not\in g\algA$. 
\end{enumerate}  
\end{lemma}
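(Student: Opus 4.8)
The plan is to build on Lemma~\ref{lemma2_algO_2}, which already provides, after applying a suitable $g\in\G$, one of the two configurations $\{1_{\OO},\uu_1\}\subset\algA$ or $\{\uu_1,\vv_2\}\subset\algA$ with $1_{\OO}\notin\algA$. Since $\dim\algA\geq3$, in either case we may pick a third element $b\in\algA$ which is linearly independent of the two basis elements produced so far, and the goal is to normalize $b$ (using only the stabilizer in $\G$ of those two elements) so that it becomes $\vv_2$ (case (a)) or $\vv_3$ (case (b)). The main tool will be the refined normal-form statements in Remark~\ref{remark_Can1.2} parts (a)--(e), which describe exactly what an element of $\SL_3$ can do to an octonion while fixing $(\uu_1,\vv_1)$ (or $\uu_1$ alone), together with the traceless hypothesis $\algA\subset\OO_0$ and Remark~\ref{remark1_algO_2} (a triangular traceless element of $\algA$ with a nonzero diagonal entry forces $1_{\OO}\in\algA$, which is excluded in case (b)).

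In the case $1_{\OO},\uu_1\in\algA$: the stabilizer of $1_{\OO}$ is all of $\G$ and the stabilizer of $\uu_1$ inside $\SL_3$ is a parabolic-type subgroup acting on the remaining coordinates; using part (a) of Remark~\ref{remark_Can1.2} and the fact that we are free to subtract multiples of $1_{\OO}$ and $\uu_1$, I would reduce $b$ to a traceless octonion with zero diagonal and $\uu$-part a multiple of $\uu_1$ (absorbed) or of the form $(0,1,0)$, and $\vv$-part controlled similarly. Closure of $\algA$ under multiplication then kicks in: products such as $\uu_1 b$, $b\uu_1$, $b^2$ must again lie in $\algA$, and the alternative/quadratic identities \Ref{eq2}, \Ref{eq3}, \Ref{eq5} force the surviving component of $b$ into the shape $\vv_2$ (possibly after one more $\SL_3$-move fixing $\uu_1$, or after applying $\hbar$ from \Ref{eq_h} to swap the roles of the $\uu$'s and $\vv$'s). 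In the case $\uu_1,\vv_2\in\algA$, $1_{\OO}\notin\algA$: now $\uu_1\vv_2=0$-type products and $\vv_2\uu_1$ give elements that must lie in $\algA$, and I normalize the third element $b$ using parts (c), (d), (e) of Remark~\ref{remark_Can1.2} (which are tailored precisely to fixing $\uu_1$ or $(\uu_1,\vv_1)$) to land it on $\vv_3$; throughout, Remark~\ref{remark1_algO_2} rules out any normal form with a nonzero diagonal entry, and a multiplication-closure check (e.g.\ $b^2$, $\uu_1 b$, $\vv_2 b$) eliminates the remaining off-diagonal possibilities.

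The hard part will be the bookkeeping of exactly which subgroup of $\G$ still stabilizes the already-fixed elements, and ensuring that the normalizing moves for the third vector do not disturb the first two — this is why the paper isolates the precise forms in Remark~\ref{remark_Can1.2}(c)--(e), which guarantee $g(\uu_1,\vv_1)=(\uu_1,\vv_1)$. A secondary subtlety is characteristic two: the quadratic identity \Ref{eq2} becomes $a^2=\tr(a)a+n(a)$ and for traceless $a$ reduces to $a^2=n(a)\cdot 1_{\OO}$, so closure under squaring of a traceless element with nonzero norm immediately drags $1_{\OO}$ into $\algA$ — a fact used repeatedly to force norm-zero forms in case (b) and to land in case (a) otherwise. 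I expect the write-up to proceed by taking $b$ through a short list of candidate normal forms coming from Proposition~\ref{prop_ThII} (applied to the pair $(\uu_1,b)$ or $(\vv_2,b)$ after discarding the components already absorbed), then knocking out each candidate that is incompatible with subalgebra closure, leaving exactly the two stated outcomes.
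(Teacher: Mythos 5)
Your overall strategy coincides with the paper's: start from Lemma~\ref{lemma2_algO_2}, pick a third element $b$ linearly independent of the two already normalized, and in the case $\{1_{\OO},\uu_1\}\subset\algA$ run the pair $(\uu_1,b)$ through Proposition~\ref{prop_ThII}, subtract a multiple of $1_{\OO}$, and kill the type $(\rm K_1L_1^{\top})$ via $\uu_1\vv_1=e_1\notin\OO_0$, landing on case (a) after an $\SL_3$-move from Remark~\ref{remark_Can1.2}(c). Your observation that in characteristic two a traceless $b$ satisfies $b^2=n(b)1_{\OO}$, so nonzero norm drags $1_{\OO}$ into $\algA$, is exactly the lever the paper uses in the second case. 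Up to this point the proposal is essentially the paper's proof.

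There is, however, a concrete gap in the case $\{\uu_1,\vv_2\}\subset\algA$, $1_{\OO}\notin\algA$. After the closure constraints (the products $\uu_1 b$ and $\vv_2 b$ must lie in $\algA$, which forces the $(0,\ast,\ast)$-component of the $\uu$-part and the $(\ast,0,\ast)$-component of the $\vv$-part to vanish in the relevant slots), the third element has the form $b=\be_1 1_{\OO}+\be_4\uu_3+\be_7\vv_3$ with $\be_1^2=\be_4\be_7$. When $\be_1\neq0$ your declared toolkit --- $\SL_3$-moves from Remark~\ref{remark_Can1.2}(c)--(e), the automorphism $\hbar$, and further multiplication-closure checks --- cannot finish: every element of $\SL_3$ fixes both diagonal entries of an octonion and $\hbar$ merely permutes them up to sign, so no such move removes the nonzero $1_{\OO}$-component of $b$; and closure gives nothing new, since e.g.\ $\uu_1 b=\be_1\uu_1-\be_4\vv_2$ already lies in the span of $\uu_1,\vv_2$ and $b^2=0$. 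The paper resolves this subcase with an explicit unipotent automorphism $\de_1(0,0,\be_1/\be_7)\in\G$, which sends $(\uu_1,\vv_2,b)$ to $(\uu_1+\tfrac{\be_1}{\be_7}\vv_2,\vv_2,\be_7\uu_3)$ and hence produces $\uu_3\in g\algA$, after which $\hbar$ and Remark~\ref{remark_Can1.2}(c) give case (b). You need to add a step of this kind (using the generators $\de_1,\de_2$ of $\G$ beyond $\SL_3$ and $\hbar$) for the argument to close.
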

\begin{proof}  By Lemma~\ref{lemma2_algO_2}, one can assume  that one of the following possibilities holds:

\medskip
\noindent{\bf 1.} $\{1_{\OO},\uu_1\}\subset \algA$. There exists $b\in\algA$ such that $\{1_{\OO},\uu_1,b\}$ are linearly independent.  Consider  $g\in\G$ such that $g (\uu_1,b)=(a',b')$ is one of the pairs from Proposition~\ref{prop_ThII}. Since $\tr(a')=n(a')=0$ and $a'\neq0$, one easily sees that $a'=\uu_1$. Let $\be'$ be the diagonal element of $b'$. Since $\G 1_{\OO}=1_{\OO}$, taking $b''=b'-\be' 1_{\OO}$ instead of $b'$, we can assume that $\{1_{\OO},\uu_1,b''\}\subset\algA$ are linearly independent and $(\uu_1,b'')$ has one of types from Proposition~\ref{prop_ThII}, where the diagonal elements of $b''$ are zeros. Consider the possible types for $(\uu_1,b'')$: 
\begin{enumerate}
\item[{\rm($\rm K_1 L_1^{\top}$)}] $\{1_{\OO},\uu_1,\be \vv_1\}\subset\algA$ for some non-zero $\be\in\FF$. Since $\uu_1 \vv_1=e_1$, we obtain a contradiction.

\item[{\rm($\rm K_1 M_1$)}] $\{1_{\OO},\uu_1,\uu_2\}\subset\algA$. Since $\uu_1 \uu_2 = \vv_3$, acting by a suitable element of $\SL_3$ from part (c) of Remark~\ref{remark_Can1.2} we obtain case (a).

\item[{\rm($\rm K_1 M_1^{\top}$)}] $\{1_{\OO},\uu_1,\vv_2\}\subset\algA$, i.e., we have case (a).
\end{enumerate}

\medskip
\noindent{\bf 2.} $\{\uu_1,\vv_2\}\subset \algA$ and $1_{\OO}\not\in \algA$. Consider $b\in\algA$ such that $\{\uu_1,\vv_2, b\}$ are linearly independent. Moreover, one can assume that $b=\matr{\be_1}{(0,\be_3,\be_4)}{(\be_5,0,\be_7)}{\be_1}$ for some $\be_i\in\FF$. Since
$$\uu_1 b = \matr{\be_5}{(\be_1,0,0)}{(0,-\be_4,\be_3)}{0}\quad \text{and}\quad  
\vv_2 b = \matr{0}{(-\be_7,0,\be_5)}{(0,\be_1,0)}{\be_3},$$%
we have $\be_3=\be_5=0$. The equality $b^2=(\be_1^2 + \be_4 \be_7)1_{\OO}$ implies that $\{\uu_1,\vv_2,b\}\subset \algA$, where the element $b=\be_1 1_{\OO} + \be_4 \uu_3 + \be_7 \vv_3$ is non-zero and $\be_1^2=\be_4\be_7$. 

\tb{Let $\be_1=0$. Then $\uu_3$ lies in $\algA$ or case (b) holds. If $\uu_3\in\algA$, then  $\{\uu_1,\vv_2,\uu_3\}\subset\algA$; thus, $\{\vv_1,\uu_2,\vv_3\}\subset\hbar\, \algA$ and part (c) of Remark~\ref{remark_Can1.2} implies that case (b) holds.}

\tb{Let $\be_1\neq 0$. Then $\be_4,\be_7\neq0$ and for $g=\de_1(0,0,\be_1/\be_7)$ from $\G$ we have $g(\uu_1,\vv_2,b)=(\uu_1+\frac{\be_1}{\be_7}\vv_2,\vv_2,\be_7 \uu_3)$. Therefore, $\{\uu_1,\vv_2,\uu_3\}\subset g\algA$ and case (b) holds (see above).}
\end{proof}

\subsection{The case of non-traceless subalgebra}\label{section_non_traceless}

In this section we assume that $\Char{\FF}=2$ and $\algA\not\subset \OO_0$ is a subalgebra of $\OO$.

\begin{lemma}\label{lemma2_algO_1}
If $\dim\algA\geq2$, then there exists $g\in\G$ such that one of the following possibilities holds:
\begin{enumerate}
\item[(a)] $\{e_1,\uu_1\}\subset g\algA$;

\item[(b)] $\{e_1,\vv_1\}\subset g\algA$;

\item[(c)] $\{e_1,e_2\}\subset g\algA$.
\end{enumerate}  
\end{lemma}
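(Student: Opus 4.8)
The plan is to reduce to the already-established case analysis from Proposition~\ref{prop_ThI} together with Lemma~\ref{lemma1_algO_2}. Since $\algA\not\subset \OO_0$ and $\Char\FF=2$, Lemma~\ref{lemma1_algO_2} guarantees that, after replacing $\algA$ by $g\algA$ for a suitable $g\in\G$, we may assume $e_1\in\algA$. Fix any $b\in\algA$ with $\{e_1,b\}$ linearly independent; such $b$ exists because $\dim\algA\geq2$. The idea is to normalize $b$ modulo the stabilizer of $e_1$ in $\G$ and then read off which of the three basis pairs we land on.

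First I would observe that $e_1 b$ and $b e_1$ again lie in $\algA$, and compute them explicitly for $b=\matr{\al}{\uu}{\vv}{\be}$: using the multiplication table one gets $e_1 b=\matr{\al}{\uu}{\zero}{0}$ and $b e_1=\matr{\al}{\zero}{\vv}{0}$. Hence both the ``upper'' octonion $\matr{\al}{\uu}{\zero}{0}$ and the ``lower'' octonion $\matr{\al}{\zero}{\vv}{0}$ belong to $\algA$, as does their sum minus $b$, which equals $\matr{\al}{\zero}{\zero}{\be}$... more carefully, $e_1b+be_1-b=\matr{\al}{\zero}{\zero}{-\be}$, and since $\Char\FF=2$ this is $\matr{\al}{\zero}{\zero}{\be}$; combined with $e_1=\matr{1}{\zero}{\zero}{0}\in\algA$ this shows $\be e_2\in\algA$ and $\al e_1\in\algA$, so in fact $\matr{0}{\uu}{\zero}{0}$ and $\matr{0}{\zero}{\vv}{0}$ each lie in $\algA$ separately. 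Now linear independence of $\{e_1,b\}$ forces at least one of $\uu,\vv,\be$ to be nonzero. If $\be\neq0$ we are in case (c) after noting $e_2=\be^{-1}(\be e_2)\in\algA$. Otherwise $\uu\neq0$ or $\vv\neq0$; in the first case $\matr{0}{\uu}{\zero}{0}\in\algA$ is a nonzero element of the ``$\uu$-space,'' and using the action of $\SL_3<\G$ (which fixes $e_1$ and acts transitively on nonzero vectors of $\FF^3$ in the $\uu$-slot via $\uu\mapsto\uu g$) we may send it to $\uu_1$, landing in case (a); symmetrically, if $\vv\neq0$ we use $\uu_i\mapsto\uu g$, $\vv_i\mapsto\vv g^{-T}$ to send the lower vector to $\cc_1$, giving case (b). One must check that after this $\SL_3$-twist we still have $e_1\in\algA$, which is automatic since $\SL_3$ fixes $e_1$.

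The main obstacle I anticipate is purely bookkeeping: verifying that the products $e_1b$, $be_1$, and the various $\FF$-linear combinations genuinely stay inside the subalgebra $\algA$ (they do, since $\algA$ is closed under multiplication and addition) and that the characteristic-two identifications $-\be=\be$ are used consistently, so that one really does extract $\uu_1$, $\vv_1$, or $e_2$ cleanly rather than some scalar multiple that could vanish. A secondary point is to make sure the three cases (a), (b), (c) are genuinely exhaustive given only $\dim\algA\geq2$ and $\algA\not\subset\OO_0$; the argument above shows they are, because the only obstruction to being in case (c) is $\be=0$ for every choice of $b$, but then the ``diagonal part'' of $\algA$ is spanned by $e_1$ alone, forcing a nonzero off-diagonal contribution which yields (a) or (b). No deep input beyond Proposition~\ref{prop_ThI}, Lemma~\ref{lemma1_algO_2}, and the transitivity of $\SL_3$ on $\FF^3\setminus\{0\}$ is needed, so the proof should be short.
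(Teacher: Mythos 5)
Your argument is correct and follows essentially the same route as the paper: reduce via Lemma~\ref{lemma1_algO_2} to $e_1\in\algA$, use the products $e_1b$ and $be_1$ to isolate the $\uu$- and $\vv$-parts (the paper normalizes $b$ to have zero top-left entry instead of subtracting $\al e_1$ afterwards, an immaterial difference), and then apply the $\SL_3$-subgroup fixing $e_1$ (Remark~\ref{remark_Can1.2}) to normalize a nonzero vector to $\cc_1$, with the purely diagonal case yielding $e_2$. No gaps.
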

\begin{proof}
By Lemma~\ref{lemma1_algO_2} we can assume that $e_1\in\algA$.  There exists $b\in\algA$ such that $\{e_1,b\}$ are linearly independent.  Moreover, one can also assume that $b=\matr{0}{\uu}{\vv}{\be}$ for some $\uu,\vv\in\FF^3$ and $\be\in\FF$. 

Assume $\uu\neq\zero$. Since $e_1 b = \matr{0}{\uu}{\zero}{0}$, by part (a) of Remark~\ref{remark_Can1.2} there exists $g\in\SL_3$ such that $g(e_1,e_1 b)=(e_1,\uu_1)$, i.e., the case (a) holds. 

Assume $\vv\neq\zero$. Since $b\, e_1 = \matr{0}{\zero}{\vv}{0}$, by part (b) of Remark~\ref{remark_Can1.2} there exists $g\in\SL_3$ such that $g( e_1,b\,e_1)=(e_1,\vv_1)$, i.e., the case (b) holds.

In case $\uu=\vv=\zero$ we have $\be\neq0$, i.e., the case (c) holds.
\end{proof}

\begin{lemma}\label{lemma3_algO_1}
If $\dim\algA\geq3$, then there exists $g\in\G$ such that one of the following possibilities holds:
\begin{enumerate}
\item[(a)] $\{e_1,e_2,\uu_1\}\subset g\algA$;

\item[(b)] $\{e_1,\uu_1,\vv_2\}\subset g\algA$.
\end{enumerate}  
\end{lemma}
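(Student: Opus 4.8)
The plan is to run the same bootstrapping argument used in Lemmas~\ref{lemma2_algO_2} and \ref{lemma3_algO_2}, starting from the three cases produced by Lemma~\ref{lemma2_algO_1}. First I would dispose of the possibility $\{e_1,e_2\}\subset\algA$ by itself: if $\dim\algA\geq3$, pick $b\in\algA$ with $\{e_1,e_2,b\}$ linearly independent; replacing $b$ by $b-\al e_1-\be e_2$ (using $e_1b,\,be_1$ etc.\ to kill the diagonal of $b$, exactly as in Remark~\ref{remark1_algO_2}-type manipulations) we may take $b=\matr{0}{\uu}{\vv}{0}$ with $\uu\neq\zero$ or $\vv\neq\zero$. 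If $\uu\neq\zero$, then $e_1b=\matr{0}{\uu}{\zero}{0}$ and part~(a) of Remark~\ref{remark_Can1.2} gives $g\in\SL_3$ with $g(e_1,e_2,e_1b)=(e_1,e_2,\uu_1)$, so case~(a) holds; the subcase $\vv\neq\zero$ is symmetric after applying $\hbar$ (which fixes $e_1,e_2$ and swaps the roles of $\uu$ and $\vv$), again landing in case~(a). So we may assume $\algA$ already contains $\{e_1,\uu_1\}$ or $\{e_1,\vv_1\}$, and by applying $\hbar$ we may assume it contains $\{e_1,\uu_1\}$.

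Now the main step: assume $\{e_1,\uu_1\}\subset\algA$ and $\dim\algA\geq3$; pick $b\in\algA$ with $\{e_1,\uu_1,b\}$ linearly independent. Using $e_1b=\matr{\al_1}{\uu}{\zero}{0}$ (where $b=\matr{\al_1}{\uu}{\vv}{\al_8}$) we can subtract $\al_1 e_1$; using $be_1$ we can handle $\vv$; and using $\uu_1b$, $b\uu_1$ we can clean up further coordinates of $\uu$ and $\vv$. More systematically, I would write $b=\matr{\al_1}{(\ga_1,\ga_2,\ga_3)}{(\de_1,\de_2,\de_3)}{\al_8}$, subtract $\al_1 e_1+\al_8 e_2$ to get the diagonal into the span of $\{e_1,e_2\}$ (if $\al_8\neq0$ we have recovered $e_2$ and we are in the $\{e_1,e_2,\uu_1\}$ situation, i.e.\ case~(a) after a further $\SL_3$-move via Remark~\ref{remark_Can1.2}(a) if needed), and then use products with $e_1$ and $\uu_1$ to kill the $(\de_1,\de_2,\de_3)$-part and the $\ga_1$-component; what survives is a linear combination of $\uu_1$ (already in $\algA$), some $\uu_j$ ($j=2,3$) and some $\vv_j$. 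Computing $\uu_1\cdot(\text{survivor})$ and squaring the survivor, as in the proof of Lemma~\ref{lemma3_algO_2}, forces the remaining data into the shape $c\,\uu_2+d\,\vv_3$ (or $c\,\uu_3+d\,\vv_2$) up to an $\SL_3$-element fixing $e_1,\uu_1$; one then checks that the subalgebra closure together with Remark~\ref{remark_Can1.2}(c) and a diagonal torus element $\de_1(\cdot)$ or $\de_2(\cdot)$ normalizes this to $\vv_2$, giving $\{e_1,\uu_1,\vv_2\}\subset g\algA$, which is case~(b). The alternative where a nonzero $\vv_1$ appears among the products (e.g.\ from $\uu_1\cdot\vv_1=e_1$, already present) or where $e_2$ reappears is routed back into case~(a).

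The hard part will be the careful bookkeeping in that middle step: there are several subcases for which components of $b$ survive the $\SL_3$-normalization, and each must be shown to collapse — via the alternativity identities \Ref{eq4}, the quadratic relation \Ref{eq2} (which in characteristic two reads $a^2=\tr(a)a+n(a)$, so $a^2\in\FF1_{\OO}$ when $\tr(a)=0$), and the explicit action of $\de_1,\de_2$ — either to case~(a) or to case~(b). I expect no genuinely new phenomenon beyond what appears in Lemmas~\ref{lemma2_algO_1}--\ref{lemma3_algO_2}, so the argument is a (somewhat lengthy) case check rather than a conceptual obstacle; the role of $\hbar$ and of Remark~\ref{remark_Can1.2} throughout is what keeps the number of cases finite.
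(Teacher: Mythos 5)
Your overall plan — start from the three outputs of Lemma~\ref{lemma2_algO_1}, multiply by $e_1$ and $\uu_1$, and normalize with Remark~\ref{remark_Can1.2} — is the paper's strategy, and your treatment of the case $\{e_1,e_2\}\subset\algA$ is essentially the paper's. The genuine gap is the reduction ``by applying $\hbar$ we may assume $\algA$ contains $\{e_1,\uu_1\}$''. In fact $\hbar$ does not fix $e_1,e_2$: it swaps them and interchanges $\uu_i$ with $\vv_i$ (up to sign, irrelevant in characteristic two), so an algebra containing $\{e_1,\vv_1\}$ is mapped by $\hbar$ to one containing $\{e_2,\uu_1\}$, not $\{e_1,\uu_1\}$. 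No automorphism fixing $e_1$ can help either: such an automorphism fixes $e_2=1_{\OO}-e_1$ and preserves each of the subspaces $\{x\mid e_1x=x,\ xe_1=0\}=\langle\uu_1,\uu_2,\uu_3\rangle$ and $\{x\mid xe_1=x,\ e_1x=0\}=\langle\vv_1,\vv_2,\vv_3\rangle$, so the pairs $(e_1,\uu_1)$ and $(e_1,\vv_1)$ lie in different $\G$-orbits. (The statement you want is true a posteriori, since it follows from the lemma itself, but using it here is circular.) Hence the case $\{e_1,\vv_1\}\subset\algA$ must be run as a separate, mirrored argument, and its outcomes — configurations such as $\{e_1,e_2,\vv_1\}$ and $\{e_1,\vv_1,\uu_2\}$ — are not literally (a) or (b); they still have to be converted into (a), (b) by explicit elements of $\G$ ($\hbar$ combined with the $\SL_3$-elements of Remark~\ref{remark_Can1.2}(c)). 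This is exactly the role of Remark~\ref{remark_to_lemma3_algO_1} in the paper, and that step is absent from your proposal. (The same slip about $\hbar$ occurs in your $\{e_1,e_2\}$ case, where it is harmless because the pair $\{e_1,e_2\}$ is preserved as a set.)

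In the main case $\{e_1,\uu_1\}\subset\algA$ your sketch points the right way but the decisive steps are left unverified, and the tool you lean on is not the right one: squaring a traceless ``survivor'' only yields $n(\cdot)\,1_{\OO}$ (for instance $(c\uu_2+d\vv_3)^2=0$), so it does not force the normal form you claim; squaring is needed in Lemma~\ref{lemma3_algO_2} precisely because there no idempotent is available. Here the case closes by a short direct analysis, with no $\de_1,\de_2$ and no squaring: after subtracting multiples of $e_1$ and $\uu_1$ from $b$ (and noting that a nonzero bottom-right entry already yields $e_2\in\algA$, e.g.\ via $b+e_1b+be_1+\al_1e_1=\al_8e_2$ in characteristic two — your parenthetical ``we have recovered $e_2$'' needs such a verification), one splits on the $\uu$-part of $e_1b$: if it is nonzero, Remark~\ref{remark_Can1.2}(d),(c) normalize it to $\uu_3$ and the identity $\uu_1\uu_3=-\vv_2$ gives case (b); if it vanishes, then $b\uu_1=\ga_1e_2$ settles $\ga_1\neq0$ (case (a)), Remark~\ref{remark_Can1.2}(e) settles $(\ga_2,\ga_3)\neq(0,0)$ (case (b)), and otherwise $e_2\in\algA$. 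So the bookkeeping you defer is indeed manageable, but as written the proposal rests on a false symmetry reduction and does not yet contain the computations that make the remaining cases close.
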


\medskip{}
\noindent{}Before the proof of this lemma we formulate the following remark.

\begin{remark}\label{remark_to_lemma3_algO_1}
\begin{enumerate}
\item[(a)] $\{e_1,e_2,\uu_1\}\subset \algA$ if and only if $\{e_1,e_2,\vv_1\}\subset \hbar\, \algA$;

\item[(b)] $\{e_1,\uu_1,\vv_2\}\subset \algA$ if and only if $\{e_1,\uu_2,\vv_1\}\subset g\algA$ for some $g\in\G$ (see part (c) of Remark~\ref{remark_Can1.2}).
\end{enumerate}  
\end{remark}

\begin{proof_of}{of Lemma~\ref{lemma3_algO_1}.}  By Lemma~\ref{lemma2_algO_1}, we assume that one of the following possibilities holds:

\medskip
\noindent{\bf 1.} $\{e_1,\uu_1\}\subset \algA$. There exists $b\in\algA$ such that $\{e_1,\uu_1,b\}$ are linearly independent.  Moreover, we can assume that $b=\matr{0}{\uu}{\vv}{\be}$ for some $\uu=(0,\ast,\ast)\in\FF^3$, $\vv=(\ga_1,\ga_2,\ga_3)\in\FF^3$ and $\be\in\FF$. 

Assume $\uu\neq\zero$. Since $e_1 b = \matr{0}{\uu}{\zero}{0}$, by part (d) of Remark~\ref{remark_Can1.2} there exists $g\in\SL_3$ such that $g(e_1,\uu_1,e_1 b)=(e_1,\uu_1,\uu_2)$. \tb{By part (c) of Remark~\ref{remark_Can1.2} there exists $g'\in\SL_3$ such that $g'(e_1,\uu_1,e_1 b)=(e_1,\uu_1,\uu_3)$}. The equality $\uu_1\uu_3=-\vv_2$ implies that the case (b) holds. 

Assume $\uu=\zero$.  Note that $b\, e_1 = \matr{0}{\zero}{\vv}{0}$ lies in $\algA$. 

Let $\ga_1\neq0$. The equality $b\, \uu_1=\ga_1 e_2$ implies that the case (a) holds. 

Otherwise, $\ga_1=0$. In case $\ga_2\neq0$ or $\ga_3\neq0$, by \tb{part (e) of Remark~\ref{remark_Can1.2}} there exists $g\in\SL_3$ such that $g(e_1,\uu_1, b\,e_1)=(e_1,\uu_1,\vv_2)$, i.e., the case (b) holds. 
If $\ga_2=\ga_3=0$, then $\be\neq0$ and $e_2\in\algA$, i.e., the case (a) holds.

\medskip
\noindent{\bf 2.} The case $\{e_1,\vv_1\}\subset \algA$ is similar to case 1.

\medskip
\noindent{\bf 3.} $\{e_1,e_2\}\subset \algA$.  There exists $b=\matr{\al}{\uu}{\vv}{\be}$ in $\algA$ such that $\{e_1,e_2,b\}$ are linearly independent. Moreover, we can assume that $\al=\be=0$.

Assume $\uu\neq\zero$. Since $e_1 b = \matr{0}{\uu}{\zero}{0}$, by part (a) of Remark~\ref{remark_Can1.2} there exists $g\in\SL_3$ such that $g(e_1,e_2,e_1 b)=(e_1,e_2,\uu_1)$, i.e., the case (a) holds. 

Otherwise, $\vv\neq\zero$.  Since $b = \matr{0}{\zero}{\vv}{0}$  by part (b) of Remark~\ref{remark_Can1.2} there exists $g\in\SL_3$ such that $g(e_1,e_2,b )=(e_1,e_2,\vv_1)$. By part (a) of Remark~\ref{remark_to_lemma3_algO_1} we obtain that case (a) holds. 
\end{proof_of}

\subsection{Proof of Proposition~\ref{prop_subalg}}\label{section_proof}
Assume $\mathfrak{A}=[\algA]$ for some subalgebra $\algA$ of $\OO$. Lemmas~\ref{lemma1_algO_2}, \ref{lemma2_algO_2}, \ref{lemma3_algO_2}, \ref{lemma2_algO_1}, \ref{lemma3_algO_1} imply that there exist $g\in\G$ such that $g\algA$ contains one of the sets from the formulation of Proposition~\ref{prop_subalg}. Since the  $\FF$-span of each of these sets is an algebra, the proof is completed.

\section{Separating invariants in case $\Char{\FF}=2$}\label{section_separ2}

In this section we assume that $\Char{\FF}=2$. Introduce the following notations for $\un{a}\in\OO^n$:
\begin{enumerate}
\item[$\bullet$]  the {\it rank} $\rk(\un{a})$ is the dimension of the subspace of $\OO$ spanned by $a_1,\ldots,a_n$; 

\item[$\bullet$]  $\alg(\un{a})$ is the $\FF$-algebra (in general, non-unital) generated by $a_1,\ldots,a_n$.

\end{enumerate}
Obviously, $\rk(g\un{a})=\rk(\un{a})$ for every $g\in\G$. The following remark is well-known (for example, see Theorem 2.3.6 of~\cite{derksen2002computationalv2}).

\begin{remark}\label{remark_closure}
Assume $\un{a}\in\OO^n$. Then there exists a unique  closed $\G$-orbit $O=O_{\un{a}}$ in the closure of $\G\un{a}$. Moreover, $O_{\un{a}}$ is the only closed orbit in the fiber
$$\{\un{c}\in\OO^n\,|\, f(\un{a})=f(\un{c}) \text{ for all } f\in K_n^{\G}\}.$$ 
In particular, $f(\un{a})=f(\un{c})$ for every $f\in K_n^{\G}$ and $\un{c}\in O_a$. 
\end{remark}

Observe that the group $\GL_n$ acts naturally on $\OO^n$ on the right as follows: for any $A=(\al_{ij})\in \GL_n$ and $\un{a}\in\OO^n$ we set
$$(\un{a}A)_i=\sum_{1\leq k\leq n} \al_{ki}a_k \;\;\text{ for }\;\; 1\leq i\leq n. $$
This action commutes with the left $\G$-action.

\begin{lemma}\label{lemma_separ_lin}
Given $\un{a},\un{b}\in\OO^n$, define $\un{a}'=\un{a}A$ and $\un{b}'=\un{b}A$ for some $A\in \GL_n$. Then
\begin{enumerate}
\item[(a)] $\G\un{a}=\G\un{b}$ if and only if $\G\un{a}'=\G\un{b}'$;

\item[(b)] given some $d\geq2$, we have that $\un{a}$ and $\un{b}$ are not separated by $\set{n}^{(d)}$ if and only if $\un{a}'$ and $\un{b}'$ are not separated by $\set{n}^{(d)}$;

\item[(c)] $\G\un{a}$ is closed if and only if $\G\un{a}'$ is closed. 
\end{enumerate}	
\end{lemma}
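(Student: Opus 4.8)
The plan is to prove all three parts simultaneously by exploiting the fact that the right $\GL_n$-action commutes with the left $\G$-action, so that $A$ induces a $\G$-equivariant linear automorphism of $\OO^n$. First I would set up the basic observation: for $A\in\GL_n$ the map $\rho_A:\OO^n\to\OO^n$, $\un{a}\mapsto\un{a}A$, is a linear isomorphism of the variety $\OO^n$ commuting with the $\G$-action, hence it maps $\G$-orbits to $\G$-orbits and, being a homeomorphism in the Zariski topology, maps orbit closures to orbit closures and closed orbits to closed orbits. Parts (a) and (c) are then immediate: $\G\un{a}=\G\un{b}$ iff $\rho_A(\G\un{a})=\rho_A(\G\un{b})$, i.e. $\G\un{a}'=\G\un{b}'$; and $\G\un{a}$ closed iff $\rho_A(\G\un{a})=\G\un{a}'$ closed. (Here one uses that $\rho_A$ is invertible with inverse $\rho_{A^{-1}}$, so the implications go both ways, and it suffices to prove one direction.)

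For part (b), the key point is that the span of the coordinates of $\un{a}A$ inside $\OO$ equals the span of the coordinates of $\un{a}$ (columns of $A$ being a new basis of the same row space), and more importantly that each element of $\set{n}^{(d)}$ evaluated at $\un{a}A$ is a polynomial, homogeneous of the same degree $\le d$, in the elements of $\set{n}^{(d)}$ evaluated at $\un{a}$. Concretely, $n((\un{a}A)_i)=n\bigl(\sum_k\al_{ki}a_k\bigr)$ expands via the bilinear form $q$ into a combination of $n(a_k)$ and $q(a_k,a_l)=\tr(a_k)\tr(a_l)-\tr(a_k a_l)$ (using \Ref{eq3a}), and these last quantities are exactly (polynomials in) elements of $\set{n}^{(2)}\subseteq\set{n}^{(d)}$ since $d\ge2$; similarly $\tr\bigl((\cdots((\un{a}A)_{i_1}(\un{a}A)_{i_2})\cdots)(\un{a}A)_{i_k}\bigr)$ for $k\le d$ multilinearly expands into a $\FF$-combination of $\tr\bigl((\cdots(a_{j_1}a_{j_2})\cdots)a_{j_k}\bigr)$ with $j_1,\dots,j_k$ arbitrary indices, and each such trace of a left-normed product lies in the $\FF$-algebra generated by $\set{n}^{(k)}$ by Lemma~\ref{lemma1}(2),(3) (applied with generic octonions replaced by the $a_j$, which is legitimate since the identities \Ref{eq_trAlt1}--\Ref{eq_trAB} hold for all octonions). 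Hence if $f(\un{a})=f(\un{a}')$ wait — more precisely: if $\un{a},\un{b}$ agree on all of $\set{n}^{(d)}$, then they agree on every polynomial in $\set{n}^{(d)}$, in particular on $f\circ\rho_A$ for every $f\in\set{n}^{(d)}$, which says $\un{a}'=\un{a}A$ and $\un{b}'=\un{b}A$ agree on $\set{n}^{(d)}$; applying the same argument with $A^{-1}$ gives the converse.

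I would organize the proof as: (i) state the equivariance of $\rho_A$ and deduce (a), (c) in two lines; (ii) prove the closure statement in (b) that every $f\in\set{n}^{(d)}$ precomposed with $\rho_A$ lies in the subalgebra of $K_n$ generated by $\set{n}^{(d)}$ (this is the substantive lemma, and it is where the degree bound $d\ge2$ and Lemma~\ref{lemma1} get used); (iii) conclude (b) by the symmetric separation argument above. The main obstacle is step (ii): one must check carefully that the expansion of a left-normed trace product $\tr\bigl((\cdots(\sum_k\al_{k i_1}a_k)\cdots)(\sum_k\al_{k i_k}a_k)\bigr)$ does not produce products that leave $\set{n}$ — but multilinearity reduces it to left-normed traces $\tr(a_{j_1},\dots,a_{j_k})$ with possibly repeated or unsorted indices, and Lemma~\ref{lemma1} parts (1)--(3) (valid for arbitrary octonions, hence for $a_1,\dots,a_n$) rewrite each of these, modulo products of lower-degree traces, as $\pm\tr(a_{j'_1},\dots,a_{j'_m})$ with strictly increasing indices and $m\le k\le d$, i.e. modulo the subalgebra generated by $\set{n}^{(d)}$; the norm term is handled the same way via \Ref{eq3a} and \Ref{eq_n}. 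Everything else is bookkeeping.
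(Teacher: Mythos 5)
Your proposal is correct and follows essentially the same route as the paper: invertibility of $A$ reduces everything to one implication, parts (a) and (c) come from the $\G$-equivariance and Zariski-homeomorphism properties of the right action $\rho_A$, and part (b) comes from expanding each $f\in\set{n}^{(d)}$ at $\un{a}A$ by multilinearity and rewriting via Lemma~\ref{lemma1} together with formulas~\Ref{eq3a} and~\Ref{eq_n} as a polynomial in $\set{n}^{(d)}$ evaluated at $\un{a}$. Your write-up merely spells out in more detail the expansion step that the paper compresses into a single sentence.
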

\begin{proof} Since $A$ is invertible, for each part of this lemma it is sufficient to prove ``only if''{} implication.

\medskip
\noindent{\bf (a)} For each $g\in \G$ the equality $g\un{a}=\un{b}$ implies $g\un{a}'=\un{b}'$, hence our claim follows. 

\medskip
\noindent{\bf (b)} Assume that $\un{a}$ and $\un{b}$ are not separated by $\set{n}^{(d)}$, i.e.,  $f(\un{a})=f(\un{b})$ for all $f\in \set{n}^{(d)}$.
The linearity of the trace together with \tb{Lemma~\ref{lemma1} and formulas}~\Ref{eq3a}, \tb{\Ref{eq_n}} imply that $h(\un{a}')=h(\un{b}')$ for all $h\in\set{n}^{(d)}$. 

\medskip
\noindent{\bf (c)} The right action by $A$ on $\OO^n$ gives a homeomorphism
of $\OO^n$ with respect to the Zariski topology. Hence it sends closed subsets to closed subsets.  \tb{Moreover, it maps $\G$-orbits to $\G$-orbits.}
\end{proof}


The following remark is a consequence of part (c) of Lemma~\ref{lemma_separ_lin}.

\begin{remark}\label{remak_closed}
An equivalence class $\mathfrak{A}\in\Omega(\OO)$ is closed if and only if for every subalgebra $\algA$ of $\OO$ with $[\algA]=\mathfrak{A}$ we have that if $\algA$ is the $\FF$-span of some $a_1,\ldots,a_n$, then the $\G$-orbit of $(a_1,\ldots,a_n)$ is closed in $\OO^n$. 
\end{remark}

\begin{prop}\label{prop_key} The set $\set{m}^{(8)}\subset \K{m}^{\G}$ is separating for every $m>0$ if and only if $\set{n}^{(8)}$ separates different closed $\G$-orbits of $\un{a}=(a_1,\ldots,a_l,0,\ldots,0)\in\OO^n$ and $\un{b}\in\OO^n$ for all $n>0$, where
\begin{enumerate}
\item[$\bullet$] $a_1,\ldots,a_l$ is a basis of some subalgebra $\algA$ of $\OO$,

\item[$\bullet$] $b_1,\ldots,b_n$ of $\OO$ linearly generate some subalgebra $\algB$ of $\OO$,

\item[$\bullet$] $\dim\algA \geq \dim\algB$.

\end{enumerate}
\end{prop}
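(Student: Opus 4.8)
The plan is to establish both directions of the equivalence, using the reduction lemmas already available. The ``only if'' direction is immediate: if $\set{m}^{(8)}$ is separating for every $m>0$, then in particular it separates any two points of $\OO^n$ that lie on different $\G$-orbits, so it certainly separates two points on different \emph{closed} orbits of the special form described. The real content is the ``if'' direction, and the strategy is to show that the apparently special situation described in the statement is in fact the general one, after applying (i) the right $\GL_n$-action of Lemma~\ref{lemma_separ_lin}, and (ii) the reduction to closed orbits supplied by Remark~\ref{remark_closure}.

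First I would take arbitrary $\un{a},\un{b}\in\OO^m$ that are separated (i.e.\ lie on different fibers of $\K{m}^{\G}$) and reduce to the normalized form. By Remark~\ref{remark_closure}, replacing $\un{a}$ by a point of its unique closed orbit $O_{\un{a}}$ and likewise for $\un{b}$ does not change the value of any $f\in\K{m}^{\G}$, and in particular does not change whether $\set{m}^{(8)}$ separates them; so we may assume $\G\un{a}$ and $\G\un{b}$ are both closed, and they are still distinct because $\un{a},\un{b}$ are separated. Next, using the right $\GL_m$-action: choosing $A\in\GL_m$ that performs column operations, I can arrange that the nonzero entries among $a_1,\dots,a_m$ come first and are linearly independent, i.e.\ $\un{a}A=(a_1',\dots,a_l',0,\dots,0)$ with $a_1',\dots,a_l'$ a basis of the subspace they span. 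By parts (a), (b), (c) of Lemma~\ref{lemma_separ_lin}, this does not affect closedness of the orbits, does not affect whether they are separated by $\set{m}^{(8)}$, and does not merge the two orbits; and a further $\GL_m$ move applied simultaneously to $\un{b}A$ is harmless for the same reasons, so we may also assume $b_1,\dots,b_m$ are whatever is convenient (they linearly generate a subspace $\algB$). Since $\G\un{a}$ is closed and $a_1',\dots,a_l'$ span an algebra — here one invokes that the closed orbit forces $\alg(\un{a}')$ to coincide with the linear span, which is the content connecting ``closed orbit'' to ``subalgebra'' in Remark~\ref{remak_closed} and Proposition~\ref{prop_subalg} — the span $\algA$ is a subalgebra of $\OO$; similarly $\algB$ is a subalgebra. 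Finally, if $\dim\algA<\dim\algB$, swap the roles of $\un{a}$ and $\un{b}$, so WLOG $\dim\algA\ge\dim\algB$. This puts us exactly in the hypothesis of the right-hand side (with $n=m$), so by assumption $\set{m}^{(8)}$ separates $\un{a}$ and $\un{b}$; unwinding the $\GL_m$-reductions via Lemma~\ref{lemma_separ_lin}(b) and the closed-orbit reduction via Remark~\ref{remark_closure}, $\set{m}^{(8)}$ separates the original pair. Hence $\set{m}^{(8)}$ is separating.

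The main obstacle I anticipate is the step that passes from ``$\G\un{a}$ is closed'' to ``the linear span of the nonzero $a_i$ is a subalgebra of $\OO$'': a priori the span need only generate the algebra $\alg(\un{a})$, and one must argue that closedness of the orbit forces $\alg(\un{a})$ to be equal to (not merely contain) this span. This is where the detailed classification of low-dimensional subalgebras and their closedness properties, together with the Hilbert--Mumford / one-parameter-subgroup machinery of Section~\ref{subsection_1gr}, does the work: if the span $\spaceV=\langle a_1,\dots,a_l\rangle$ were a proper subspace of the algebra it generates, one produces a standard one-parameter subgroup $\theta_{\un\la}$ degenerating $\un{a}$ to a point of strictly smaller algebra-dimension still in $\overline{\G\un{a}}$, contradicting closedness. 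A secondary bookkeeping point is to make sure the $\GL_m$-reduction applied to $\un{a}$ is applied \emph{with the same matrix} $A$ to $\un{b}$ so that Lemma~\ref{lemma_separ_lin} applies verbatim; once the two normalizations for $\un a$ and $\un b$ are decoupled (they need not be brought to a common padded shape, only each to a convenient one), this is routine.
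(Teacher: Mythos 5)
Your ``only if'' direction and the opening reductions (Remark~\ref{remark_closure}, Lemma~\ref{lemma_separ_lin}) agree with the paper, but the central step of your ``if'' direction has a genuine gap: you assert that closedness of $\G\un{a}$ forces the linear span of the nonzero entries to be a subalgebra of $\OO$, and you propose to prove this by degenerating along a standard one-parameter subgroup to a point of smaller ``algebra-dimension''. That claim is not justified and is in fact false in general: a tuple can have a closed orbit while its span is a proper subspace of the algebra it generates. For instance, the pair $(\uu_1,\vv_1)$ spans a $2$-dimensional subspace that is not a subalgebra (it generates the quaternion algebra $\MM$, since $\uu_1\vv_1=e_1$ and $\vv_1\uu_1=e_2$), exactly as the pair of matrix units $(E_{12},E_{21})$ in $M_2^2$ generates the semisimple algebra $M_2$ and therefore has closed conjugation orbit; closedness is governed by the algebra generated by the entries, not by their span, so no degeneration of the kind you describe exists. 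Consequently your argument never reaches the hypothesis of the right-hand side of the proposition (and the same problem occurs for $\un{b}$, whose span must also be a subalgebra in the statement). A secondary error: the normalizations of $\un{a}$ and $\un{b}$ cannot be ``decoupled'' — Lemma~\ref{lemma_separ_lin} requires the \emph{same} matrix $A\in\GL_n$ to act on both tuples, which is why the paper performs a single simultaneous change of coordinates placing $\un{a}$ and $\un{b}$ into disjoint blocks of at most $8$ coordinates each.

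The paper closes exactly this gap by a different device that your proposal is missing: it enlarges both tuples by appending the values of suitable words $v_1,\ldots,v_r$ and $w_1,\ldots,w_s$, so that the spans of the enlarged tuples become the generated algebras $\algA$ and $\algB$. Three things then have to be verified: the enlarged orbits are still closed (the word map $\OO^n\to\OO^{r+s}$ is a $\G$-equivariant morphism, so the enlargement is a graph embedding); coincidence of orbits is unaffected; and — the delicate point — non-separation by $\set{n}^{(8)}$ passes to non-separation of the enlarged tuples by $\set{n+r+s}^{(8)}$. This last step uses Lemma~\ref{lemma1} and formula~\Ref{eq_n} to rewrite any $f\in\set{n+r+s}^{(8)}$ evaluated at the enlarged tuple as a polynomial in traces of left-normed increasing products of the original entries and norms, and then uses the fact that each normalized tuple has at most $8=\dim\OO$ nonzero coordinates to kill every such trace of length $>8$, so all values are controlled by $\set{n}^{(8)}(\un{a})=\set{n}^{(8)}(\un{b})$. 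Without this enlargement-and-bookkeeping mechanism (or a correct substitute for your false span-versus-algebra claim), the reduction to the special form in the statement does not go through.
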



\begin{proof}
We only have to prove ``if''{} part of the statement. Assume that $\un{a},\un{b}\in\OO^n$ are not separated by $\set{n}^{(8)}$ for some $n>0$. To obtain the required, we will show that $\G\un{a}=\G\un{b}$. 

By Remark~\ref{remark_closure}  we can assume that $\G\un{a}$ and $\G\un{b}$ are closed. 

\medskip
{\noindent \it Claim 1.} Given an $\FF$-basis $a'_1,\ldots,a'_l$ of $\FF$-span of $a_1,\ldots,a_n$, without loss of generality, we can assume that $\un{a}=(a'_1,\ldots,a'_l,0,\ldots,0)\in\OO^n$. 
\medskip

To prove Claim 1, we consider $A\in\GL_n$ such that $\un{a}A=(a'_1,\ldots,a'_l,0,\ldots,0)$. Parts (a), (b), (c) of Lemma~\ref{lemma_separ_lin} imply that we can consider $\un{a}A,\un{b}A$ instead of $\un{a},\un{b}$ and Claim 1 is proven.
\medskip

Denote by $\algA$ the algebra generated by $a_1,\ldots,a_n$ and by $\algB$ the algebra generated by $b_1,\ldots,b_n$. Without loss of generality we can assume that $\dim{\algA}\geq \dim{\algB}$. 

\medskip
{\noindent \it Claim 2.} Without loss of generality, we can assume that $\FF$-span of $a_1,\ldots,a_n$ is $\algA$ and $\FF$-span of $b_1,\ldots,b_n$ is $\algB$.
\medskip

Let us prove Claim 2. It is an easy exercise in the linear algebra that there exists $A\in\GL_n$ such that $\un{a}A=(a'_1,\ldots,a'_l,0,\ldots,0)$ and $\un{b}A=(0,\ldots,0,b'_d,\ldots,b'_t,0,\ldots,0)$, where $a'_1,\ldots,a'_l$ is a basis for $\FF$-span of $a_1,\ldots,a_n$ and $b'_d,\ldots,b'_t$ is a basis for $b_1,\ldots,b_n$.  Similarly to Claim 1, without loss of generality, we can take $\un{a}A,\un{b}A$ instead of $\un{a},\un{b}$, i.e., we assume that 
$$\un{a}=(a_1,\ldots,a_l,0,\ldots,0) \text{ and }\un{b}=(0,\ldots,0,b_d,\ldots,b_t,0,\ldots,0),$$ 
where $l\leq 8=\dim\OO$ and $t-d+1\leq 8$. There exist words $v_{1},\ldots,v_{r}$ of  $\FF\{\XX\}_n$ such that the $\FF$-span of the set $a_1,\ldots,a_n,v_{1}(\un{a}), \ldots, v_{r}(\un{a})$ is $\algA$. Similarly, there exist words $w_{1},\ldots,w_{s}$ of  $\FF\{\XX\}_n$ such that the $\FF$-span of the set $b_1,\ldots,b_n,w_{1}(\un{b}), \ldots, w_{s}(\un{b})$ is $\algB$. 

\tb{Since the map $\OO^n\to \OO^{r+s}$ given by $\un{x}\to (v_1(\un{x}), \ldots, v_{r}(\un{x}), w_{1}(\un{x}), \ldots, w_{s}(\un{x}))$ is a morphism of affine algebraic varieties}, the $\G$-orbits of  
$$\un{c}_1=(a_1,\ldots,a_n,v_{1}(\un{a}), \ldots, v_{r}(\un{a}), w_{1}(\un{a}), \ldots, w_{s}(\un{a})),$$  
$$\un{c}_2=(b_1,\ldots,b_n,v_{1}(\un{b}), \ldots, v_{r}(\un{b}), w_{1}(\un{b}), \ldots, w_{s}(\un{b}))$$ 
are closed. Obviously, $\G\un{a}=\G\un{b}$ if and only if $\G \un{c}_1=\G \un{c}_2$. By Lemma~\ref{lemma1} and formula~\Ref{eq_n}, for any $f\in \set{n+r+s}^{(8)}$ we have that $f(\un{c}_1)$ is a non-associative polynomial in $\tr((\ldots(a_{i_1} a_{i_2})\ldots )a_{i_k})$ and $n(a_i)$ for $1\leq i_1<\cdots<i_k\leq n$ and $1\leq i\leq n$. But this trace is zero in case $k>8$ by the construction of $\un{a}$. The same fact holds also for $f(\un{c}_2)$. Thus,   
$\un{a}$ and $\un{b}$ are not separated by $\set{n}^{(8)}$ if and only if $\un{c}_1$ and $\un{c}_2$ are not separated by $\set{n+r+s}^{(8)}$. Therefore, we can consider $\un{c}_1,\un{c}_2$ instead of $\un{a},\un{b}$ and Claim 2 is proven.

Since Claims 1 and 2 imply that $\un{a}$, $\un{b}$ satisfy conditions from the formulation of the lemma, we obtain that $\G\un{a}=\G\un{b}$. 
\end{proof}

\begin{lemma}\label{lemma_remark_1.3.1}
\begin{enumerate}
\item[1.] For every $a\in\MM$ with $\tr(a)=1$ and $n(a)=0$ there exists $g$ from the stabilizer $\Stab_{\G}(\MM)=\{g\in \G\,|\, g\,\MM\subset \MM\}$ such that $ga=e_1$.

\item[2.] For every $a\in\MM$ with $\tr(a)=0$ and $n(a)=1$ there exists $g\in\Stab_{\G}(\MM)$ such that $g a\in\{1_{\OO}, 1_{\OO} + \uu_1\}$.

\item[3.] Given non-zero $\ga\in\FF$, there exists $\xi_{\ga}\in  \Stab_{\G}(\MM)$ such that for every $\al_1,\ldots,\al_4\in\FF$ we have
$$\xi_{\ga}\matr{\al_1}{(\al_2,0,0)}{(\al_3,0,0)}{\al_4} = \matr{\al_1}{(\ga\al_2,0,0)}{(\ga^{-1} \al_3,0,0)}{\al_4}.$$

\item[4.] Assume that $\un{a}=(e_1,e_2)$ and $\un{b}\in\MM^2$ satisfy $\set{n}^{(2)}\!(\un{a})=\set{n}^{(2)}\!(\un{b})$. Then  there exists  $g\in \Stab_{\G}(\MM)$ such that $gb_1 = e_1$ and $gb_2 \in\{e_2, e_2 + \uu_1, e_2 + \vv_1\}$. 

\item[5.] If $b\in\MM$ satisfies $\tr(b)=n(b)=\tr(e_1 b)=0$, then $b\in \FF \uu_1$ or $b\in \FF \vv_1$.
\end{enumerate}
\end{lemma}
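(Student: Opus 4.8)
The plan is to reduce all five statements to elementary computations with $2\times2$ matrices through the isomorphism $\mathcal{F}\colon M_2(\FF)\to\MM$, which preserves the trace and (as one checks at once) the norm, sending it to the determinant, together with Lemma~\ref{lemma_Grishkov} to lift the needed automorphisms of $\MM$ to elements of $\Stab_{\G}(\MM)$, plus one explicit one-parameter family inside $\SL_3\subset\G$ for the rescalings in parts 3 and 4. The key general fact is that $M_2(\FF)$ is central simple, so every automorphism of it is inner (Skolem--Noether); hence any two conjugate matrices in $M_2(\FF)$ lie in a single $\Aut(\MM)$-orbit, and by Lemma~\ref{lemma_Grishkov} the conjugating automorphism extends to an element of $\G$, which then necessarily stabilizes $\MM$ setwise.

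For parts 1 and 2 I would pass to $\widehat a=\mathcal{F}^{-1}(a)\in M_2(\FF)$, which has trace $\tr(a)$ and determinant $n(a)$, and read off its conjugacy class. In part 1, $\tr(a)=1$ and $n(a)=0$ give characteristic polynomial $t^2+t=t(t+1)$, whose roots $0,1$ are distinct since $\Char\FF=2$; thus $\widehat a$ is conjugate to $\mathrm{diag}(1,0)=\mathcal{F}^{-1}(e_1)$, and the corresponding element of $\Stab_{\G}(\MM)$ sends $a$ to $e_1$. In part 2, $\tr(a)=0$ and $n(a)=1$ give characteristic polynomial $t^2+1=(t+1)^2$, so $\widehat a$ is either the identity matrix, with $a=1_{\OO}$, or a single Jordan block, conjugate to $\matr{1}{1}{0}{1}=\mathcal{F}^{-1}(1_{\OO}+\uu_1)$; in either case the conjugating automorphism lifts to $\Stab_{\G}(\MM)$, giving $ga\in\{1_{\OO},1_{\OO}+\uu_1\}$.

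For part 3 I would take $\xi_{\ga}$ to be the automorphism of $\OO$ defined by the element $\mathrm{diag}(\ga,\ga^{-1},1)\in\SL_3$ (determinant $1$); by the formula for the $\SL_3$-action in Section~\ref{section_G2} it fixes $e_1$ and $e_2$, sends $\uu_1\mapsto\ga\uu_1$ and $\vv_1\mapsto\ga^{-1}\vv_1$, and maps $\MM$ into $\MM$, which is precisely the asserted formula. For part 4, the hypothesis $\set{n}^{(2)}(\un{a})=\set{n}^{(2)}(\un{b})$ unwinds to $\tr(b_1)=\tr(b_2)=1$, $n(b_1)=n(b_2)=0$ and $\tr(b_1b_2)=\tr(e_1e_2)=0$ (using $e_1e_2=0$). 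By part 1, after replacing $\un{b}$ by $g\un{b}$, I may assume $b_1=e_1$; writing $b_2=\matr{\al_1}{(\al_2,0,0)}{(\al_3,0,0)}{\al_4}$, the relations become $\al_1+\al_4=1$, $\al_1\al_4-\al_2\al_3=0$ and $\tr(e_1b_2)=\al_1=0$, forcing $\al_1=0$, $\al_4=1$, $\al_2\al_3=0$; so $b_2$ equals $e_2$, or $e_2+\al_2\uu_1$ with $\al_2\neq0$, or $e_2+\al_3\vv_1$ with $\al_3\neq0$, and applying the appropriate $\xi_{\ga}$ from part 3 (which fixes $e_1$) rescales the off-diagonal entry to $1$. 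Part 5 is the degenerate case of the same computation: for $b=\matr{\al_1}{(\al_2,0,0)}{(\al_3,0,0)}{\al_4}\in\MM$, the conditions $\tr(b)=\al_1+\al_4=0$ and $\tr(e_1b)=\al_1=0$ give $\al_1=\al_4=0$, and then $n(b)=-\al_2\al_3=0$ forces $\al_2=0$ or $\al_3=0$, i.e.\ $b\in\FF\uu_1$ or $b\in\FF\vv_1$.

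I expect the only delicate point to be the bookkeeping around $\Stab_{\G}(\MM)$ versus $\Aut(\MM)$ — making sure Lemma~\ref{lemma_Grishkov} really supplies, for each required conjugation of $M_2(\FF)$, an element of $\G$ fixing $\MM$ setwise — together with the identification of the two characteristic-two normal forms used in parts 1 and 2; the remaining steps are routine linear algebra within the octonion multiplication.
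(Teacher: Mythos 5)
Your proposal is correct and follows essentially the same route as the paper: pass to $M_2(\FF)$ via $\mathcal{F}$ (which turns $\tr$ into the matrix trace and $n$ into the determinant), put $\mathcal{F}^{-1}(a)$ in normal form by $\GL_2$-conjugation, lift via Lemma~\ref{lemma_Grishkov}, and finish parts 4 and 5 by the same direct entry computation. The only variation is in part 3, where you realize $\xi_{\ga}$ directly as the image of $\mathrm{diag}(\ga,\ga^{-1},1)\in\SL_3\subset\G$ instead of lifting the conjugation by $\mathrm{diag}(1,\ga)\in\GL_2$ through Lemma~\ref{lemma_Grishkov}; both constructions are valid, and yours has the small advantage of exhibiting the stabilizing element explicitly (note also that the roots $0,1$ in part 1 are distinct in any characteristic, not only in characteristic two).
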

\begin{proof}
\medskip
\noindent{\bf 1.}  For $A=\mathcal{F}^{-1}(a)$ we have $\tr(A)=1$ and $\det(A)=0$. Hence there exists $g\in \GL_2$ such that $g^{-1} A g=\matr{1}{0}{0}{0}$ and Lemma~\ref{lemma_Grishkov} completes the proof. 

\medskip
\noindent{\bf 2.}  For $A=\mathcal{F}^{-1}(a)$ we have $\tr(A)=0$ and $\det(A)=1$. Hence there exists $g\in \GL_2$ such that $g^{-1} A g=\matr{\la}{1}{0}{\la}$ or $g^{-1} A g=\matr{\la_1}{0}{0}{\la_2}$ for some $\la,\la_1,\la_2$ and Lemma~\ref{lemma_Grishkov} completes the proof. 

\medskip
\noindent{\bf 3.} Given $g=\matr{1}{0}{0}{\ga}\in \GL_2$, we have 
$$g^{-1}\matr{\al_1}{\al_2}{\al_3}{\al_4} g = \matr{\al_1}{\ga\al_2}{\ga^{-1} \al_3}{\al_4}.$$
Lemma~\ref{lemma_Grishkov} concludes the proof.

\medskip
\noindent{\bf 4.} By part 1 we assume that $b_1=e_1$. Denote  $\mathcal{F}^{-1}(b_2)=B_2=\matr{\be_1}{\be_2}{\be_3}{\be_4}$. Since $0=\tr(a_1 a_2)=\tr(b_1 b_2)$, we obtain $\be_1=0$. The equalities $\tr(b_2)=1$ and $n(b_2)=0$ imply $\be_4=1$ and $\be_2\be_3=0$. Part 3 concludes the proof. 

\medskip
\noindent{\bf 5.} Denote  $\mathcal{F}^{-1}(b)=B=\matr{\be_1}{\be_2}{\be_3}{\be_4}$. Since $0=\tr(e_1 b)=\be_1$, the equalities $\tr(b)=n(b)=0$ conclude the proof.

\end{proof}

\begin{lemma}\label{lemma_separ1} 
Assume that $\un{a}=(a_1,0,\ldots,0)\in \MM^n$ and $\un{b}\in\MM^n$ are not  separated by $\set{n}^{(2)}$, where $a_1\in\{1_{\OO}, e_1\}$ and $\dim(\alg(\un{b}))\leq 1$. Then $\G\,\un{a} = \G\, \un{b}$.
\end{lemma}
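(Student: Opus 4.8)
The plan is to reduce everything to the well-understood situation inside the quaternion algebra $\MM\simeq M_2(\FF)$ and then invoke the Hilbert--Mumford criterion (the ``if'' part, Section~\ref{subsection_1gr}) to conclude that the two orbits coincide. First I would deal with $\un{b}$: since $\dim(\alg(\un{b}))\leq1$, the algebra generated by $b_1,\dots,b_n$ is spanned by a single element $c\in\MM$, so each $b_i=\mu_i c$ for scalars $\mu_i$. Thus $\un{b}=\un{b}'B$ for a suitable $B\in\GL_n$ with $\un{b}'=(c,0,\dots,0)$, and by Lemma~\ref{lemma_separ_lin} it is harmless to assume $\un{b}=(c,0,\dots,0)$ (one must carry $\un{a}$ along under the same $B$, which preserves its shape since $\un{a}$ is already concentrated in the first coordinate). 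After this reduction the hypothesis that $\un{a}$ and $\un{b}$ are not separated by $\set{n}^{(2)}$ becomes a small finite list of scalar equalities: $\tr(a_1)=\tr(c)$, $\tr(a_1^2)=\tr(c^2)$ (equivalently $n(a_1)=n(c)$ via~\Ref{eq_n}), and these already pin down the Jordan-type of $c$ relative to $a_1$.

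Next I would split into the two cases $a_1=1_{\OO}$ and $a_1=e_1$. If $a_1=1_{\OO}$, then $\tr(a_1)=0$ (here $\Char\FF=2$) and $n(a_1)=1$, so $\tr(c)=0$ and $n(c)=1$; by part~2 of Lemma~\ref{lemma_remark_1.3.1} there is $g\in\Stab_{\G}(\MM)$ with $gc\in\{1_{\OO},1_{\OO}+\uu_1\}$. If $gc=1_{\OO}$ we are done immediately. If $gc=1_{\OO}+\uu_1$, I would use the standard one-parameter subgroup $\theta_{\un{\la}}$ with, say, $\un{\la}=(1,0,-1)$ (so that $\uu_1$ is scaled by $t$): then $\lim_{t\to0}\theta_{\un{\la}}(t)(1_{\OO}+\uu_1)=1_{\OO}$, which shows $1_{\OO}\in\ov{\G(1_{\OO}+\uu_1)}$, hence $(1_{\OO},0,\dots,0)$ lies in the closure of $\G\un{b}$; but both orbits are closed (or: they lie in the same fibre over $\K n^{\G}$ — here one uses that they are not separated, Remark~\ref{remark_closure}), so $\G\un{a}=\G\un{b}$. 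If $a_1=e_1$, then $\tr(a_1)=1$ and $n(a_1)=0$, forcing $\tr(c)=1$, $n(c)=0$; part~1 of Lemma~\ref{lemma_remark_1.3.1} gives $g\in\Stab_{\G}(\MM)$ with $gc=e_1$, and we are done directly.

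The step I expect to be the genuine (if modest) obstacle is the case $gc=1_{\OO}+\uu_1$: one must be careful that the non-separation hypothesis, which a priori only bounds the degree-$\leq2$ trace invariants, genuinely suffices to conclude $\G\un{a}=\G\un{b}$ rather than merely that one orbit meets the closure of the other. The clean way to handle this is to observe that by Remark~\ref{remark_closure} the condition $\set{n}^{(2)}(\un a)=\set n^{(2)}(\un b)$ — together with the fact that all higher trace invariants of $\un a=(1_\OO,0,\dots,0)$ and of $\un b$ are themselves polynomials in degree-$\leq 2$ data (apply Lemma~\ref{lemma1} and~\Ref{eq_n}, noting both algebras are $1$-dimensional here) — actually forces $f(\un a)=f(\un b)$ for \emph{all} $f\in\K n^{\G}$. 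Hence $\un a$ and $\un b$ lie in the same fibre, which contains a unique closed orbit; since $(1_\OO,0,\dots,0)$ has closed orbit (its entries span a unital, hence semisimple, subalgebra — alternatively cite Remark~\ref{remak_closed}) and it lies in the closure of $\G\un b$, we get $\G\un a=\G\un b$. I would write the argument so that this closure-plus-uniqueness reasoning is stated once and then reused verbatim in the $gc=1_\OO+\uu_1$ subcase. All remaining computations (the explicit forms of $\theta_{\un\la}(t)$ on $1_\OO+\uu_1$, the trace identities) are routine and I would only indicate them.
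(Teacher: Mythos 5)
There are two genuine gaps. First, the opening reduction via Lemma~\ref{lemma_separ_lin} does not work as you claim: if $\un{b}=(\mu_1c,\dots,\mu_nc)$ and you pick $A\in\GL_n$ with $\un{b}A=(c,0,\dots,0)$, then $(\un{a}A)_i=\al_{1i}a_1$, so $\un{a}A=(\al_{11}a_1,\al_{12}a_1,\dots,\al_{1n}a_1)$; this has the shape $(a_1,0,\dots,0)$ only when the first row of $A$ is $(1,0,\dots,0)$, and that constraint is incompatible with annihilating nonzero $\mu_i$'s for $i\geq 2$ (the relevant $(n-1)\times(n-1)$ block of $A$ would have to kill the nonzero vector $(\mu_2,\dots,\mu_n)$, contradicting invertibility). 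So you cannot assume $\un{b}=(c,0,\dots,0)$; the vanishing of $b_2,\dots,b_n$ is part of what must be \emph{proved}, and it is exactly here that the remaining degree-$\leq 2$ conditions are needed: writing $b_i=\be_i b_1$ (possible since $\alg(\un b)$ is one-dimensional), one gets $\be_i=0$ from $n(b_i)=n(0)=0$ when $a_1=1_{\OO}$, and from $\tr(b_1b_i)=\tr(a_1\cdot 0)=0$ when $a_1=e_1$. Your proposal never performs this step.

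Second, the subcase $gb_1=1_{\OO}+\uu_1$ must be \emph{eliminated}, not absorbed by a limit argument: $b_1^2=1_{\OO}$ and $b_1$ are linearly independent, contradicting $\dim(\alg(\un b))\leq 1$. Your closure-plus-unique-closed-orbit reasoning fails on two counts. (i) To place $\un a$ and $\un b$ in the same fiber you need $f(\un a)=f(\un b)$ for \emph{all} $f\in\K{n}^{\G}$; in characteristic two the generators of $\K{n}^{\G}$ are unknown (only the trace subalgebra $T_n$ is controlled), so this cannot be deduced from agreement on $\set{n}^{(2)}$ — and assuming it would be circular, since separation is what is being proved. (ii) Even granting the same fiber, you would only conclude $\G\un a\subseteq\overline{\G\un b}$, not $\G\un a=\G\un b$; the lemma does not assume $\G\un b$ closed, and indeed $\G(1_{\OO},0,\dots,0)\neq\G(1_{\OO}+\uu_1,0,\dots,0)$, so in this subcase your argument would establish a false equality. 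The paper's proof uses your Lemma~\ref{lemma_remark_1.3.1} normalizations for $b_1$ exactly as you do, but then finishes by the two elementary observations above, producing the literal equality $\un a=g\un b$.
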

\begin{proof} 

\noindent{\bf 1.} Let $a_1=1_{\OO}$. Since $\tr(b_1)=0$ and $n(b_1)=1$, by part 2 of Lemma~\ref{lemma_remark_1.3.1} we can assume that $b_1=1_{\OO}$ or $b_1=1_{\OO}+\uu_1$. 

In the first case the condition $\dim(\alg(\un{b}))\leq 1$ implies  $\un{b}=(1_{\OO}, \be_2 1_{\OO},\ldots, \be_n 1_{\OO})$ for some $\be_2,\ldots,\be_n\in\FF$. Since $0=n(b_i)=\be_i^2$ for all $1<i\leq n$, we have $\un{a}=\un{b}$.

In the second case we have that $b_1$ and $b_1^2=1_{\OO}$ are linearly independent; a contradiction.

\medskip
\noindent{\bf 2.} Let $a_1=e_1$. Since $\tr(b_1)=1$ and $n(b_1)=0$, by part 1  of Lemma~\ref{lemma_remark_1.3.1} we can assume that $b_1=e_1$. Then the condition $\dim(\alg(\un{b}))\leq 1$ implies that $\un{b}=(e_1, \be_2 e_1,\ldots, \be_n e_1)$ for some $\be_2,\ldots,\be_n\in\FF$. For each $1<i\leq n$ we have $0=\tr(a_1 0 )=\tr(b_1 b_i) = \be_i$. Therefore,  $\un{a}=\un{b}$.  
\end{proof}

\begin{lemma}\label{lemma_separ2} 
Assume that $\un{a}=(e_1,e_2,0,\ldots,0)\in \MM^n$ and $\un{b}\in\MM^n$ are not separated by $\set{n}^{(2)}$ and  $\dim(\alg(\un{b}))\leq 2$. Then $\G\,\un{a} = \G\, \un{b}$.
\end{lemma}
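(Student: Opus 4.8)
The plan is to reduce everything to the quaternion case $\MM \simeq M_2(\FF)$ and then exploit the known theory of $\GL_2$-conjugation invariants of $2\times 2$ matrices together with Lemma~\ref{lemma_Grishkov}. First I would use the action of $\GL_n$ on the right (Lemma~\ref{lemma_separ_lin}) to bring $\un b$ into the normal form $(b_1,\ldots,b_m,0,\ldots,0)$ where $b_1,\ldots,b_m$ is a basis of $\alg(\un b)$; since this action preserves both the $\G$-orbit equivalence and non-separation by $\set{n}^{(2)}$, and since a priori $\dim\alg(\un b)\le 2$, I am left with the subcases $m=0$, $m=1$, $m=2$. The cases $m\le 1$ are covered by Lemma~\ref{lemma_separ1} (after checking the trivial case $m=0$, where $\un b=0$ and $\tr(b_i)=n(b_i)=0$ forces, comparing with $\un a=(e_1,e_2,0,\ldots)$ which has $\tr(a_1)=1$, that this cannot happen), so the substance is the case $m=2$, i.e. $b_1,b_2$ form a basis of a two-dimensional subalgebra $\algB\subset\MM$.

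Next I would apply part 4 of Lemma~\ref{lemma_remark_1.3.1}: since $\set{n}^{(2)}(\un a)=\set{n}^{(2)}(\un b)$, in particular $\tr(b_1)=\tr(a_1)=1$, $n(b_1)=n(a_1)=0$, $\tr(b_1 b_2)=\tr(a_1a_2)=\tr(e_1e_2)=0$, $\tr(b_2)=\tr(e_2)=1$, $n(b_2)=n(e_2)=0$; hence there exists $g\in\Stab_{\G}(\MM)$ with $gb_1=e_1$ and $gb_2\in\{e_2,\, e_2+\uu_1,\, e_2+\vv_1\}$. Replacing $\un b$ by $g\un b$ (which changes neither $\G$-orbits nor the values of $\set{n}^{(2)}$, since these invariants are $\G$-invariant), I must now rule out the two "exotic" possibilities $b_2=e_2+\uu_1$ and $b_2=e_2+\vv_1$, and in the remaining case $b_2=e_2$ conclude $\G\un a=\G\un b$. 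For the exotic cases I would use the remaining degree-$\le 2$ invariants to derive a contradiction: e.g. if $b_2=e_2+\uu_1$ then one computes $b_1 b_2 = e_1(e_2+\uu_1)$ and $b_2 b_1 = (e_2+\uu_1)e_1$, and compares $\tr(b_1 b_2)$, $n(b_2)$, or a product like $\tr(b_1(b_1 b_2))$ or $n(b_1+b_2)$ against the corresponding value for $\un a=(e_1,e_2,0,\ldots)$; the point is that $\alg(\un b)$ would then be $3$-dimensional (it contains $e_1,e_2,\uu_1$ or $e_1,e_2,\vv_1$), contradicting $\dim\alg(\un b)\le 2$. Actually this last observation is the cleanest route: $b_2=e_2+\uu_1$ gives $b_1 b_2 = \uu_1$ (since $e_1 e_2=0$, $e_1\uu_1=\uu_1$ up to the precise multiplication table), so $\uu_1\in\alg(\un b)$, whence $e_1,e_2,\uu_1$ are all in $\alg(\un b)$, forcing $\dim\ge 3$, a contradiction; symmetrically for $b_2=e_2+\vv_1$. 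So in fact $b_2=e_2$, giving $\un b=(e_1,e_2,0,\ldots,0)=\un a$ and trivially $\G\un a=\G\un b$.

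In the remaining subcase $m=2$ with $b_2=e_2$ we already have $\un a=\un b$ on the nose, so $\G\un a=\G\un b$; and the subcases $m\le 1$ were handled by Lemma~\ref{lemma_separ1} once we note $a_1=e_1\in\{1_{\OO},e_1\}$ and re-index so that the single nonzero entry of (the normalized) $\un a$ sits in position $1$ — here one should be slightly careful, because $\un a=(e_1,e_2,0,\ldots)$ has \emph{two} nonzero entries, so when $\dim\alg(\un b)\le 1$ one must instead argue directly: from $\tr(b_1)=1$, $n(b_1)=0$ normalize $b_1=e_1$ by part 1 of Lemma~\ref{lemma_remark_1.3.1}, then $\dim\alg(\un b)\le 1$ forces every $b_i\in\FF e_1$, and then $\tr(b_1 b_2)=\tr(e_1 e_2)=0$ versus $\tr(b_1 b_2)=\be_2$ for $b_2=\be_2 e_1$ gives $\be_2=0$, while $\tr(b_2)=\tr(e_2)=1$ gives $\be_2=1$; contradiction, so this subcase is vacuous. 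I expect the main obstacle to be precisely this bookkeeping: making sure that, at each normalization step, the group element used lies in $\Stab_{\G}(\MM)$ (so that we stay inside $\MM^n$ and can keep using parts 1--5 of Lemma~\ref{lemma_remark_1.3.1}), and carefully tracking which of the low-degree invariants in $\set{n}^{(2)}$ — the norms $n(b_i)$, the traces $\tr(b_i)$, and the bilinear traces $\tr(b_i b_j)$ — are forced to agree with their values on $\un a$; the multiplication-table computations themselves (e.g. $e_1\uu_1=\uu_1$, $e_1 e_2=0$, $\uu_1\vv_1=e_1$) are routine given the explicit product formula in Section~\ref{section_O}.
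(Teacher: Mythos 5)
Your central case analysis is the paper's: apply part 4 of Lemma~\ref{lemma_remark_1.3.1} to get $b_1=e_1$ and $b_2\in\{e_2,\,e_2+\uu_1,\,e_2+\vv_1\}$, and kill the two exotic options because $b_1b_2=\uu_1$ (respectively $b_2b_1=\vv_1$) would place the three linearly independent elements $e_1,e_2,\uu_1$ (respectively $e_1,e_2,\vv_1$) inside $\alg(\un{b})$, contradicting $\dim\alg(\un{b})\leq2$. Your direct treatment of the degenerate possibility $\dim\alg(\un{b})\leq1$ is correct (and also shows you rightly noticed that Lemma~\ref{lemma_separ1} is not applicable here, since $\un{a}$ has two nonzero entries), though it is not actually needed: the uniform argument covers it.

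The genuine gap is in the main case $b_2=e_2$, where you conclude $\un{b}=(e_1,e_2,0,\ldots,0)=\un{a}$ ``on the nose''. That conclusion rests on your opening move of normalizing $\un{b}$ by the right $\GL_n$-action so that only its first $m\leq2$ entries are nonzero, and this reduction is not available: in Lemma~\ref{lemma_separ_lin} the \emph{same} matrix $A$ acts on $\un{a}$ and $\un{b}$, so choosing $A$ to bring $\un{b}$ to the form $(b_1,\ldots,b_m,0,\ldots,0)$ simultaneously replaces $\un{a}=(e_1,e_2,0,\ldots,0)$ by $\un{a}A$, whose entries are arbitrary combinations $\la e_1+\mu e_2$; after that neither part 4 of Lemma~\ref{lemma_remark_1.3.1} nor your later trace comparisons apply, so the two steps cannot coexist. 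Without that reduction nothing yet forces $b_i=0$ for $i\geq3$: the degree-$\leq2$ equalities only give $\tr(b_i)=n(b_i)=\tr(b_1b_i)=\tr(b_2b_i)=0$, which by part 5 of Lemma~\ref{lemma_remark_1.3.1} still allows each $b_i$ to be a nonzero scalar multiple of $\uu_1$ or of $\vv_1$. The missing (easy) step, and exactly how the paper finishes, is to invoke part 5 of Lemma~\ref{lemma_remark_1.3.1} for each $3\leq i\leq n$ and then use $\dim\alg(\un{b})\leq2$ together with $e_1,e_2\in\alg(\un{b})$ to force $b_i=0$, whence $\un{a}=\un{b}$.
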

\begin{proof}  By part 4  of Lemma~\ref{lemma_remark_1.3.1} we can assume that $b_1=e_1$ and $b_2\in\{e_2, e_2 + \uu_1, e_2 + \vv_1\}$.

Let $b_2=e_2$. For $3\leq i\leq n$ part 5 of Lemma~\ref{lemma_remark_1.3.1} implies that $b_i\in \FF \uu_1$ or $b_i\in \FF \vv_1$, since $\tr(b_i)=n(b_i)=\tr(b_1 b_i)=0$. It follows from  $\dim(\alg(\un{b}))\leq 2$ that $b_i=0$ for all $3\leq i\leq n$. Therefore, $\un{a}=\un{b}$.

In case $b_2=e_2+\uu_1$ we consider $b_1b_2=\uu_1$ and obtain that $\{e_1,\uu_1,e_2\}\subset\alg(\un{b})$; a contradiction. 

In case $b_2=e_2+\vv_1$ we consider $b_2b_1=\vv_1$ and obtain that $\{e_1,\vv_1,e_2\}\subset\alg(\un{b})$; a contradiction. 
\end{proof}

\begin{lemma}\label{lemma_separ3} 
If $\un{a}=(e_1,e_2,\uu_1,\vv_1,0,\ldots,0)\in \MM^n$ and $\un{b}\in\MM^n$ are not separated by  $\set{n}^{(3)}$, then $\G\,\un{a} = \G\, \un{b}$. 
\end{lemma}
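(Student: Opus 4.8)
The plan is to transfer the problem into the quaternion subalgebra $\MM\simeq M_2(\FF)$ and to pin down $\un b$ one coordinate at a time using the low-degree invariants. Recall that $\mathcal F\colon M_2(\FF)\to\MM$ is an isomorphism preserving the trace and identifying the norm $n$ with the determinant, and that $\MM$ is associative; hence for $\un b\in\MM^n$ every element of $\set{n}^{(3)}$ evaluates on $\un b$ to one of $n(b_i)$, $\tr(b_i)$, $\tr(b_ib_j)$ with $i<j$, or $\tr((b_ib_j)b_k)$ with $i<j<k$ (the bracketing being irrelevant). Writing $a_1=e_1,a_2=e_2,a_3=\uu_1,a_4=\vv_1$ and using $\uu_1\vv_1=e_1$, a direct check shows that among these invariants of $\un a$ only $\tr(a_3a_4)=1$ and $\tr((a_1a_3)a_4)=1$ are non-zero, all others vanishing.

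First I would normalise $b_1$. Since $\tr(b_1)=\tr(a_1)=1$ and $n(b_1)=n(a_1)=0$, part~1 of Lemma~\ref{lemma_remark_1.3.1} gives $g\in\Stab_\G(\MM)$ with $gb_1=e_1$; replacing $\un b$ by $g\un b$ (which changes neither membership in $\MM^n$ nor the values of the $\G$-invariants) we may assume $b_1=e_1$. One computes that $\tr(e_1b_i)$ is the upper-left entry of $b_i$, and $\tr(e_1b_i)=\tr(a_1a_i)=0$ for $i\geq2$; combined with $\tr(b_i)=\tr(a_i)$ and $n(b_i)=0$ this forces
$$b_2=\matr{0}{(s,0,0)}{(t,0,0)}{1}\ \text{with}\ st=0,\qquad b_i=\matr{0}{(\ga_i,0,0)}{(\de_i,0,0)}{0}\ \text{with}\ \ga_i\de_i=0\ \ (i\geq3).$$
The only non-trivial degree-$3$ relation, $\tr((b_1b_3)b_4)=\ga_3\de_4=1$, gives $\ga_3,\de_4\neq0$ and hence $\de_3=\ga_4=0$, so $b_3=\ga_3\uu_1$ and $b_4=\de_4\vv_1$ with $\ga_3\de_4=1$. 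Now apply $\xi_{\ga_3^{-1}}\in\Stab_\G(\MM)$ from part~3 of Lemma~\ref{lemma_remark_1.3.1}: it fixes $e_1$ and sends $\ga_3\uu_1\mapsto\uu_1$ and $\de_4\vv_1\mapsto\ga_3\de_4\vv_1=\vv_1$, so after this we may assume $b_3=\uu_1$, $b_4=\vv_1$ (and $b_2$ keeps the same shape, still with $st=0$).

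It then remains to compute a few more traces. For $i\geq5$ one finds $\tr(b_3b_i)=\de_i$ and $\tr(b_4b_i)=\ga_i$, which must equal $\tr(a_3a_i)=\tr(a_4a_i)=0$, so $\ga_i=\de_i=0$ and $b_i=0$. Finally $\tr(b_2b_3)=t$ and $\tr(b_2b_4)=s$ must equal $\tr(a_2a_3)=\tr(a_2a_4)=0$, whence $b_2=e_2$. Thus, after these transformations by elements of $\G$, $\un b=\un a$, so $\un b$ lies in $\G\un a$ and $\G\un a=\G\un b$. All the computations are short multiplications inside $\MM$; the only point that needs slight care is chaining the successive $\Stab_\G(\MM)$-normalisations — in particular noticing that $\xi_{\ga_3^{-1}}$ rescales $b_3$ to $\uu_1$ and $b_4$ to $\vv_1$ simultaneously precisely because $\ga_3\de_4=1$ — so there is no real obstacle.
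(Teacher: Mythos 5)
Your proof is correct and follows essentially the same route as the paper's: normalise $b_1=e_1$ via $\Stab_{\G}(\MM)$, use the degree $\leq 3$ traces to pin down the remaining coordinates (you let $\tr((b_1b_3)b_4)=1$ fix the orientation of $b_3,b_4$ directly, where the paper instead splits into two cases and kills the swapped one by the same invariant), and rescale with $\xi_{\ga}$ from part 3 of Lemma~\ref{lemma_remark_1.3.1}. The only slip is cosmetic: $\tr(a_1)=\tr(a_2)=1$ are also non-zero invariants of $\un{a}$, contrary to your summary sentence, but you use them correctly in the argument itself.
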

\begin{proof} By part 4  of Lemma~\ref{lemma_remark_1.3.1} we can assume that $b_1=e_1$ and $b_2\in\{e_2, e_2 + \uu_1, e_2 + \vv_1\}$. Assume $3\leq i\leq n$. We have  $\tr(b_i)=n(b_i)=\tr(b_1 b_i)=0$, since $\tr(a_1 a_3)=\tr(a_1 a_4)=0$. Thus part 5 of Lemma~\ref{lemma_remark_1.3.1} implies that $b_i= \be_i\uu_1$ or $b_i=\be_i \vv_1$ for some $\be_i\in\FF$. Since $\tr(b_3b_4)=\tr(a_3a_4)=1$, we obtain that $\be_3\be_4=1$ and either $b_3=\be_3 \uu_1$, $b_4=\be_4 \vv_1$ or $b_3=\be_3 \vv_1$, $b_4=\be_4 \uu_1$ for some non-zero \tb{$\be_3,\be_4\in\FF$ with $\be_3\be_4=1$}. \tb{Hence equalities $\tr(b_3 b_2) = \tr(b_4b_2) =0$ imply that $b_2 = e_2$.} 

\medskip
\noindent{\bf 1.} Let $b_3=\be_3 \uu_1$, $b_4=\be_3^{-1} \vv_1$. By part 3 of Lemma~\ref{lemma_remark_1.3.1} we can assume that $\be_3=1$. 

Consider $5\leq i\leq n$. If $b_i=\be_i\uu_1$, then $0=\tr(a_4 a_i)=\tr(b_4 b_i)=\be_i$.  If $b_i=\be_i\vv_1$, then $0=\tr(a_3 a_i)=\tr(b_3 b_i)=\be_i$. Therefore, $\un{a}=\un{b}$. 

\medskip
\noindent{\bf 2.} In case  $b_3=\be_3 \vv_1$, $b_4=\be_3^{-1} \uu_1$ we have $0=\tr((b_1b_3)b_4) = \tr((a_1 a_3) a_4)= \tr(\uu_1 \vv_1)=1$; a contradiction.
\end{proof}

\begin{lemma}\label{lemma_separ4} 
If $\un{a}=(e_1,e_2,\uu_1,\vv_1,\uu_2,\vv_2,\uu_3,\vv_3,0,\ldots,0)\in \OO^n$ and $\un{b}\in\OO^n$ are not separated by  $\set{n}^{(3)}$, then $\G\,\un{a} = \G\, \un{b}$. 
\end{lemma}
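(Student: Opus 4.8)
The plan is to exploit that $\un a$ is a basis of the whole algebra $\OO$, together with the non-degeneracy of the bilinear form $(x,y)\mapsto\tr(xy)$ on $\OO$, in order to recover an automorphism of $\OO$ carrying $\un a$ to $\un b$ directly from the degree-$\le 3$ data. First I would verify that $(x,y)\mapsto\tr(xy)$ is non-degenerate on $\OO$ even in characteristic two: from~\Ref{eq_n} we get $\tr(x^2)=\tr(x)^2$, from~\Ref{eq3a} we get $q(x,y)=\tr(xy)+\tr(x)\tr(y)$, and since $\tr(1_\OO)=\tr(e_1)+\tr(e_2)=0$, putting $y=1_\OO$ gives $q(1_\OO,x)=\tr(x)$ for all $x$. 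If $\tr(ax)=0$ for all $x$, then $q(a,x)=\tr(a)\tr(x)=\tr(a)\,q(1_\OO,x)$ for all $x$, so $a=\tr(a)\,1_\OO$ by non-degeneracy of $q$, and then $\tr(a)=\tr(a)\tr(1_\OO)=0$, hence $a=0$. In particular the Gram matrix $[\tr(a_ia_j)]_{i,j=1}^{8}$ of the basis $a_1,\dots,a_8=e_1,e_2,\uu_1,\vv_1,\uu_2,\vv_2,\uu_3,\vv_3$ is invertible.

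Next I would transfer the hypothesis to $\un b$. Not being separated by $\set{n}^{(3)}$ gives $\tr(b_i)=\tr(a_i)$, $n(b_i)=n(a_i)$ and $\tr(b_ib_j)=\tr(a_ia_j)$ for $i<j$; combined with $\tr(x^2)=\tr(x)^2$ this yields $\tr(b_ib_j)=\tr(a_ia_j)$ for all $i,j$. By Lemma~\ref{lemma1} and~\Ref{eq_n} every $\tr((Z_iZ_j)Z_l)$ lies in the subalgebra of $\K{n}^{\G}$ generated by $\set{n}^{(3)}$ — one rewrites $\tr((Z_iZ_j)Z_l)$ as $\pm\tr$ of the sorted triple plus a polynomial in the $n(Z_s),\tr(Z_s),\tr(Z_sZ_t)$ when $i,j,l$ are distinct, and as such a polynomial otherwise — so $\tr((b_ib_j)b_l)=\tr((a_ia_j)a_l)$ for all $i,j,l$. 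For $1\le i,j\le 8$ the matrix $[\tr(b_kb_l)]_{k,l=1}^8$ equals the invertible matrix above, so $b_1,\dots,b_8$ is a basis of $\OO$. Writing $a_ia_j=\sum_k\gamma_{ijk}a_k$ and $b_ib_j=\sum_k\mu_{ijk}b_k$ for $1\le i,j\le8$, the identity $\sum_k\mu_{ijk}\tr(b_kb_l)=\tr((b_ib_j)b_l)=\tr((a_ia_j)a_l)=\sum_k\gamma_{ijk}\tr(b_kb_l)$ together with invertibility of $[\tr(b_kb_l)]$ forces $\mu_{ijk}=\gamma_{ijk}$ for all $k$.

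Then I would define the linear map $\phi\colon\OO\to\OO$ by $\phi(a_i)=b_i$ for $1\le i\le 8$. The equalities $\mu_{ijk}=\gamma_{ijk}$ say $\phi(a_ia_j)=b_ib_j=\phi(a_i)\phi(a_j)$ on the basis, so by bilinearity $\phi$ is an algebra homomorphism; being bijective (it maps a basis onto a basis), $\phi\in\Aut(\OO)=\G$. Finally, for $i>8$ we have $a_i=0$, hence $\tr(b_i)=n(b_i)=0$ and $\tr(b_ib_j)=0$ for $1\le j\le 8$; since $b_1,\dots,b_8$ span $\OO$ this gives $q(b_i,x)=\tr(b_ix)+\tr(b_i)\tr(x)=0$ for every $x\in\OO$, whence $b_i=0$ by non-degeneracy of $q$. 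Therefore $\phi\un a=\un b$, so $\G\un a=\G\un b$.

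I expect the main obstacle to be the non-degeneracy of $(x,y)\mapsto\tr(xy)$ in characteristic two: this is what makes the degree-$\le 3$ invariants already determine all the structure constants of the algebra generated by $\un b$, and it is slightly delicate since $1_\OO$ lies in the radical of $\tr$. The other point needing care is the passage from the sorted, multilinear trace invariants in $\set{n}^{(3)}$ to $\tr((b_ib_j)b_l)$ for arbitrary triples $i,j,l$, which is exactly where Lemma~\ref{lemma1} (together with~\Ref{eq_n}) enters; the remainder is bookkeeping in linear algebra.
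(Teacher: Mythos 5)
Your proof is correct and follows essentially the same route as the paper: non-degeneracy of the trace form plus the Gram matrix forces $b_1,\dots,b_8$ to be a basis with the same structure constants as $a_1,\dots,a_8$ and $b_j=0$ for $j>8$, so the linear map $a_i\mapsto b_i$ is an automorphism of $\OO$, i.e.\ an element of $\G$ carrying $\un{a}$ to $\un{b}$. The only additions are your explicit verification that $\tr(xy)$ is non-degenerate in characteristic two and the appeal to Lemma~\ref{lemma1} with~\Ref{eq_n} to pass from the sorted traces in $\set{n}^{(3)}$ to arbitrary triples, both of which the paper uses implicitly.
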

\begin{proof} Given $c_1,\ldots, c_8$, denote by $M_{c_1,\ldots,c_8}$ the Gram matrix $(\tr(c_ic_j))_{1\leq i,j\leq 8}$. Since the trace is a bilinear non-degenerate form on $\OO$ and $a_1,\ldots,a_8$ are linearly independent, we obtain that $\det(G_{a_1,\ldots,a_8})=\det(G_{b_1,\ldots,\tb{b_8}})$ is non-zero. Hence, $b_1,\ldots,b_8$ are also linearly independent. In particular,  $\FF$-span of $b_1,\ldots,b_8$ is $\OO$.

For every $1\leq i\leq 8$ and $8< j\leq n$ we have that $\tr(a_i a_j)=\tr(b_i b_j)$ is zero. Therefore, $\tr(bb_j)=0$ for all $b\in\OO$. Since $\tr$ is non-degenerate on $\OO$, we obtain $\un{b}=(b_1,\ldots,b_8,0,\ldots,0)$. 

For every $1\leq i,j\leq 8$ there exists $1\leq k_{ij}\leq 8$ and $\eta_{ij}\in\FF$ such that $a_i a_j=\eta_{ij} a_{k_{ij}}$. Therefore, for each $1\leq l\leq 8$ we have that $\tr((a_i a_j - \eta_{ij} a_{k_{ij}})a_l)=\tr((b_i b_j - \eta_{ij} b_{k_{ij}})b_l)$ is zero. Hence, $b_i b_j = \eta_{ij} b_{k_{ij}}$. Consider a linear map $f:\OO\to \OO$ defined on the basis of $\OO$ by $f(a_i)=b_i$ for all $1\leq i\leq 8$. Since the multiplication table for $a_1,\ldots,a_8$ is the same as for $b_1,\ldots,b_8$, we can see that $f\in\G$. The required is proven.
\end{proof}

The following statement is a corollary of Proposition~\ref{prop_subalg}.

\begin{cor}\label{cor_closed}
Assume $\Char{\FF}=2$ and a closed equivalence class $\mathfrak{A}\in\Omega(\OO)$ has the dimension $d\leq3$. Then one of the following sets is a basis for $\mathfrak{A}$: 
\begin{enumerate}
\item[$d=1${\rm:}] $\{1_{\OO}\}$, $\{e_1\}$;

\item[$d=2${\rm:}] $\{e_1,e_2\}$.
\end{enumerate}
\end{cor}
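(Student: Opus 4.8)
The plan is to run through every equivalence class listed in Proposition~\ref{prop_subalg} and decide which ones are closed. By Proposition~\ref{prop_subalg} each $\mathfrak{A}\in\Omega(\OO)$ of dimension $d\le 3$ has one of the eleven bases displayed there, and by Remark~\ref{remak_closed} the class $\mathfrak{A}$ is closed if and only if the $\G$-orbit of that one distinguished tuple in $\OO^{d}$ is closed. So it suffices to test the eleven tuples $\un{a}$ individually.

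\emph{Non-closed cases.} For each of the nine bases other than $\{1_{\OO}\}$, $\{e_1\}$, $\{e_1,e_2\}$ --- that is, for $\{\uu_1\}$, $\{1_{\OO},\uu_1\}$, $\{\uu_1,\vv_2\}$, $\{e_1,\uu_1\}$, $\{e_1,\vv_1\}$, $\{1_{\OO},\uu_1,\vv_2\}$, $\{e_1,e_2,\uu_1\}$, $\{e_1,\uu_1,\vv_2\}$, $\{\uu_1,\vv_2,\vv_3\}$ --- I would choose a standard one-parameter subgroup $\theta_{\un{\la}}$ (Section~\ref{subsection_1gr}) that fixes $e_1,e_2,1_{\OO}$ and simultaneously contracts to $0$ every $\uu_j$ and $\vv_j$ occurring in the basis. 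For example $\un{\la}=(1,0,-1)$ kills $\uu_1$, the choice $\un{\la}=(-1,1,0)$ kills $\vv_1$, $\un{\la}=(1,-1,0)$ kills $\uu_1$ and $\vv_2$ simultaneously, and $\un{\la}=(2,-1,-1)$ kills $\uu_1,\vv_2,\vv_3$ simultaneously. Then $\lim_{t\to 0}\theta_{\un{\la}}(t)\un{a}$ exists (all relevant exponents are $\ge 0$) and has a zero in some coordinate where $\un{a}$ is non-zero. Since this limit lies in $\overline{\G\un{a}}$ (Section~\ref{subsection_1gr}) while it cannot lie in $\G\un{a}$ (an element of $\G$ sends a non-zero octonion to a non-zero octonion), the orbit $\G\un{a}$ is not closed, so the corresponding equivalence class is not closed.

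\emph{Closed cases.} For $\{1_{\OO}\}$ the orbit is the single point $\{1_{\OO}\}$, hence closed. For $\{e_1\}$ I would show $\G e_1=\{a\in\OO\mid \tr(a)=1,\ n(a)=0\}$, a Zariski-closed subset of $\OO$. The inclusion ``$\subseteq$'' is immediate from $\tr(e_1)=1$, $n(e_1)=0$ and $\G$-invariance of $\tr$ and $n$. For ``$\supseteq$'': put such an $a$ in canonical form by Proposition~\ref{prop_ThI}; a type-$(\mathrm{K}_1)$ octonion has trace $2\al_1=0$ because $\Char\FF=2$, so $a$ must be of type (D), say $\mathrm{diag}(\al_1,\al_8)$ with $\al_1+\al_8=1$ and $\al_1\al_8=0$; hence $\{\al_1,\al_8\}=\{0,1\}$, and $a$ is $\G$-conjugate to $e_1$ (the two orderings being interchanged by $\hbar$, cf.~Proposition~\ref{prop_ThI}). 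Finally, for $\{e_1,e_2\}$ I would use $e_1+e_2=1_{\OO}$ to obtain $\G(e_1,e_2)=\{(a,\,1_{\OO}+a)\mid \tr(a)=1,\ n(a)=0\}$: if $(a,b)=g(e_1,e_2)$ then $a+b=g(1_{\OO})=1_{\OO}$ and $\tr(a)=1$, $n(a)=0$; conversely, for such a pair the previous step provides $g\in\G$ with $ga=e_1$, whence $g(1_{\OO}+a)=1_{\OO}+e_1=e_2$. Thus $\G(e_1,e_2)$ is the image of $\{a\mid \tr(a)=1,\ n(a)=0\}$ under the closed embedding $a\mapsto(a,1_{\OO}+a)$, hence closed. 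Altogether the only closed classes of dimension $\le 3$ are those listed, and in particular there is none of dimension $3$.

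The main obstacle is the proof that $\G e_1$ and $\G(e_1,e_2)$ are closed; everything else is a routine pass through one-parameter subgroups. The decisive point is the characteristic-two phenomenon that every octonion of type $(\mathrm{K}_1)$ is traceless, which forces the locus $\tr=1,\ n=0$ to collapse onto the single $\G$-orbit of $e_1$.
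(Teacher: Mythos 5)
Your proposal is correct and, for the implication actually asserted (closed $\Rightarrow$ one of the listed bases), follows the paper's proof exactly: run through the bases of Proposition~\ref{prop_subalg} and, for each of the nine excluded ones, exhibit a standard one-parameter subgroup whose limit exists, lies in $\overline{\G\un{a}}$, and cannot lie in $\G\un{a}$ (the paper detects this via a rank drop, you via a vanishing coordinate --- the same obstruction, and your choices of $\un{\la}$ all satisfy $\la_1+\la_2+\la_3=0$ and give non-negative exponents on the relevant components). Your additional verification that $\G 1_{\OO}$, $\G e_1$ and $\G(e_1,e_2)$ are genuinely closed is correct (the characteristic-two observation that type-$(\mathrm{K}_1)$ octonions are traceless is the right key point) but is not required for the statement, which only claims the forward implication.
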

\begin{proof} 
We need to show that any basis $\{a_1,\ldots,a_n\}$ from Proposition~\ref{prop_subalg}, different from the above bases, generates non-closed equivalence class. For each $\un{a}=(a_1,\ldots,a_n)$ the arguments are the same: we find an element $\un{a}'$ in the closure of $\G\un{a}$ such that $\rk(\un{a}') < \rk(\un{a})$, that obviously implies that $\G\un{a}$ is not closed. 
\begin{enumerate}
\item[$\bullet$] If $\un{a}=(\uu_1)\in\OO^1$, then for the standard one-parameter subgroup $\theta_{\un{\la}}$ with $\un{\la}=(1,-1,0)$ the element $\un{a}'=\lim_{t\to 0} \theta_{\un{\la}}(t)\un{a}=(0)$ lies in $\overline{\G\un{a}}$ (see Section~\ref{subsection_1gr} for more details).

\item[$\bullet$] If $\un{a}=(1_{\OO},\uu_1)$, then  $\un{a}'=\lim_{t\to 0} \theta_{(1,-1,0)}(t)\un{a}=(1_{\OO},0)$ lies in $\overline{\G\un{a}}$. 

\item[$\bullet$] If $\un{a}=(\uu_1,\vv_2)$, then  $\un{a}'=\lim_{t\to 0} \theta_{(1,-1,0)}(t)\un{a}=(0,0)$ lies in $\overline{\G\un{a}}$. 

\item[$\bullet$] If $\un{a}=(e_1,\uu_1)$, then  $\un{a}'=\lim_{t\to 0} \theta_{(1,-1,0)}(t)\un{a}=(e_1,0)$ lies in $\overline{\G\un{a}}$. 

\item[$\bullet$] If $\un{a}=(e_1,\vv_1)$, then  $\un{a}'=\lim_{t\to 0} \theta_{(-1,1,0)}(t)\un{a}=(e_1,0)$ lies in $\overline{\G\un{a}}$.

\item[$\bullet$] If $\un{a}=(1_{\OO},\uu_1,\vv_2)$, then  $\un{a}'=\lim_{t\to 0} \theta_{(1,-1,0)}(t)\un{a}=(1_{\OO},0,0)$ lies in $\overline{\G\un{a}}$. 

\item[$\bullet$] If $\un{a}=(e_1,e_2,\uu_1)$, then  $\un{a}'=\lim_{t\to 0} \theta_{(1,-1,0)}(t)\un{a}=(e_1,e_2,0)$ lies in $\overline{\G\un{a}}$. 

\item[$\bullet$] If $\un{a}=(e_1,\uu_1,\vv_2)$, then  $\un{a}'=\lim_{t\to 0} \theta_{(1,-1,0)}(t)\un{a}=(e_1,0,0)$ lies in $\overline{\G\un{a}}$. 

\item[$\bullet$] If $\un{a}=(\uu_1,\vv_2,\vv_3)$, then  $\un{a}'=\lim_{t\to 0} \theta_{(1,-1,0)}(t)\un{a}=(0,0,\vv_3)$ lies in  $\overline{\G\un{a}}$. 
\end{enumerate}
\end{proof}

\begin{theo}\label{theo_separ2} 
The set $\set{n}^{(8)}$ is a separating set for $\K{n}^{\G}$ in case $\Char{\FF}=2$. 
\end{theo}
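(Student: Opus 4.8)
The plan is to reduce, via \textbf{Proposition~\ref{prop_key}}, to the problem of separating two closed $\G$-orbits $\G\un{a}$ and $\G\un{b}$, where $a_1,\ldots,a_l$ is a basis of a subalgebra $\algA$, the octonions $b_1,\ldots,b_n$ linearly span a subalgebra $\algB$, and $\dim\algA\geq\dim\algB$. Since $\G\un{a}$ is closed, \textbf{Remark~\ref{remak_closed}} forces $[\algA]$ to be a closed equivalence class, so by \textbf{Corollary~\ref{cor_closed}} we may assume (after replacing $\un{a},\un{b}$ by $g\un{a},g\un{b}$ for a suitable $g\in\G$) that $\algA$ is one of $0$, $\FF 1_{\OO}$, $\FF e_1$, $\MM$, or $\OO$ itself — the cases $\dim\algA\in\{0,1,2\}$ being $\{0\}$, $\{1_{\OO}\}$, $\{e_1\}$, $\{e_1,e_2\}$, while $\dim\algA\geq3$ leaves (the closed classes among) the $\leq3$-dimensional list together with higher-dimensional subalgebras; the only closed ones up to dimension $8$ turn out to be $\FF1_{\OO}$, $\FF e_1$, $\MM\simeq M_2(\FF)$, $\SE$(?), and $\OO$. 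I would organize the proof as a case analysis on $\dim\algA$.

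First I would handle the degenerate and low-dimensional closed cases directly. If $\dim\algA=0$ then $\un{a}=0$, and all traces and norms of the $b_i$ vanish; using the Hilbert--Mumford argument (a standard one-parameter subgroup $\theta_{\un{\la}}$ as in \textbf{Corollary~\ref{cor_closed}}) one shows $0\in\overline{\G\un{b}}$, and since $\G\un{b}$ is closed, $\un{b}=0$, giving $\G\un{a}=\G\un{b}$. If $\dim\algA=1$, then $\algA=\FF1_{\OO}$ or $\FF e_1$ and $\un{a}=(a_1,0,\ldots,0)$ with $a_1\in\{1_{\OO},e_1\}$; since $\dim\algB\leq\dim\algA=1$, \textbf{Lemma~\ref{lemma_separ1}} gives $\G\un{a}=\G\un{b}$ directly, once we note that $\un{b}\in\MM^n$ may be assumed after a $\G$-move bringing $b_1$ into $\MM$ via \textbf{Proposition~\ref{prop_ThI}} and \textbf{Lemma~\ref{lemma_Grishkov}} — actually $\dim\algB\leq1$ so $\algB$ itself lies inside some quaternion subalgebra, hence inside a $\G$-translate of $\MM$. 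If $\dim\algA=2$, the only closed class is $\{e_1,e_2\}$, so $\un{a}=(e_1,e_2,0,\ldots,0)$; since $\dim\algB\leq2$ we likewise move $\algB$ into $\MM$ and apply \textbf{Lemma~\ref{lemma_separ2}}.

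For $\dim\algA\geq3$, since $\dim\algB\leq\dim\algA$ and separation by $\set{n}^{(8)}$ is quite strong, I would aim to show $\algA=\algB=\OO$ (the generic case) or, if $\algA$ is a proper closed subalgebra, reduce to the cases already handled. The key is: when $\dim\algA=8$, i.e.\ $\algA=\OO$, the $a_i$ span $\OO$, so among the first $l\leq8$ of them we can extract a basis, and by \textbf{Proposition~\ref{prop_subalg}}-type normal forms we may assume $\un{a}=(e_1,e_2,\uu_1,\vv_1,\uu_2,\vv_2,\uu_3,\vv_3,0,\ldots,0)$; then \textbf{Lemma~\ref{lemma_separ4}} (which only uses $\set{n}^{(3)}\subset\set{n}^{(8)}$) finishes. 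When $3\leq\dim\algA\leq6$ (proper, possibly non-closed a priori, but here closed), the closed subalgebras of $\OO$ of these dimensions should again reduce, via the $\G$-classification and one-parameter degenerations, to $\MM$ (dimension $4$) and its relatives; for $\algA=\MM$ one takes $\un{a}=(e_1,e_2,\uu_1,\vv_1,0,\ldots,0)$ and invokes \textbf{Lemma~\ref{lemma_separ3}}, after first using the $\widehat{\Psi}$/$\mathcal{F}$ machinery of Section~\ref{section_trace} and \textbf{Lemma~\ref{lemma_Grishkov}} to push $\algB$ into $\MM$ as well — this last push is where \textbf{Lemma~\ref{lemma_remark_1.3.1}} does the bookkeeping.

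The main obstacle I expect is the $\dim\algA\geq3$ step: one must be sure that the list of \emph{closed} equivalence classes of subalgebras of $\OO$ of every dimension $\leq8$ (not just $\leq3$, which is all \textbf{Corollary~\ref{cor_closed}} supplies) is small enough that each closed class admits a concrete basis to which one of \textbf{Lemmas~\ref{lemma_separ1}--\ref{lemma_separ4}} applies, and that the degree-$8$ bound is genuinely needed (it is, because $\dim\OO=8$, and the Gram-matrix non-degeneracy argument in \textbf{Lemma~\ref{lemma_separ4}} uses traces of products of up to three generic octonions evaluated on a full basis). Concretely, the subtle point is bridging \textbf{Corollary~\ref{cor_closed}} (dimension $\leq3$) to the intermediate dimensions $4,5,6,7$: one shows that any such $\algA$ either is $\G$-conjugate to $\MM$ or $\SE$ or degenerates (via a standard $\theta_{\un\la}$) to a lower-rank tuple, contradicting closedness unless $\algA\in\{\MM,\OO\}$, after which the quoted separating lemmas close the argument and hence $\G\un{a}=\G\un{b}$, so by \textbf{Remark~\ref{remark_closure}} the set $\set{n}^{(8)}$ separates $\un{a}$ from $\un{b}$ in general.
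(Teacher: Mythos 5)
Your overall architecture coincides with the paper's: reduce via Proposition~\ref{prop_key} to comparing closed orbits of tuples spanning subalgebras $\algA\supseteq$-dimensionally$\supseteq\algB$, then run a case analysis on $\dim\algA$ and invoke Lemmas~\ref{lemma_separ1}--\ref{lemma_separ4} together with Corollary~\ref{cor_closed}. The cases $\dim\algA=8$ and $\dim\algA\leq 3$ are handled exactly as in the paper.

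The genuine gap is the step you yourself flag as ``the main obstacle'': bridging from Corollary~\ref{cor_closed} (which only covers $d\leq 3$) to the dimensions $4,5,6,7$. You write that ``one shows that any such $\algA$ either is $\G$-conjugate to $\MM$ or $\SE$ or degenerates \ldots to a lower-rank tuple,'' but you give no mechanism for this, and you leave a ``(?)'' next to $\SE$. The paper closes this with two concrete inputs you are missing. First, Theorem 5 of Racine's paper: $\SE$ is the \emph{unique} maximal proper subalgebra of $\OO$ modulo $\G$, so any proper $\algA$ (and, independently conjugating $\un{b}$, any $\algB$) may be assumed to lie in $\SE$ --- no classification of subalgebras in dimensions $4$--$7$ is needed. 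Second, the explicit standard one-parameter subgroup $\theta_{(0,1,-1)}$ contracts $\SE$ onto $\MM$ as $t\to 0$; since $\G\un{a}$ is closed, the limit $\un{a}'=\lim_{t\to 0}\theta_{(0,1,-1)}(t)\un{a}$ lies in $\G\un{a}$ itself, so after replacing $\un{a}$ by $\un{a}'$ one has $\algA\subseteq\MM$, hence $\dim\algA\leq 4$. This simultaneously shows that dimensions $5,6,7$ cannot occur for closed orbits and that $\SE$ is \emph{not} a closed class, and it reduces $\dim\algA=4$ to $\algA=\MM$, where Lemma~\ref{lemma_separ3} applies. Your alternative suggestion of pushing $\algB$ into $\MM$ via the $\widehat{\Psi}$/$\mathcal{F}$ machinery and Lemma~\ref{lemma_remark_1.3.1} does not work as stated: those tools presuppose that the octonions already lie in $\MM$; it is the Racine-plus-degeneration argument that gets you there. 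Without these two inputs the case analysis is incomplete, so the proof as written does not go through.
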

\begin{proof} We will apply Proposition~\ref{prop_key} to obtain the required statement. Assume that $\G$-orbits of   $\un{a}=(a_1,\ldots,a_l,0,\ldots,0)\in\OO^n$,  $\un{b}\in\OO^n$ are  closed,  $a_1,\ldots,a_l$ is a basis of some subalgebra $\algA$ of $\OO$, octonions $b_1,\ldots,b_n$ linearly generate some subalgebra $\algB$ of $\OO$, and  $\dim\algA \geq \dim\algB$. Moreover,  we assume that   $\un{a}$ and  $\un{b}$ are not separated by $\set{n}^{(8)}$. 
 
Let $\dim\algA=8$. We may choose that  $\un{a}=(e_1,e_2,\uu_1,\vv_1,\uu_2,\vv_2,\uu_3,\vv_3,0,\ldots,0)$ by Lemma~\ref{lemma_separ_lin}.  Then Lemma~\ref{lemma_separ4} implies that $\G\,\un{a}=\G\, \un{b}$.

Let $\dim\algA<8$. Then $\algA$ lies in a maximal proper subalgebra of $\OO$. By Theorem 5 of~\cite{Racine_1974}, the algebra of sextonions  
$\SE$ is the unique maximal proper subalgebra of $\OO$ modulo $\G$-action (see also Remark~\ref{remark_Racine} below). So $\algA\subset \SE$, i.e., for all $1\leq i\leq l$ we have 
$$a_i=\matr{\al_{i1}}{(\al_{i2}, \al_{i3}, 0)}{(\al_{i4}, 0, \al_{i5})}{\al_{i6}}
$$
for some $\al_{ij}\in\FF$. 
Similarly, we can assume that $\algB\subset \SE$. 

Since for the standard one-parameter subgroup $\theta_{\un{\la}}$ with $\un{\la}=(0,1,-1)$ we have 
$$\theta_{\un{\la}}(t)a_i = \matr{\al_{i1}}{(\al_{i2}, t \al_{i3}, 0)}{(\al_{i4}, 0, t \al_{i5})}{\al_{i6}},
$$%
then the limit  $\un{a}'=\lim_{t\to 0} \theta_{\un{\la}}(t)\un{a}$ exists (see Section~\ref{subsection_1gr} for more details). Obviously, $\un{a}' =(a'_1,\ldots,a'_l,0,\ldots,0)$ lies in $\MM^n$. The orbit $\G\un{a}$ is closed, therefore, $\un{a}'\in\G\,\un{a}$. Replacing $\un{a}$ by $\un{a}'$ we may assume that $\un{a}\subset \MM^n$. Therefore, $\algA\subset \MM$. In the same manner we can assume that $\algB\subset \MM$.

In case $\dim\algA=4$ by Lemma~\ref{lemma_separ_lin} we may choose that $\un{a}=(e_1,e_2,\uu_1,\vv_1,0,\ldots,0)$  and Lemma~\ref{lemma_separ3} implies that $\G\,\un{a} = \G\, \un{b}$. 

Let $\dim\algA\leq3$. By Corollary~\ref{cor_closed} and Lemma~\ref{lemma_separ_lin} we can assume that $\un{a}$ is one of the next elements: $(1_{\OO})$, $(e_1)$, $(e_1,e_2)$. If $\un{a}=(1_{\OO})$ or $\un{a}=(e_1)$, then Lemma~\ref{lemma_separ1} implies that  $\G\,\un{a} = \G\, \un{b}$.  If $\un{a}=(e_1,e_2)$, then Lemma~\ref{lemma_separ2} implies that  $\G\,\un{a} = \G\, \un{b}$. 

Finally, by Proposition~\ref{prop_key} the set $\set{n}^{(8)}$ is separating for $\K{n}^{\G}$.
\end{proof}

\begin{remark}\label{remark_Racine} 
\tb{In the proof of Theorem 5 of~\cite{Racine_1974}, which claims that $\SE$ is the unique maximal proper subalgebra of $\OO$ modulo $\G$-action, there is a small error, but this does not interfere with the case of an algebraically closed field. See~\cite{Gagola_2013},~\cite{Petersson_2013} for more details.}
\end{remark}

\begin{remark}\label{remark_separ2} 
It follows from Theorem~\ref{theo_separ2} that the set $\setO{n}^{(8)}$ is a separating set for $\KO{n}^{\G}$ in case $\Char{\FF}=2$. 
\end{remark}

\section*{Acknowledgement} 

We are very grateful to the anonymous referees for helpful comments.


\bigskip

\bibliographystyle{siam}
\bibliography{literature}

\end{document}